\theoremstyle{plain}
\newtheorem{theorem}{Theorem}[section]
\newtheorem{lemma}[theorem]{Lemma}
\newtheorem{proposition}[theorem]{Proposition}
\theoremstyle{remark}
\newtheorem{remark}[theorem]{Remark}
\def\C{{\mathbb C}}
\def\R{{\mathbb R}}
\def\N{{\mathbb N}}
\def\({\left(}
\def\){\right)}
\def\<{\left\langle}
\def\>{\right\rangle}
\def\1{{\mathbf 1}}
\def\d{{\partial}}
\def\eps{\varepsilon}
\DeclareMathOperator{\RE}{Re}
\DeclareMathOperator{\IM}{Im}
\DeclareMathOperator{\diver}{div}
\numberwithin{equation}{section}
\date\today
\subjclass[2010]{35Q35, 35Q40, 35Q55, 76Y05, 81V10.}
\title[GWP for the non-linear Maxwell-Schr\"odinger system]{Global well-posedness for the non-linear Maxwell-Schr\"odinger system}
\author[P.~Antonelli]{Paolo Antonelli}
\address[P.~Antonelli]{Gran Sasso Science Institute \\ via Crispi 7 \\ 67100 L'Aquila (Italy).}
\email{paolo.antonelli@gssi.it}
\author[P.~Marcati]{Pierangelo Marcati}
\address[P.~Marcati]{Gran Sasso Science Institute \\ via Crispi 7 \\ 67100 L'Aquila (Italy).}
\email{pierangelo.marcati@gssi.it}
\author[R.~Scandone]{Raffaele Scandone}
\address[R.~Scandone]{Gran Sasso Science Institute \\ via Crispi 7 \\ 67100 L'Aquila (Italy).}
\email{raffaele.scandone@gssi.it}
\begin{document}

\begin{abstract}
In this paper we study the Cauchy problem associated to the Maxwell-Schr\"odinger system with a defocusing pure-power non-linearity. 
This system has many applications in physics, for instance in the description of a charged non-relativistic quantum plasma, interacting with its self-generated electromagnetic potential. 

One consequence of our analysis is to demonstrate that the Lorentz force associated with the electromagnetic field is well-defined for solutions slightly more regular than the finite energy class. This aspect is of fundamental importance since all the related physical models require the observability of electromagnetic effects. The well-posedness of the Lorentz force still seems to be a major open problem in the class of solutions which are only finite energy.

We show the global well-posedness at high regularity for the cubic and sub-cubic case, and we provide polynomial bounds for the growth of the Sobolev norm of the solutions, for a certain range of non-linearities. An important role is played by appropriate a priori dispersive estimates, obtained by means of Koch-Tzvetkov type bounds for the non-homogeneous Schr\"odinger equation, which overcome the lack of Strichartz estimates for the magnetic Schr\"odinger flow.

  Because of the power-type non-linearity, the propagation of higher regularity, globally in time, cannot be achieved via a bootstrap argument as done in \cite{Nakamura-Wada-MaxwSchr_CMP2007}. 
   Our approach then exploits the analysis of a modified energy functional, combined with the a priori bounds coming from the dispersive estimates obtained previously.
 

\end{abstract}
\maketitle

\section{Introduction}
In this paper we study the following non-linear Maxwell-Schr\"odinger system in the Coulomb gauge
\begin{equation}\label{eq:MS}
\begin{cases}
i\partial_t u = -\Delta_Au+\phi u+|u|^{\gamma-1}u\\
\square A=\mathbb{P}J
\end{cases}\quad t\in\R,x\in\R^3,
\end{equation}
in the unknown $(u,A):\R_t\times\R^3\to\C\times\R^3$, with initial conditions
$$(u(0),A(0),\partial_tA(0))=(u_0,A_0,A_1),\quad\diver A_0=\diver A_1=0,$$
where $\Delta_A:=(\nabla-iA)^2$ is the magnetic Laplacian, $\phi=\phi(u):=(-\Delta)^{-1}|u|^2$, $J=J(u,A):=2Im (\overline{u}(\nabla-iA)u)$, and $\mathbb{P}:=\mathbb{I}-\nabla\diver\Delta^{-1}$ is the Helmholtz-Leray projection onto divergence free vector fields.

Physically $u$ can be interpreted as the order parameter associated to a charged quantum plasma \cite{SE,SE3,Haas2,SE2}, interacting with its self-generated electromagnetic potential described by $(\phi,A)$. Moreover, $\rho:=|u|^2$ and $J(u,A)$ are, respectively, the charge and the electric current density. The power-type non-linearity is introduced in order to encode pressure effects \cite{SE3}, see also the discussion later in this introduction.
Formally, the charge $\mathcal{Q}(t):=\|\rho\|_{L^1}=\|u\|_{L_2}^2$, and the energy
\begin{equation}\label{def:energy}
\mathcal{E}(t):=\int_{\R^3}|(\nabla-iA)u|^2+\frac12\big(|\partial_t A|^2+|\nabla A|^2+|\nabla\phi|^2\big)+\frac{2}{\gamma+1}|u|^{\gamma+1}dx,
\end{equation}
are conserved by solutions to \eqref{eq:MS}.

The system \eqref{eq:MS} is strictly related to the classical Maxwell-Schr\"odinger system
\begin{equation}\label{clas_MS}
\begin{cases}
i\partial_t u=-\Delta_A u + \phi u\\
-\Delta\phi-\partial_t\diver A=\rho\\
\square A+\nabla(\partial_t\phi+\diver A)=J,
\end{cases}
\end{equation}
in the unknown $(u,\phi,A):\R_t\times\R^3\to\C\times\R\times\R^3$, which describes the dynamics of a charged non-relativistic quantum particle, subject to its self-generated (classical) electro-magnetic field \cite{Schi,Fey}. In particular, \eqref{clas_MS} provides a classical approximation to the quantum field equations for an electro-dynamical non-relativistic many-body system.

It is well-known that \eqref{clas_MS} is invariant under the gauge transformation
\begin{equation}\label{eq:gauge}
(u,\phi,A)\mapsto (e^{i\lambda}u,\phi-\partial_t\lambda,A+\nabla\lambda).
\end{equation}
In particular, in the Coulomb gauge, i.e.~$\diver A=0$, it takes the simple form
\begin{equation}\label{eq:MSC}
\begin{cases}
i\partial_t u = -\Delta_Au+\phi u\\
\square A=\mathbb{P}J,
\end{cases}
\end{equation}
where $\phi$ is explicitly given by $(-\Delta)^{-1}|u|^2$. 

The Cauchy problem associated with the Maxwell-Schr\"odinger system \eqref{clas_MS} has been widely studied in the mathematical literature, under various choices of the gauge. Among the first treatments we mention \cite{Nakamitsu-Tsutsumi-JMP,Tsutsumi}, where the authors studied the local and global well-posedness in high regularity spaces by means of the Lorenz gauge. The global existence of finite energy weak solutions has been investigated in \cite{Guo-Nakamitsu-Strauss}, by using a vanishing viscosity approach. The asymptotic behavior and the long-range scattering of solutions to \eqref{clas_MS} has been studied in \cite{Ginibre-Velo_S1,Ginibre-Velo_S2,Shimomura} (see also the references therein).

In \cite{Nakamura-Wada-Local,Nakamura-Wada-MaxwSchr_CMP2007}, using the evolution semigroup associated to the magnetic Laplacian, the authors obtained global well-posedness at high Sobolev regularity by means of a fixed point argument and suitable a priori estimates. The well-posedness at low regularity, and in particular in the energy space, can not be easily handled with these techniques, due to the difficulty to construct the linear magnetic propagator. The question has been recently solved in \cite{Bejenaru-Tataru}, by using the analysis of a short time wave packet parametrix for the magnetic Schr\"odinger equation and the related linear, bilinear, and trilinear estimates. 
On the other hand, at present it is still not clear whether the finite energy framework provides the sufficient regularity needed in order to define the Lorentz force. This aspect has its relevance since all related physical models require the observability of electromagnetic effects \cite{Fey}. This issue is straightforwardly overcome in the higher regularity framework, as for instance in \cite{Nakamura-Wada-Local, Nakamura-Wada-MaxwSchr_CMP2007}, and it turns out to be solved also for solutions slightly more regular than finite energy, see Proposition \ref{orc} below.
\newline
Let us remark that the Maxwell-Schr\"odinger system \eqref{clas_MS}, even when augmented with a power type nonlinearity as in \eqref{eq:MS}, retains its invariance with respect to gauge transformations. Our choice of studying problem \eqref{eq:MS} in the Coulomb gauge is only dictated by the aim of providing a simpler presentation. Indeed, by using a similar strategy the problem could be studied also in the Lorenz gauge, for instance, see Section \ref{sec:lor} for more details. On the other hand, it is possible to translate part of the results, studied in the Coulomb gauge, also to other gauges by using suitable transformations, see e.g. the discussion in \cite[Section 5]{Guo-Nakamitsu-Strauss}.
%
%

As already mentioned, the study of the non-linear Maxwell-Schr\"odinger system is motivated by the physical applications of this model. Indeed, system \eqref{eq:MS} arises in the description of dense astrophysical plasmas exhibiting quantum effects \cite{Haas,Haas2,SE,SE3,SE2}. More precisely, by means of the Madelung transform \cite{Mad} and by identifying $\rho$ and $J$ with the hydrodynamical momenta associated to $u$, it is possible to draw an analogy between \eqref{eq:MS} and a hydrodynamical system describing a compressible, inviscid, charged, quantum fluid modeling a quantum plasma \cite{SE3}. By exploiting this analogy, the nonlinear term in \eqref{eq:MS} then corresponds to the so called electron degeneracy pressure in the fluid dynamic description. The interested reader could refer to Section III in \cite{SE3} for more details about the physical modeling and to \cite{Antonelli-Marcati, AHMZ, AM_Brix} and the references therein for the rigorous setting of the Madelung transform.

From the mathematical point of view, the power-type non-linearity in \eqref{eq:MS} introduces further difficulties. The lack of suitable space-time estimates for the classical Maxwell-Schr\"odinger system prevents the study of \eqref{eq:MS} as a perturbation of \eqref{eq:MSC}. For instance, the analysis in \cite{Bejenaru-Tataru} cannot be straightforwardly adapted to the non-linear case. In order to deal with the power-type non-linearity at low-regularity regimes, one would need some kind of global smoothing properties for the magnetic-Schr\"odinger flow, such as Strichartz estimates. Although magnetic Strichartz estimates are well understood for time independent potentials \cite{DAncona-Fanelli-StriSmoothMagn-2008,DAFVV-2010,EGSchlag-2008,EGSchlag-2009, FV} (see also \cite{Michelangeli} and references therein), in the time dependent case much less is known, and the only results available require the smallness of suitable scale invariant space-time norms \cite{Georgiev-Stefanov-Tarulli-2007,Stefanov-MagnStrich-2007}. In particular, even for a non-linear Schr\"odinger equation with a given external time dependent magnetic potential, the well-posedness in the energy space is in general an open question (see \cite[Chapter 4]{Andreas} for relevant advances in this direction, and the paper \cite{AMS-2017-globalFinErgNLS} for the existence of finite energy weak solutions). Concerning the Maxwell-Schr\"odinger system with focusing non-linearities, one can also study the existence and stability of standing waves, see for instance \cite{CW} and references therein.

A convenient regularity framework for \eqref{eq:MS} is given by \cite{Nakamura-Wada-Local,Nakamura-Wada-MaxwSchr_CMP2007} (see also \cite[Chapter 2]{Andreas}), where the authors determine the sufficient regularity in order to construct the evolution semigroup associated to the magnetic Laplacian. On the other hand, in this framework it is not possible to use standard arguments such as the conservation of charge and energy in order to extend the solution globally in time, see for example \cite{Antonelli-Damico-Marcati,CW2020} where the estimates inferred are not sufficient to control the non-linearity globally in time.

To overcome those difficulties here we combine two main ingredients. First of all, we derive suitable a priori estimates (encoded in Propositions \ref{th:apriori} and \ref{th:perdi}) for weak solutions to the non-linear Maxwell-Schr\"odinger system \eqref{eq:MS}, in the same spirit as in \cite{Nakamura-Wada-MaxwSchr_CMP2007}. The relevant tools are the Strichartz estimates for the Klein-Gordon equation, and the smoothing-Strichartz estimates for the inhomogeneous Schr\"odinger equation, which allow us to deal with the derivative term in the expansion of the magnetic Laplacian. The a priori estimates, combined with the conservation of charge and energy, imply a non-trivial gain of spatial integrability, see Lemma \ref{ove_pi}. 
The second ingredient is a classical argument involving modified energies (see for instance the papers \cite{Tsu,Raphael-S,Ozawa-Visciglia}), which improves on the standard energy method for $\gamma>2$. Combining this argument with the a priori estimates allows us to deduce global well-posedness at the $H^2$-regularity level for the order parameter, in the regime $\gamma\in(1,3]$.  Moreover, we obtain a polynomial bound for the growth of the Sobolev norms of the solutions when $\gamma\in(2,3)$, and an exponential bound in the cubic case $\gamma=3$ (see the recent paper \cite{P-T-V} for an analogous result in the case of NLS on compact manifolds).


Let us state our main result. For $s,\sigma\in\R$ we set 
$$\Sigma^{\sigma}:=\{(A_0,A_1)\in H^{\sigma}(\R^3,\R^3)\times H^{\sigma-1}(\R^3,\R^3)\,|\,\diver{A_0}=\diver A_1=0\},$$
and $M^{s,\sigma}:=H^{s}(\R^3,\C)\times \Sigma^{\sigma}$. We have the following theorem.

\begin{theorem}\label{th:main}
Fix $\gamma\in(1,3]$, $\sigma\in[\frac43,3)$. The Cauchy problem associated with \eqref{eq:MS} is globally well-posed in $M^{2,\sigma}$. Namely, for any initial data $(u_0,A_0,A_1)\in M^{2,\sigma}$, there exists a unique solution $(u,A)$ to \eqref{eq:MS}, with $(u,A,\partial_t A)\in\mathcal{C}([0,+\infty),M^{2,\sigma})$. Moreover
\begin{itemize}
\item[(i)] There is continuous dependence on the initial data. Namely, for every $0<T<\infty$, the flow map $(u_0,A_0,A_1)\mapsto (u,A,\partial_t A)$ is continuous from $M^{2,\sigma}$ to $\mathcal{C}([0,T],M^{2,\sigma})$.
\item[(ii)] The charge and the energy are conserved, i.e.~$\mathcal{Q}(t)=\mathcal{Q}(0)$, $\mathcal{E}(t)=\mathcal{E}(0)$ for every $t>0$.
\end{itemize}
In addition, when $\gamma\in(2,3)$ there exists a positive constant $N$, independent on $\gamma$, such that
\begin{equation}\label{pol_bound}
\|(u,A,\partial_t A)\|_{L^{\infty}((0,T);M^{2,\sigma})}\lesssim (1+T)^{\frac{N}{3-\gamma}}.
\end{equation}
When $\gamma=3$ we have instead the exponential bound
\begin{equation}\label{exp_bound}
\|(u,A,\partial_t A)\|_{L^{\infty}((0,T);M^{2,\sigma})}\lesssim \exp(T^{N}),
\end{equation}
for some constant $N>0$.
\end{theorem}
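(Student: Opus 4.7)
The plan is to first build a local-in-time solution at the $M^{2,\sigma}$ level and then upgrade it to a global one through a modified-energy argument that feeds on the a priori dispersive estimates of Propositions \ref{th:apriori}, \ref{th:perdi} and on the gain of integrability of Lemma \ref{ove_pi}.

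For the local theory I would follow the semigroup construction of \cite{Nakamura-Wada-Local,Nakamura-Wada-MaxwSchr_CMP2007} to build the propagator of $-\Delta_A$ at $H^2$ regularity, the condition $\sigma\ge\tfrac43$ providing enough regularity on $A$ to make $-\Delta_A$ a time-continuous family of self-adjoint operators on $H^2$. Coupling this with the wave equation $\Box A=\mathbb{P}J$ solved through standard Klein-Gordon/wave-type estimates, a contraction in $C([0,T];M^{2,\sigma})$ closes for $T$ sufficiently small; the power nonlinearity $|u|^{\gamma-1}u$ is controlled at the $H^2$ level via Moser-type inequalities and Sobolev embedding, using $\gamma\le 3$. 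Uniqueness and continuous dependence in (i) follow from the same fixed-point scheme, and conservation of charge and energy (ii) is obtained at this regularity by pairing the Schr\"odinger equation with $\bar u$ and $\partial_t u$ respectively.

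The core of the proof is the a priori $H^2$ bound. A direct differentiation of \eqref{eq:MS} and pairing with $\Delta\bar u$ produces, beyond perturbative contributions already controlled by energy conservation and Propositions \ref{th:apriori}--\ref{th:perdi}, a term essentially of the form $\int|u|^{\gamma-1}|\Delta u|^2\,dx$ that, for $\gamma>2$, cannot be closed by Gr\"onwall from the available controls. Following \cite{Tsu,Raphael-S,Ozawa-Visciglia}, the remedy is to replace $\|\Delta u\|_{L^2}^2$ with a modified functional
\begin{equation*}
\widetilde{\mathcal{E}}(t):=\|\partial_t u(t)\|_{L^2}^2+\mathcal{R}(u(t),A(t),\partial_t A(t)),
\end{equation*}
which, through the identity $i\partial_t u=(-\Delta_A+\phi+|u|^{\gamma-1})u$, is equivalent to $\|u(t)\|_{H^2}^2$ up to lower-order quantities controlled by Lemma \ref{ove_pi}, with $\mathcal{R}$ a polynomial correction chosen so that the non-perturbative contribution in $\tfrac{d}{dt}\widetilde{\mathcal{E}}$ exactly cancels. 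The resulting differential inequality $\tfrac{d}{dt}\widetilde{\mathcal{E}}\le F(t)\widetilde{\mathcal{E}}$ with $F\in L^1_{\mathrm{loc}}$ rules out finite-time blow-up of the $M^{2,\sigma}$-norm and produces the global solution.

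For the quantitative bounds \eqref{pol_bound}--\eqref{exp_bound} I would track the dependence of $F$ on $T$: the smoothing--Strichartz norms entering Propositions \ref{th:apriori}, \ref{th:perdi} grow at most polynomially in $T$, and a careful bookkeeping of powers shows that the effective exponent in $F$ scales like $3-\gamma$, so that Gr\"onwall yields $(1+T)^{N/(3-\gamma)}$ for $\gamma\in(2,3)$ and degenerates to the exponential bound $\exp(T^N)$ at $\gamma=3$. The main obstacle is the construction of the correction $\mathcal{R}$: the non-perturbative term mixes the magnetic Laplacian, the self-consistent Poisson potential $\phi$ and the nonlinearity $|u|^{\gamma-1}u$, so $\mathcal{R}$ must be algebraically tuned to all three simultaneously, and one must verify that the endpoint $\sigma=\tfrac43$ suffices for both the magnetic semigroup theory and the bilinear estimates on $\mathbb{P}J$ driving the wave part.
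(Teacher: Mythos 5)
Your overall architecture (local theory via the magnetic propagator, a priori dispersive bounds, then a modified energy for $\gamma>2$) matches the paper's, but the step that produces the quantitative bounds \eqref{pol_bound}--\eqref{exp_bound} does not work as you describe it. You claim a differential inequality $\tfrac{d}{dt}\widetilde{\mathcal{E}}\le F(t)\widetilde{\mathcal{E}}$ with $F\in L^1_{\mathrm{loc}}$ and then invoke Gr\"onwall; but Gr\"onwall gives $\widetilde{\mathcal{E}}(T)\le\widetilde{\mathcal{E}}(0)\exp\big(\int_0^TF\big)$, which is exponential in $T$ whenever $F$ is merely polynomially bounded -- it can never produce the polynomial bound $(1+T)^{N/(3-\gamma)}$. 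The paper's mechanism is different and essentially algebraic: the correction terms in $\mathcal{E}_2$ do \emph{not} cancel the dangerous contributions exactly; after integrating \eqref{eq:deri_modi} in time and using the dispersive gain $u\in L_T^2W^{1/2,6}$ together with an interpolation that exploits $\gamma<3$, one arrives at
$\|u\|_{L_T^{\infty}H_A^2}^2-\|u(0)\|_{H_{A(0)}^2}^2\lesssim_{\langle T\rangle^n}\langle\|u\|_{L_T^{\infty}H_A^2}\rangle^{2-\eps(\gamma)}$
with $\eps(\gamma)=\frac{3-\gamma}{4}>0$ (see \eqref{eq:funda_S}). Since the right-hand side is a strictly subquadratic power of the quantity being squared on the left, it can be absorbed, giving $\|u\|_{L_T^{\infty}H_A^2}\lesssim\langle T\rangle^{m/\eps(\gamma)}$; this is where the exponent $N/(3-\gamma)$ comes from, and it is not a Gr\"onwall argument.

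At $\gamma=3$ the exponent degenerates ($\eps(3)=0$) and the quadratic term $\|u\|_{L^2_TW^{1/2,6}}\langle\|u\|_{L_T^{\infty}H_A^2}\rangle^{2}$ can no longer be absorbed globally; your proposal offers no substitute here. The paper's fix is to partition $[0,T]$ into $k\lesssim\langle T\rangle^m$ subintervals on each of which $\|u\|_{L^2((t_j,t_{j+1});W^{1/2,6})}$ is small enough (possible because $\|u\|^2_{L^2_TW^{1/2,6}}$ is globally bounded polynomially in $T$ by \eqref{uniform_bound}) that the quadratic term is absorbed interval by interval; iterating the resulting bound $k$ times yields $\exp(T^N)$. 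Two further points you should address: the modified energy is only weakly differentiable for $\gamma>2$, so the range $\gamma\in(1,2]$ requires a separate argument (the paper uses the standard energy method combined with the Brezis--Gallouet--Wainger inequality \eqref{brez} and a Bihari--LaSalle iteration, giving a double-exponential bound); and the claim that $\mathcal{R}$ can be ``algebraically tuned'' so that the non-perturbative contribution ``exactly cancels'' is too optimistic -- residual terms such as $\int|u|^{\gamma-2}\partial_t|u|\,|\nabla_Au|^2$ survive and must be estimated using the a priori smoothing bounds, which is precisely what generates the power $2-\eps(\gamma)$ above.
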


Theorem \ref{th:main} improves on the results in \cite{Antonelli-Damico-Marcati} and \cite{CW2020}, where the authors proved local well-posedness at high-regularity regimes. Moreover, it is worth noticing that a similar global-in-time result can be also obtained in the focusing case, for the mass sub-critical regime $\gamma\in(1,\frac73)$. For the sake of concreteness, here we focus on the defocusing case only.

The paper is organized as follows. In Section 2, we collect the main tools we use throughout the paper, in particular the Strichartz estimates for the Klein-Gordon equation, and the smoothing-Strichartz estimates for the inhomogeneous Schr\"odinger equation. Section 3 is devoted to the proof of the a priori estimates for weak solutions to \eqref{eq:MS}, which are the key tool for the globalization argument. Moreover, they allow to show that the Lorentz force is well defined for weak solutions which are slightly more regular than just finite energy, see Proposition \ref{orc} and the subsequent remark. In Section 4, owing to the theory on the linear magnetic-Schr\"odinger propagator, we prove, by means of a contraction argument, local well-posedness for \eqref{eq:MS}, for every $\gamma>1$ (Theorem \ref{th:LWP}). In Section 5, we show the global well-posedness in the cubic and sub-cubic case. For $\gamma\in(1,2]$, the proof is based on a standard energy method, combined with the Brezis-Gallouet-Wainger inequality and a Gr\"onwall-type argument. For $\gamma\in(2,3]$, we exploit the properties of the modified energy, which allow us to obtain also the polynomial bound \eqref{pol_bound} in the sub-cubic case, and the exponential bound \eqref{exp_bound} in the cubic case.

\section*{Acknowledgments}
The authors are grateful to Andreas Geyer-Schulz for some useful discussions. The authors acknowledge partial support by INdAM-GNAMPA through the project ``Esistenza, limiti singolari e comportamento asintotico per equazioni Eulero/Navier-Stokes-Korteweg”.

\section{Notation and preliminaries}
In this Section we collect some preliminary results we are going to use throughout the paper. We begin with a few remarks on our notation. 

We often write $L^p$ (resp.~$W^{s,p}$) to denote the Lebesgue space $L^p(\R^3)$ (resp.~the Sobolev space $W^{s,p}(\R^3)$). As usual, $H^s$  denotes the space $W^{s,2}$. For any interval $I\subseteq\R$ and any Banach space $\mathcal{X}$, we denote by $L^p(I,\mathcal{X})$ (resp.~$W^{s,p}(I,\mathcal{X})$) the space of $\mathcal{X}$-valued Bochner measurable function on $I$, whose $\mathcal{X}$-norm belongs to $L^p(I)$ (resp.~$W^{s,p}(I)$). These spaces will be often abbreviated to $L_T^p\mathcal{X}$ and $W_T^{s,p}\mathcal{X}$ when $I=[0,T]$. Given $p\geq 1$, we denote by $p'$ its dual exponent. As customary, we set $\langle \lambda\rangle:=\sqrt{1+\lambda^2}$ for $\lambda\in\R$. Given two positive quantities $A,B$, we write $A\lesssim B$ if there exists a constant $C>0$ such that $A\leq CB$; if $A$ and $B$ depend on a positive parameter $T$, we write $A\lesssim_{\langle T^n\rangle}B$ if $A\lesssim \langle T\rangle^n B$ for some positive constant $n$. We denote by $(\cdot,\cdot)$ the standard scalar product on $L^2$. For a given vector field $A:\R^3\to\R^3$, we define the magnetic gradient $\nabla_A:=(\nabla-iA)$. Given $s\in\R$, we write $\mathcal{D}^s:=(1-\Delta)^{s/2}$ for the Bessel operator of order $s$ (we just write $\mathcal{D}$ when $s=1$). When not specified otherwise, $m$ denotes a positive integer constant, which may change at each occurrence.

We recall the generalized fractional Leibniz rule \cite{Gulisashvili-Kon-1996}.

\begin{lemma}
Let  $s,\,\alpha,\,\beta\in[0,\infty)$, $p\in(1,\infty)$, and let $p_1,\,p_2,\,q_1,\,q_2\in(1,\infty]$ be such that $\frac{1}{p_i}+\frac{1}{q_i}=\frac1p$, $i=1,2$. Then
\begin{equation}\label{frac_leibniz}
\|\mathcal{D}^s(fg)\|_{L^p}\lesssim \|\mathcal{D}^{s+\alpha}f\|_{L^{p_1}}\|\mathcal{D}^{-\alpha}g\|_{L^{q_1}}+\|\mathcal{D}^{-\beta}f\|_{L^{p_2}} \|\mathcal{D}^{s+\beta}g\|_{L^{q_2}}.
\end{equation}
\end{lemma}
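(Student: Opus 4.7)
The plan is to establish \eqref{frac_leibniz} via a Littlewood-Paley (paraproduct) decomposition, which is the standard mechanism for controlling Bessel-potential multipliers $\mathcal{D}^s=(1-\Delta)^{s/2}$ applied to products. I would fix a dyadic partition of unity $1=\sum_{j\ge 0}\varphi_j(\xi)$ with $\varphi_j$ supported in $|\xi|\sim 2^j$ for $j\ge 1$, set $f_j:=\varphi_j(D)f$, $g_k:=\varphi_k(D)g$, and split
\[
fg=\underbrace{\sum_{j<k-2}f_j g_k}_{\Pi_{\mathrm{lh}}}+\underbrace{\sum_{k<j-2}f_j g_k}_{\Pi_{\mathrm{hl}}}+\underbrace{\sum_{|j-k|\le 2}f_j g_k}_{\Pi_{\mathrm{hh}}}.
\]

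First I would treat the low-high piece $\Pi_{\mathrm{lh}}$. On the spectral support of $f_j g_k$ with $j\ll k$, the Bessel symbol obeys $\langle\xi+\eta\rangle^s\lesssim \langle\eta\rangle^{s+\beta}\langle\xi\rangle^{-\beta}$, so $\mathcal{D}^s$ essentially distributes $s+\beta$ derivatives onto $g$ and $-\beta$ onto $f$. Combining the Fefferman-Stein vector-valued maximal inequality, the square-function characterization of $L^{p_2}$ and $L^{q_2}$, and H\"older's inequality with $\tfrac1{p_2}+\tfrac1{q_2}=\tfrac1p$, one arrives at the second summand on the right-hand side of \eqref{frac_leibniz}. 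The high-low piece $\Pi_{\mathrm{hl}}$ is treated symmetrically, using $\langle\xi+\eta\rangle^s\lesssim\langle\xi\rangle^{s+\alpha}\langle\eta\rangle^{-\alpha}$, and yields the first summand with the pair $(p_1,q_1)$.

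The main obstacle is the diagonal piece $\Pi_{\mathrm{hh}}$: the output frequency is not constrained by the input frequencies and $\mathcal{D}^s$ cannot simply be moved onto a single factor. Here I would apply an additional Littlewood-Paley projection in the output variable, writing $\mathcal{D}^s\Pi_{\mathrm{hh}}=\sum_\ell\Delta_\ell\mathcal{D}^s\Pi_{\mathrm{hh}}$ and observing that $\Delta_\ell(f_jg_k)$ is non-trivial only for $\ell\lesssim j\sim k$. For $s\ge 0$ the factor $2^{\ell s}$ is then dominated by $2^{js}$ and can be absorbed into the high-frequency norm on either $f$ or $g$, after which an almost-orthogonality argument allows me to recover either of the two terms on the right-hand side. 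The hypothesis $p\in(1,\infty)$ is essential at every step in order to apply the maximal inequality and the boundedness of Littlewood-Paley projections; the endpoint $q_i=\infty$ is the most delicate case, since the square-function characterization fails there and one must substitute a Bernstein inequality together with a $\mathrm{BMO}$-type bound on the low-frequency factor to close the estimate.
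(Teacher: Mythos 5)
The paper never proves this lemma: it is imported verbatim from Gulisashvili--Kon \cite{Gulisashvili-Kon-1996}, so there is no internal argument to compare against, and your task is really to reproduce the proof from that literature. Your Littlewood--Paley/paraproduct sketch is exactly the standard route, and the treatment of the two off-diagonal pieces is correct in outline: on the support of $\widehat{f_j}\ast\widehat{g_k}$ with $j\ll k$ one has $\langle\xi+\eta\rangle^{s}\lesssim\langle\eta\rangle^{s+\beta}\langle\xi\rangle^{-\beta}$, and the Fefferman--Stein maximal inequality plus the square-function characterization (valid precisely because $p\in(1,\infty)$) converts this into the second summand, with the symmetric argument giving the first. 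Two places in your outline are thin enough to flag. First, your handling of the diagonal piece $\Pi_{\mathrm{hh}}$ relies on summing $2^{\ell s}$ over output frequencies $\ell\lesssim j$ and dominating by $2^{js}$; that geometric summation requires $s>0$, whereas the hypothesis allows $s=0$, where the sum over $\ell$ costs a factor of $j$ and the argument as written loses a logarithm. The standard fix is to treat the diagonal piece without the output projection: write $\sum_{j}f_jg_j=\sum_j\big(2^{-j\beta}\mathcal{D}^{s}\text{-weighted }f_j\big)\big(2^{j(s+\beta)}\text{-weighted }g_j\big)$, apply Cauchy--Schwarz in $j$ pointwise, and then H\"older together with the square-function characterization of $\|\mathcal{D}^{-\beta}f\|_{L^{p_2}}$ and $\|\mathcal{D}^{s+\beta}g\|_{L^{q_2}}$; this closes the diagonal term for all $s\ge0$ (for $s>0$ your version also works after the almost-orthogonality step you mention). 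Second, the endpoint $q_i=\infty$ is indeed the delicate case; the clean way to handle it here is to observe that the factor carrying the negative-order norm only ever appears through the uniform bounds $\|g_k\|_{L^{\infty}}\lesssim 2^{k\alpha}\|\mathcal{D}^{-\alpha}g\|_{L^{\infty}}$ (a multiplier/Bernstein estimate), so that an $\ell^{\infty}$ bound on that factor suffices and no square-function characterization of $L^{\infty}$ is needed; your reference to a BMO substitute is more machinery than is required given the slack afforded by $\alpha,\beta\ge0$. With those two repairs your argument is a complete and correct proof of the lemma.
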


We shall also use the following estimate, which can be deduced by the Kato-Ponce commutator estimates \cite{Kato-Ponce_CommEst-1988} and the observation that $\mathbb{P}\nabla=0$ (see \cite{Nakamura-Wada-Local} for details).

\begin{lemma}\label{le:tre}
Let $s\geq 0$, and let $p,p_1,p_2\in(1,\infty)$, $q_1,q_2\in(1,\infty]$ be such that $\frac{1}{p_i}+\frac{1}{q_i}=\frac1p$, $i=1,2$. Then
$$\|\mathbb{P}(\bar{f}_1\nabla f_2)\|_{W^{s,p}}\lesssim\|f_1\|_{W^{s,p_1}}\|\nabla f_2\|_{L^{q_1}}+\|\nabla f_1\|_{L^{q_2}}\|f_2\|_{W^{s,p_2}}.$$
\end{lemma}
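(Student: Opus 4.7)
The plan rests on two elementary facts about the Helmholtz-Leray projector $\mathbb{P}=\mathbb{I}-\nabla\diver\Delta^{-1}$: as a matrix of Riesz-type Fourier multipliers it commutes with $\mathcal{D}^s=(1-\Delta)^{s/2}$ and is bounded on $L^p(\R^3)$ for $1<p<\infty$; and it annihilates gradients, $\mathbb{P}\nabla=0$, since gradient fields are curl-free. The case $s=0$ reduces directly to H\"older's inequality (applied to either $\bar f_1\nabla f_2$ or, after exploiting $\mathbb{P}(\bar f_1\nabla f_2)=-\mathbb{P}(f_2\nabla\bar f_1)$, to the alternative representation), so I focus on the main case $s>0$.

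I would then perform the standard Kato-Ponce splitting,
\begin{equation*}
\mathcal{D}^s(\bar f_1\nabla f_2)=[\mathcal{D}^s,\bar f_1]\nabla f_2+\bar f_1\,\nabla\mathcal{D}^s f_2,
\end{equation*}
and exploit $\mathbb{P}\nabla=0$ to move the extra derivative on the second summand off of $f_2$: writing $\bar f_1\nabla\mathcal{D}^s f_2=\nabla(\bar f_1\mathcal{D}^s f_2)-(\nabla\bar f_1)\mathcal{D}^s f_2$ and applying $\mathbb{P}$ annihilates the pure gradient, leaving
\begin{equation*}
\mathcal{D}^s\mathbb{P}(\bar f_1\nabla f_2)=\mathbb{P}\bigl([\mathcal{D}^s,\bar f_1]\nabla f_2\bigr)-\mathbb{P}\bigl((\nabla\bar f_1)\mathcal{D}^s f_2\bigr).
\end{equation*}
The $L^p$-boundedness of $\mathbb{P}$ then reduces matters to controlling the $L^p$ norm of each summand on the right.

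For the commutator I would invoke the classical Kato-Ponce commutator estimate
\begin{equation*}
\|[\mathcal{D}^s,\bar f_1]\nabla f_2\|_{L^p}\lesssim\|\mathcal{D}^s f_1\|_{L^{p_1}}\|\nabla f_2\|_{L^{q_1}}+\|\nabla f_1\|_{L^{q_2}}\|\mathcal{D}^{s-1}\nabla f_2\|_{L^{p_2}},
\end{equation*}
and observe that the Fourier symbol $i\xi/(1+|\xi|^2)^{1/2}$ of $\mathcal{D}^{-1}\nabla$ satisfies Mihlin's conditions, so that $\|\mathcal{D}^{s-1}\nabla f_2\|_{L^{p_2}}\lesssim\|\mathcal{D}^s f_2\|_{L^{p_2}}\sim\|f_2\|_{W^{s,p_2}}$. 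The remaining term is handled by plain H\"older with $1/p=1/q_2+1/p_2$, giving $\|(\nabla\bar f_1)\mathcal{D}^s f_2\|_{L^p}\le\|\nabla f_1\|_{L^{q_2}}\|f_2\|_{W^{s,p_2}}$. Summing produces the claimed bound.

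The only quasi-technical step is the Mihlin reduction from $\mathcal{D}^{s-1}\nabla$ to $\mathcal{D}^s$; the bulk of the argument is a direct unpacking of Kato-Ponce together with the algebraic identity $\mathbb{P}\nabla=0$, which is precisely what prevents a naive application of the fractional Leibniz rule \eqref{frac_leibniz} from losing a full extra derivative on the factor $f_2$.
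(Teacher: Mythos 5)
Your argument is correct and is precisely the route the paper intends: the paper gives no detailed proof, stating only that the lemma ``can be deduced by the Kato-Ponce commutator estimates and the observation that $\mathbb{P}\nabla=0$'' (with details referred to \cite{Nakamura-Wada-Local}), and your commutator splitting plus the cancellation $\mathbb{P}\nabla(\bar f_1\mathcal{D}^sf_2)=0$ is exactly that argument written out. The Mihlin reduction of $\mathcal{D}^{s-1}\nabla$ to $\mathcal{D}^{s}$ and the H\"older step are standard, so nothing essential is missing.
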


Let us recall the Brezis-Gallouet-Wainger inequality \cite{Bre-G,Bre-W}.

\begin{lemma}\label{le:cinque}
Let $p,q\in(1,\infty)$ and $\alpha>0$. We have the estimate
\begin{equation}\label{brez}
\|f\|_{L^{\infty}}\lesssim 1+\|f\|_{W^{3/p,p}}\ln^{(p-1)/p}(e+\|f\|_{W^{3/q+\alpha,q}}).
\end{equation}
\end{lemma}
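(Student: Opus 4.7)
My approach is a dyadic Littlewood--Paley decomposition $f=S_0f+\sum_{j\ge 0}\Delta_j f$ cut at some threshold $2^N$, with $N$ to be optimized at the end. The intuition is that $W^{3/p,p}(\R^3)$ misses embedding into $L^\infty$ only logarithmically, so the low-frequency sum $\sum_{j\le N}\Delta_j f$ can be controlled by the critical norm up to a logarithmic factor in $N$, while the high-frequency tail $\sum_{j>N}\Delta_j f$ decays geometrically thanks to the $\alpha$ of extra regularity encoded in $W^{3/q+\alpha,q}$.

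Quantitatively, Bernstein's inequality on $\R^3$ gives, for every $j\ge 0$, the two estimates
$$\|\Delta_j f\|_{L^\infty}\lesssim 2^{3j/p}\|\Delta_j f\|_{L^p}\lesssim \|\Delta_j f\|_{W^{3/p,p}},\qquad \|\Delta_j f\|_{L^\infty}\lesssim 2^{-\alpha j}\|\Delta_j f\|_{W^{3/q+\alpha,q}}.$$
The high-frequency tail is then a geometric series bounded by $2^{-\alpha N}\|f\|_{W^{3/q+\alpha,q}}$. For the low frequencies, Hölder's inequality on the discrete index $j$ yields
$$\sum_{j=0}^N \|\Delta_j f\|_{L^\infty}\le N^{1/p'}\Bigl(\sum_{j=0}^N \|\Delta_j f\|_{W^{3/p,p}}^p\Bigr)^{1/p}\lesssim N^{(p-1)/p}\,\|f\|_{W^{3/p,p}},$$
the last bound being the standard dyadic summation for Bessel potential spaces. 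I then choose $N\simeq \alpha^{-1}\log_2(e+\|f\|_{W^{3/q+\alpha,q}})$ so that the high-frequency contribution becomes $O(1)$: the low-frequency estimate produces exactly the advertised factor $\ln^{(p-1)/p}(e+\|f\|_{W^{3/q+\alpha,q}})\,\|f\|_{W^{3/p,p}}$, and the residual block $S_0 f$, bounded by $\|f\|_{L^p}\lesssim \|f\|_{W^{3/p,p}}$, is absorbed into the additive constant in \eqref{brez}.

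The main technical subtlety I foresee is the Littlewood--Paley summation step $\sum_j \|\Delta_j f\|_{W^{3/p,p}}^p\lesssim \|f\|_{W^{3/p,p}}^p$: for $p=2$ it is Parseval, for $p\ge 2$ it follows from the embedding $W^{s,p}\hookrightarrow B^s_{p,p}$, while for $p<2$ one must invoke the square function characterization of Triebel--Lizorkin spaces $F^{3/p}_{p,2}=W^{3/p,p}$ together with Minkowski's inequality applied on the dyadic counting measure. The key structural observation is that the exponent $(p-1)/p$ appearing in \eqref{brez} is precisely the Hölder dual of $p$ entering the counting step on the $N$ retained dyadic blocks; any cleaner route to the inequality must pay this same cost at some stage.
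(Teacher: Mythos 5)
The paper does not actually prove this lemma: it is quoted from Brezis--Gallouet and Brezis--Wainger, whose argument is potential-theoretic (write $f=G_{3/p}\ast g$ with $g=(1-\Delta)^{3/(2p)}f\in L^p$, and split the Bessel kernel $G_{3/p}(y)\approx |y|^{-3/p'}$ into the annulus $\delta<|y|<1$, whose $L^{p'}$ mass is $\sim\log^{1/p'}(1/\delta)$, plus a near/far remainder handled with the $W^{3/q+\alpha,q}$ norm). Your Littlewood--Paley argument is a genuinely different and perfectly standard route, and it is correct in the range $p\ge 2$ --- which happens to be the only range the paper ever invokes (it uses $p=6$ in \eqref{gr_bg}).

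For $p<2$, however, your key summation step fails and the fix you sketch does not repair it. The inequality $\sum_j\|\Delta_jf\|_{W^{3/p,p}}^p\lesssim\|f\|_{W^{3/p,p}}^p$ is precisely the embedding $W^{3/p,p}=F^{3/p}_{p,2}\hookrightarrow B^{3/p}_{p,p}$, and $F^s_{p,2}\hookrightarrow B^s_{p,r}$ holds exactly for $r\ge\max(p,2)$; when $p<2$ the target index $r=p$ lies below this threshold. Minkowski's inequality points the wrong way here: for $p\le 2$ one has $\|\{a_j\}\|_{\ell^2}\le\|\{a_j\}\|_{\ell^p}$ pointwise, hence $\|f\|_{L^p\ell^2}\le\|f\|_{\ell^pL^p}$, which is the reverse of the bound you need. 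What the dyadic route does give for $p<2$ is $F^{3/p}_{p,2}\hookrightarrow B^{3/p}_{p,2}$, and H\"older on $2N$ blocks then yields the exponent $\tfrac12$ in place of $(p-1)/p<\tfrac12$, i.e.\ a strictly weaker inequality than \eqref{brez}. To recover the sharp exponent $(p-1)/p$ for $p<2$ you must estimate the whole low-frequency block $S_Nf$ at once rather than summing the individual $\|\Delta_jf\|_{L^\infty}$: by Young's inequality, $\|S_Nf\|_{L^\infty}\le\|\check\Phi_N\ast G_{3/p}\|_{L^{p'}}\|g\|_{L^p}$, and the mollified Bessel kernel has $L^{p'}$ norm $\sim N^{1/p'}$ because the singularity $|y|^{-3/p'}$ is capped at scale $2^{-N}$ --- which is essentially the Brezis--Wainger proof in disguise. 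Either restrict your argument to $p\ge 2$ (sufficient for this paper) or replace the low-frequency step by this kernel estimate. Your remark that the exponent $(p-1)/p$ is the H\"older dual cost of counting $N$ dyadic blocks is accurate only for $p\ge2$; for $p<2$ it is instead the $L^{p'}$ size of the truncated Bessel kernel.
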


The following two lemmas will be useful when we estimate the non-linear terms $\phi u$ and $|u|^{\gamma-1}u$ in fractional Sobolev spaces. They can be deduced, respectively, from \cite[Lemma 2.1]{Nakamura-Wada-MaxwSchr_CMP2007} and \cite[Proposition 4.9.4]{cazenave}.

\begin{lemma}\label{esti_conv}
For every $s\in[0,1]$ we have
$$\|\big((-\Delta)^{-1}|u|^2\big)u\|_{H^s}\lesssim\|u\|_{H^1}^2\|u\|_{H^{s+1}}.$$
\end{lemma}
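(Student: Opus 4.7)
Write $\phi=\phi(u)=(-\Delta)^{-1}|u|^2$ and think of the trilinear expression $(u_1,u_2,u_3)\mapsto ((-\Delta)^{-1}(u_1\bar u_2))u_3$ evaluated at $u_1=u_2=u_3=u$. I would fix the first two slots in $H^1$ and interpolate in the third: it suffices to prove the endpoint bounds
\begin{equation*}
\|\phi v\|_{L^2}\lesssim \|u\|_{H^1}^2\|v\|_{H^1},\qquad \|\phi v\|_{H^1}\lesssim \|u\|_{H^1}^2\|v\|_{H^2},
\end{equation*}
and then apply complex interpolation to the multiplication operator $M_\phi\colon v\mapsto \phi v$ with the fixed multiplier $\phi=\phi(u)$. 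This gives $M_\phi\colon H^{s+1}\to H^s$ with operator norm $\lesssim \|u\|_{H^1}^2$ for every $s\in[0,1]$, and setting $v=u$ yields the claim.

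\textbf{Ingredients for the endpoint estimates.} The only analytic input I need is control on $\phi$ and $\nabla\phi$. Writing $\phi=(-\Delta)^{-1}|u|^2$, the Hardy–Littlewood–Sobolev inequality (equivalently Sobolev embedding applied to $\nabla\phi\in L^2$) yields $\|\phi\|_{L^6}\lesssim \||u|^2\|_{L^{6/5}}=\|u\|_{L^{12/5}}^2\lesssim \|u\|_{H^1}^2$. Similarly, since $\nabla\phi$ is a Riesz transform of $(-\Delta)^{-1/2}|u|^2$, boundedness of Riesz transforms on $L^3$ together with HLS gives $\|\nabla\phi\|_{L^3}\lesssim \||u|^2\|_{L^{3/2}}=\|u\|_{L^3}^2\lesssim \|u\|_{H^1}^2$.

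\textbf{Closing the endpoints.} The $s=0$ bound follows from H\"older: $\|\phi v\|_{L^2}\le \|\phi\|_{L^6}\|v\|_{L^3}\lesssim \|u\|_{H^1}^2\|v\|_{H^1}$. For $s=1$ I expand $\nabla(\phi v)=(\nabla\phi)v+\phi\nabla v$ and estimate each piece:
\begin{equation*}
\|(\nabla\phi)\,v\|_{L^2}\le \|\nabla\phi\|_{L^3}\|v\|_{L^6}\lesssim \|u\|_{H^1}^2\|v\|_{H^1},
\end{equation*}
\begin{equation*}
\|\phi\,\nabla v\|_{L^2}\le \|\phi\|_{L^6}\|\nabla v\|_{L^3}\lesssim \|u\|_{H^1}^2\|v\|_{H^{3/2}}\le \|u\|_{H^1}^2\|v\|_{H^2},
\end{equation*}
where the last step uses the Sobolev embedding $H^{1/2}(\R^3)\hookrightarrow L^3(\R^3)$.

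\textbf{Main obstacle.} Nothing here is genuinely subtle; the only place one has to be careful is that $\phi$ itself is not in $L^\infty$ under the sole assumption $u\in H^1$, so the naive estimate $\|\phi v\|_{L^2}\le \|\phi\|_{L^\infty}\|v\|_{L^2}$ is unavailable. Interpolation resolves this, but one has to perform it on $M_\phi$ as a \emph{linear} operator (with $\phi$ frozen) rather than on the nonlinear map $u\mapsto \phi(u)u$ — otherwise one would be forced to interpolate between the trilinear estimates in a way that mixes the three copies of $u$, which does not give the stated sharp dependence $\|u\|_{H^1}^2\|u\|_{H^{s+1}}$.
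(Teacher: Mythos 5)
Your proof is correct. Note that the paper does not actually prove this lemma: it is quoted from \cite[Lemma 2.1]{Nakamura-Wada-MaxwSchr_CMP2007}, where trilinear estimates of this type are obtained for a range of Sobolev indices via fractional Leibniz rules combined with Hardy--Littlewood--Sobolev bounds on $(-\Delta)^{-1}$. Your route is genuinely different and more elementary: you freeze the multiplier $\phi=(-\Delta)^{-1}|u|^2$, establish only the two integer-regularity endpoints $M_\phi\colon H^1\to L^2$ and $M_\phi\colon H^2\to H^1$ by H\"older together with the bounds $\|\phi\|_{L^6}\lesssim\|u\|_{H^1}^2$ and $\|\nabla\phi\|_{L^3}\lesssim\|u\|_{H^1}^2$, and then interpolate the \emph{linear} operator $M_\phi$, using $[L^2,H^1]_s=H^s$ and $[H^1,H^2]_s=H^{s+1}$. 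All the exponents check out: $(-\Delta)^{-1}\colon L^{6/5}\to L^6$ and $\nabla(-\Delta)^{-1}\colon L^{3/2}\to L^3$ are the correct HLS/Riesz mappings on $\R^3$, $H^1(\R^3)\hookrightarrow L^p$ for $p\in[2,6]$ covers the norms $\|u\|_{L^{12/5}}$ and $\|u\|_{L^3}$, and $H^{1/2}(\R^3)\hookrightarrow L^3$ justifies $\|\nabla v\|_{L^3}\lesssim\|v\|_{H^{3/2}}$; your closing remark about interpolating with $\phi$ frozen, rather than interpolating the trilinear map in all slots simultaneously, is exactly the right precaution. What the fractional-Leibniz approach of the cited reference buys in exchange for the heavier machinery is flexibility: it yields the estimate with three independent functions and with other distributions of regularity among the factors (including placing the high norm on a factor inside $(-\Delta)^{-1}$), which is not directly accessible from your two endpoints. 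For the specific statement needed here, your argument is complete and simpler.
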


\begin{lemma}\label{esti_pure}
Let $\gamma>1$, $s\in(0,1)$, $\eps>0$, and let $p,\widetilde{p}\in[2,\infty]$ be such that $\frac{1}{p}+\frac{1}{\widetilde{p}}=\frac12$. Then
$$\||u|^{\gamma-1}u\|_{H^s}\lesssim\|u\|_{L^{p(\gamma-1)}}^{\gamma-1}\|u\|_{W^{s+\eps,\widetilde{p}}}.$$
\end{lemma}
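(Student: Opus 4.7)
The plan is to decompose $\|\,|u|^{\gamma-1}u\,\|_{H^s}\lesssim \|\,|u|^{\gamma-1}u\,\|_{L^2}+\|\,|u|^{\gamma-1}u\,\|_{\dot H^s}$ and to exploit the Hölder pairing $\tfrac1p+\tfrac1{\widetilde p}=\tfrac12$ to bound each piece separately.

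The $L^2$ piece is immediate: factoring $|u|^{\gamma-1}u=|u|^{\gamma-1}\cdot u$ and applying Hölder yields
$$\|\,|u|^{\gamma-1}u\,\|_{L^2}\le \|\,|u|^{\gamma-1}\|_{L^p}\,\|u\|_{L^{\widetilde{p}}}=\|u\|_{L^{p(\gamma-1)}}^{\gamma-1}\,\|u\|_{L^{\widetilde{p}}},$$
and the trivial embedding $W^{s+\eps,\widetilde{p}}\hookrightarrow L^{\widetilde{p}}$ closes this part.

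For the homogeneous fractional piece I would use the Aronszajn-Slobodeckij formula
$$\|v\|_{\dot H^s}^2\simeq\iint_{\R^3\times\R^3}\frac{|v(x)-v(y)|^2}{|x-y|^{3+2s}}\,dx\,dy,$$
combined with the elementary pointwise bound $\big|\,|a|^{\gamma-1}a-|b|^{\gamma-1}b\,\big|\lesssim (|a|^{\gamma-1}+|b|^{\gamma-1})|a-b|$, valid for $\gamma>1$. Setting $F(z):=|z|^{\gamma-1}z$, writing $y=x+h$ and applying Hölder in $x$ with the pair $(p,\widetilde{p})$ give
$$\|F(u(\cdot+h))-F(u)\|_{L^2_x}\lesssim \|u\|_{L^{p(\gamma-1)}}^{\gamma-1}\,\|u(\cdot+h)-u\|_{L^{\widetilde{p}}_x},$$
and substituting back produces
$$\|F(u)\|_{\dot H^s}^{2}\lesssim \|u\|_{L^{p(\gamma-1)}}^{2(\gamma-1)}\int_{\R^3}\frac{\|u(\cdot+h)-u\|_{L^{\widetilde{p}}}^{2}}{|h|^{3+2s}}\,dh.$$
The remaining integral is exactly the squared Besov seminorm $\|u\|_{\dot B^{s}_{\widetilde{p},2}}^{2}$, and since $\widetilde{p}\ge 2$ the chain of embeddings $W^{s+\eps,\widetilde{p}}=F^{s+\eps}_{\widetilde{p},2}\hookrightarrow B^{s+\eps}_{\widetilde{p},\widetilde{p}}\hookrightarrow B^s_{\widetilde{p},2}$ (Jawerth-Franke, together with the monotonicity in the smoothness index) concludes.

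The main obstacle is the subquadratic range $\gamma\in(1,2)$, where $F(z)=|z|^{\gamma-1}z$ is only Hölder of order $\gamma$ and the Christ-Weinstein fractional chain rule is not directly available. The difference-quotient argument above bypasses this by using only the pointwise Hölder bound on $F$, at the cost of the arbitrarily small loss $s\mapsto s+\eps$ produced by the last embedding; this is exactly the $\eps$ appearing in the statement. An alternative paraproduct proof via a Littlewood-Paley decomposition of $F(u)$, as in Proposition 4.9.4 of Cazenave, yields the same estimate.
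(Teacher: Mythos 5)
Your argument is correct. The paper does not prove this lemma itself (it is imported from \cite{cazenave}, Proposition 4.9.4), and your proof is essentially the standard argument behind that reference: the Gagliardo difference-quotient characterization of $\dot H^s$ for $s\in(0,1)$, combined with the pointwise bound on $F(z)=|z|^{\gamma-1}z$, bypasses the failure of the fractional chain rule for $\gamma\in(1,2)$, and the $\eps$-loss is exactly what is needed because $F^{s}_{\widetilde p,2}\hookrightarrow B^{s}_{\widetilde p,2}$ fails for $\widetilde p>2$. The only point to tidy up is the endpoint $\widetilde p=\infty$ (allowed by the statement, though not used in the paper), where the identification $W^{s+\eps,\infty}=F^{s+\eps}_{\infty,2}$ is not legitimate; there one should instead use $W^{s+\eps,\infty}\hookrightarrow B^{s+\eps/2}_{\infty,\infty}\hookrightarrow B^{s}_{\infty,2}$, after which your computation goes through unchanged.
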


Next, we state the Strichartz estimates for the Klein-Gordon equation. We say that a pair $(q,r)$ is Klein-Gordon admissible if $\frac{1}{q}+\frac{1}{r}=\frac12$, $q\in(2,+\infty]$. We have the following result \cite{Brenner,Ginibre-Velo_KG,Ginibre-Velo_Strichartz}.
\begin{lemma}\label{le:uno}
Let $T>0$, $s\in\R$, and let $(q_0,r_0)$, be a Klein-Gordon admissible pair. For any given $(A_0,A_1)\in\Sigma^s$ and $F\in L_{T}^{q'_0}W^{s-1+2/q_0,r'_0}$, there exists a unique solution  $A\in C([0,T],H^s)\cap C^1([0,T],H^{s-1})$ to the equation $(\square+1)A=F$, with initial data $A(0)=A_0$, $\partial_tA(0)=A_1$. Moreover, for every Klein-Gordon admissible pair $(q,r)$, we have the estimate
\begin{equation}\label{eq:kg_stri}
\max_{k=0,1}\|\partial_t^k A\|_{L_T^{q}W^{s-k-2/q,r}}\lesssim\|(A_0,A_1)\|_{\Sigma^s}+\|F\|_{L_T^{q'_0}W^{s+2/q_0-1,r'_0}}.
\end{equation}
\end{lemma}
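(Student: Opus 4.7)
The plan is to reduce the claim to the standard Strichartz estimates for the Klein-Gordon propagator, and then use Duhamel's formula to deduce the well-posedness part. Setting $\omega:=\sqrt{1-\Delta}$, the Cauchy problem for $(\square+1)A=F$ is explicitly solved by
\begin{equation*}
A(t)=\cos(t\omega)A_0+\omega^{-1}\sin(t\omega)A_1+\int_0^t \omega^{-1}\sin\bigl((t-s)\omega\bigr)F(s)\,ds,
\end{equation*}
so once the a priori bound \eqref{eq:kg_stri} is established, existence and uniqueness in $C([0,T],H^s)\cap C^1([0,T],H^{s-1})$ follow by standard density and energy arguments; differentiating the formula in time and applying the same estimate yields the $C^1$ regularity and the case $k=1$.

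For the a priori estimate itself, I would first localize in frequency via a Littlewood-Paley decomposition. A stationary-phase analysis applied to the oscillatory integral defining $P_je^{\pm it\omega}$ yields, in the high-frequency regime $j\geq 0$, the dispersive bound
\begin{equation*}
\|P_je^{\pm it\omega}f\|_{L^\infty_x}\lesssim 2^{3j/2}|t|^{-3/2}\|P_j f\|_{L^1_x},
\end{equation*}
together with an analogous (in fact better) decay at low frequency. The characteristic factor $2^{3j/2}$, in contrast to the $2^{3j}$ one would obtain for the pure wave propagator, is precisely what accounts for the $2/q$ derivative loss in the final estimate. Interpolating with the trivial $L^2$ conservation and invoking the Keel-Tao abstract theorem on each dyadic block gives
\begin{equation*}
\|P_j e^{\pm it\omega}f\|_{L^q_tL^r_x}\lesssim 2^{2j/q}\|P_j f\|_{L^2_x}
\end{equation*}
for every Klein-Gordon admissible pair $(q,r)$; summing in $j$ via the Littlewood-Paley square-function identity produces the homogeneous Strichartz estimate. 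The inhomogeneous version (with the general admissible pair $(q_0,r_0)$ on the source side) then follows from the Christ-Kiselev lemma.

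The main obstacle is the correct bookkeeping of the $2/q$ derivative loss, together with the non-homogeneous behaviour of the symbol $\langle\xi\rangle$ at low frequencies, where wave-type stationary phase does not apply directly and a separate (easier) Van der Corput style argument is needed. Since the estimate is by now classical and not new to this paper, in the actual write-up I would simply invoke the corresponding statements in \cite{Brenner,Ginibre-Velo_KG,Ginibre-Velo_Strichartz} rather than reproduce the proof.
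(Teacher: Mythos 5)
The paper offers no proof of this lemma: it is quoted as a classical result with the citations \cite{Brenner,Ginibre-Velo_KG,Ginibre-Velo_Strichartz}, which is exactly what you propose to do in your final sentence, so at the level of what actually appears in the text your approach coincides with the paper's. Your sketch of the underlying argument (Littlewood--Paley localization, frequency-localized dispersive bound, Keel--Tao, Christ--Kiselev, Duhamel) is the standard route and the frequency-localized Strichartz estimate $\|P_je^{\pm it\omega}f\|_{L^q_tL^r_x}\lesssim 2^{2j/q}\|P_jf\|_{L^2}$ you assert is the correct one.

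However, the dispersive estimate you write down does not produce it and is itself incorrect. For $j\ge 0$ the Hessian of $\xi\mapsto\langle\xi\rangle$ has two eigenvalues of size $\langle\xi\rangle^{-1}$ and one of size $\langle\xi\rangle^{-3}$, so stationary phase with the full rank gives $\|P_je^{\pm it\omega}f\|_{L^\infty}\lesssim 2^{5j/2}|t|^{-3/2}\|P_jf\|_{L^1}$ (not $2^{3j/2}$), while using only the two non-degenerate directions gives the wave-like bound $2^{2j}|t|^{-1}$. More importantly, the $|t|^{-3/2}$ decay corresponds to $\sigma=3/2$ in Keel--Tao and hence to the Schr\"odinger-admissible line $\frac{2}{q}+\frac{3}{r}=\frac32$; the pairs in the lemma satisfy $\frac1q+\frac1r=\frac12$, which is the $\sigma=1$ line, and the loss of $2/q$ derivatives there is exactly what the wave-like estimate $2^{2j}|t|^{-1}$ yields after Keel--Tao (since $\frac12-\frac1r=\frac1q$). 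Your parenthetical comparison with ``the $2^{3j}$ one would obtain for the pure wave propagator'' is likewise off: the wave propagator in $3$d has only $|t|^{-1}$ decay, with a $2^{2j}$ factor on dyadic blocks. None of this affects the validity of the lemma, which is indeed classical, but if you were to write the proof out rather than cite it, this step would need to be replaced by the correct dispersive input.
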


We also need a suitable smoothing-Strichartz estimate for the inhomogeneous Schr\"odinger equation. We recall that a pair $(q,r)$ is Schr\"odinger admissible if $\frac{2}{q}+\frac{3}{r}=\frac32$, $q\in[2,+\infty]$. We have the following result \cite{Nakamura-Wada-MaxwSchr_CMP2007}. 

\begin{lemma}\label{le:due}
Let $T>0$, $s,\alpha\in\R$, and let $(q,r)$ be a Schr\"odinger admissible pair. Let $F\in L_T^{2}H^{s-2\alpha}$, and let $u\in L^{\infty}H^s$ be a weak solution to $i\partial_t u=-\Delta u + F$. Then $u$ satisfies
\begin{equation}\label{kt_stri}
\|u\|_{L_T^qW^{s-\alpha,r}}\lesssim \|u\|_{L_T^{\infty}H^s}+T^{1/2}\|F\|_{L_T^2H^{s-2\alpha}}.
\end{equation}
\end{lemma}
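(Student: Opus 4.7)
The plan is to combine Duhamel's formula with two inhomogeneous Schr\"odinger Strichartz estimates, and to bridge them by complex interpolation in the regularity of the source.

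I would start by using the hypothesis $u\in L_T^\infty H^s$ to choose a reference time $t_0\in[0,T]$ (almost any $t_0$ works) at which $\|u(t_0)\|_{H^s}\le\|u\|_{L_T^\infty H^s}$, and write Duhamel's formula
$$u(t)=e^{\ii(t-t_0)\Delta}u(t_0)-\ii\int_{t_0}^{t} e^{\ii(t-\tau)\Delta}F(\tau)\,\dd\tau.$$
The linear piece $e^{\ii(t-t_0)\Delta}u(t_0)$ is controlled by the standard $L_T^q W^{s,r}$-Strichartz estimate composed with the trivial inclusion $W^{s,r}\hookrightarrow W^{s-\alpha,r}$ valid when $\alpha\ge 0$, so its contribution is bounded by $\|u\|_{L_T^\infty H^s}$.

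The heart of the argument is the control of the Duhamel integral $v(t):=-\ii\int_{t_0}^{t} e^{\ii(t-\tau)\Delta}F(\tau)\,\dd\tau$, for which I would exploit two distinct dual admissible pairs. Using $(\tilde q',\tilde r')=(1,2)$, the dual of $(\infty,2)$, together with Cauchy--Schwarz in time, gives
$$\|v\|_{L_T^q W^{s-\alpha,r}}\lesssim\|F\|_{L_T^1 H^{s-\alpha}}\le T^{1/2}\|F\|_{L_T^2 H^{s-\alpha}},$$
which delivers the correct $T^{1/2}$ prefactor but at a regularity strictly stronger than the one claimed. Using instead $(\tilde q',\tilde r')=(2,6/5)$, the dual of $(2,6)$, combined with the Sobolev embedding $H^{s-\alpha-1}\hookrightarrow W^{s-\alpha,6/5}$, yields
$$\|v\|_{L_T^q W^{s-\alpha,r}}\lesssim\|F\|_{L_T^2 H^{s-\alpha-1}}.$$
Complex interpolation of the linear map $F\mapsto v$ between these two endpoints (with interpolation parameter $\alpha\in[0,1]$) then produces an estimate at the intermediate regularity $H^{s-2\alpha}$ with a time prefactor of order $T^{(1-\alpha)/2}$, which is dominated by $T^{1/2}$ for $T\ge 1$. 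The short-time regime $T\le 1$ and values $\alpha\notin[0,1]$ would be reached by splitting $[0,T]$ into unit-length intervals in the Koch--Tzvetkov spirit, iterating the above bounds on each piece, and summing via the $\ell^2\hookrightarrow\ell^q$ inequality in the $q\ge 2$ Schr\"odinger-admissible range.

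The hard part will be executing the complex interpolation cleanly in the vector-valued Bochner setting, together with the bookkeeping of the $T$-exponent so that the single prefactor $T^{1/2}$ in the statement covers every admissible $\alpha$ uniformly across short and long times without losing regularity.
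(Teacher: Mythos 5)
Your proposal has a fatal gap at its central step. The second ``endpoint'' estimate, $\|v\|_{L_T^qW^{s-\alpha,r}}\lesssim\|F\|_{L_T^2H^{s-\alpha-1}}$, is false. You derive it from the dual Strichartz pair $(2,6/5)$ together with the embedding $H^{s-\alpha-1}\hookrightarrow W^{s-\alpha,6/5}$, but this embedding goes the wrong way: on $\R^3$ one cannot pass from $L^2$-based spaces to the \emph{smaller} Lebesgue exponent $6/5$ (already $L^2\not\hookrightarrow L^{6/5}$; test on widely spread bumps). Nor can the inequality be rescued by another route: a gain of one full derivative for the retarded operator $F\mapsto\int_{t_0}^te^{\ii(t-\tau)\Delta}F\,\dd\tau$ holds only in \emph{local} smoothing spaces, not from $L_T^2H^{s-\alpha-1}$ into global $L_T^qW^{s-\alpha,r}$. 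A concrete counterexample: take $\widehat g=\1_{|\xi-Ne_1|\le1}$ and $F(\tau)=\1_{[0,1]}(\tau)e^{\ii\tau\Delta}g$; then $v(t)=-\ii e^{\ii t\Delta}g$ for $t\ge1$, so $\|v\|_{L^2([1,2];L^6)}\sim1$ while $\|F\|_{L^2_TH^{-1}}\sim N^{-1}$, and the claimed bound fails by a factor $N$. Without a valid second endpoint, the interpolation cannot reach the regularity $H^{s-2\alpha}$, and the argument collapses. (The first endpoint, via the dual pair $(1,2)$ and Cauchy--Schwarz in time, is fine but only controls $F$ in the stronger norm $L^2_TH^{s-\alpha}$.)

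For context: the paper does not prove this lemma but quotes it from Nakamura--Wada \cite{Nakamura-Wada-MaxwSchr_CMP2007}, and its remark after the statement is telling --- the interpolation-with-smoothing strategy you are reaching for is essentially what J.~Kato \cite{Kato-Sch-maps} carried out, and it costs an $\eps$ of regularity. The lossless version requires the Koch--Tzvetkov mechanism, which you mention only as a patch for edge cases but which is in fact the whole proof: Littlewood--Paley decompose $u=\sum_NP_Nu$, partition $[0,T]$ into $\sim N^{\alpha q}$ subintervals $I_j$ of equal length, apply the homogeneous plus $(1,2)$-dual Strichartz estimate on each $I_j$ with data $P_Nu(t_j)$ (this is precisely where the hypothesis $u\in L^\infty_TH^s$ enters --- a single global Duhamel formula from one reference time $t_0$ cannot exploit it), and resum over $j$ using $\ell^2\hookrightarrow\ell^q$ for $q\ge2$. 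With $|I_j|=TN^{-\alpha q}$ the homogeneous contributions sum to $N^s\|P_Nu\|_{L^\infty_TL^2}$ with no loss, the inhomogeneous ones to $T^{1/2}N^{\alpha}\|P_NF\|_{L^2_TL^2}$, and square-summing over $N$ gives \eqref{kt_stri}. I would recommend rebuilding the proof along these lines rather than trying to repair the interpolation.
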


This kind of estimate was originally proved by Koch-Tzvetkov \cite{Koch-Tzvetkov} and Kenig-Koenig \cite{Kenig-Koenig} for the Benjamin-Ono equation. In \cite{Kato-Sch-maps} they were adapted to the Schr\"odinger equation, with an $\eps$-loss of regularity, and finally proved by Nakamura-Wada \cite{Nakamura-Wada-MaxwSchr_CMP2007} in the form above. 



Applying Lemma \ref{le:due}, with $\alpha\geq\frac12$, to the linear magnetic Schr\"odinger equation, it is possible to control the derivative term $A\nabla u$, provided the magnetic potential is regular enough. More precisely, we have the following result.

\begin{lemma}\label{le:quattordici}
Let $T>0$, $s\in [1,2]$, $\alpha\in[\frac12,1)$, $\sigma\geq 1$, with $(\alpha,\sigma)\neq(\frac12,1)$. Let $A\in L_T^{\infty}H^{\sigma}\cap L_T^2L^{3/(2\alpha-1)}$, with $\diver A=0$, and $F\in L_T^{2}H^{s-2\alpha}$. Then a weak solution $u$ to $i\partial_t u=-\Delta_A u + F$ satisfies
\begin{equation}\label{ext:nw}
\|u\|_{L_T^2W^{s-\alpha,6}} \lesssim_{\langle T\rangle^n}\langle\|A\|_{ L_T^{\infty}H^{\sigma}\cap L_T^2L^{3/(2\alpha-1)}}\rangle^m\|u\|_{L_T^{\infty}H^s}+\|F\|_{L_T^2H^{s-2\alpha}}.
\end{equation}
\end{lemma}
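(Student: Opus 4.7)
The strategy is to regard the magnetic Schrödinger equation as an inhomogeneous ordinary one and apply Lemma \ref{le:due}. Using $\diver A=0$, expand
$$-\Delta_A u = -\Delta u + 2iA\cdot\nabla u + |A|^2 u,$$
so that $u$ satisfies $i\partial_t u = -\Delta u + \widetilde F$ with $\widetilde F := F + 2iA\cdot\nabla u + |A|^2 u$. The pair $(q,r)=(2,6)$ is Schrödinger-admissible ($\tfrac{2}{2}+\tfrac{3}{6}=\tfrac{3}{2}$), so Lemma \ref{le:due} gives
$$\|u\|_{L^2_T W^{s-\alpha,6}} \lesssim \|u\|_{L^\infty_T H^s} + T^{1/2}\bigl(\|F\|_{L^2_T H^{s-2\alpha}}+\|A\cdot\nabla u\|_{L^2_T H^{s-2\alpha}}+\||A|^2 u\|_{L^2_T H^{s-2\alpha}}\bigr),$$
and the task reduces to bounding the last two terms by $\|u\|_{L^\infty_T H^s}$ times a polynomial expression in the admissible norms of $A$.

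The term $\||A|^2 u\|_{L^2_T H^{s-2\alpha}}$ is handled via the fractional Leibniz rule \eqref{frac_leibniz} together with Sobolev embeddings of $H^\sigma$ (recall $\sigma\ge 1$) into suitable Lebesgue spaces: since $s-2\alpha\le s$ and all factors live in $L^\infty_t$, one obtains a bound of the form $T^{1/2}\|A\|_{L^\infty_T H^\sigma}^2\|u\|_{L^\infty_T H^s}$. The genuinely delicate piece is $\|A\cdot\nabla u\|_{L^2_T H^{s-2\alpha}}$: apply \eqref{frac_leibniz} to distribute the derivatives between $A$ and $\nabla u$, and use Hölder in time. The principal contribution pairs $A\in L^2_T L^{3/(2\alpha-1)}$ with $\nabla u\in L^\infty_T L^{6/(5-4\alpha)}$, where the latter is obtained from $u\in L^\infty_T H^s$ via Sobolev embedding $H^{s-1}\hookrightarrow L^{6/(5-4\alpha)}$; the complementary piece places $A$ in $L^\infty_T H^\sigma$ against a lower-order norm of $\nabla u$, with an extra $T^{1/2}$ factor absorbed into $\langle T\rangle^n$. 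Summing the two contributions produces
$$\|A\cdot\nabla u\|_{L^2_T H^{s-2\alpha}}\lesssim_{\langle T^n\rangle}\langle\|A\|_{L^\infty_T H^\sigma\cap L^2_T L^{3/(2\alpha-1)}}\rangle^m\|u\|_{L^\infty_T H^s},$$
which, combined with the bound on $|A|^2 u$, yields \eqref{ext:nw}.

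The main technical obstacle is the cross term $A\cdot\nabla u$: the full derivative on $u$ pushes us to the critical regularity $H^{s-1}$ and forces a careful balancing of Hölder and Sobolev exponents, in which the hypothesis $A\in L^2_T L^{3/(2\alpha-1)}$ is essential. The parameter window $s\in[1,2]$, $\alpha\in[\tfrac12,1)$, $\sigma\ge 1$ is exactly what keeps every Hölder and Sobolev exponent inside $(1,\infty)$, and the excluded corner $(\alpha,\sigma)=(\tfrac12,1)$ is precisely where the borderline embedding $H^1\hookrightarrow L^\infty(\R^3)$ would be needed; removing this endpoint allows a slightly better choice of exponents and closes the estimate. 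A standard density/approximation argument then justifies the formal manipulations for weak solutions.
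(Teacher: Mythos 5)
Your overall strategy is the paper's: expand $-\Delta_A$, treat $2iA\cdot\nabla u+|A|^2u$ as part of the source, and apply Lemma \ref{le:due} with the endpoint pair $(2,6)$. The bound for $|A|^2u$ and the case $s\le 2\alpha$ of the cross term (where $H^{s-2\alpha}$ has non-positive order, so Hölder pairing $A\in L^{3/(2\alpha-1)}$ against $\nabla u\in L^{6/(5-2s)}$ followed by the dual Sobolev embedding suffices) are fine. But there is a genuine gap in your treatment of $A\cdot\nabla u$ when $s>2\alpha$, which is the heart of the lemma.

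When $s-2\alpha>0$, the fractional Leibniz rule \eqref{frac_leibniz} produces a second term of the form $\|A\|_{W^{s-2\alpha,r}}\|\nabla u\|_{L^q}$ with $\frac1r+\frac1q=\frac12$, and you claim this "complementary piece" is closed by putting $A$ in $L^\infty_TH^\sigma$ against "a lower-order norm of $\nabla u$" controlled by $\|u\|_{L^\infty_TH^s}$. This does not work under the stated hypotheses. Take for instance $\sigma=1$, $\alpha=0.6$, $s=2$: for $\|A\|_{W^{0.8,r}}$ to be finite one needs (even after interpolating $A$ between $H^1$ and $L^{15}$) $r\lesssim 2.4$, hence $q\gtrsim 11$, and $\|\nabla u\|_{L^{11}}$ is \emph{not} controlled by $\|u\|_{H^2}$ in $\R^3$. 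The exponents simply do not close unless $\sigma\ge\frac52-2\alpha$, which is strictly stronger than $\sigma\ge1$ near $\alpha=\frac12$. The paper's resolution is the missing idea: one estimates $\|\nabla u\|_{L^q}\lesssim\|u\|_{H^s}^{\theta}\|u\|_{W^{s-\alpha,6}}^{1-\theta}$ by Gagliardo--Nirenberg, so that the left-hand-side norm $\|u\|_{L_T^2W^{s-\alpha,6}}$ reappears on the right with exponent $1-\theta<1$; Young's inequality then isolates a term $\eps\|u\|_{L_T^2W^{s-\alpha,6}}$ which is absorbed into the left-hand side for $\eps$ small, together with an interpolation of $\|A\|_{W^{s-2\alpha,r}}$ between $\|A\|_{H^1}$ and $\|A\|_{L^{3/(2\alpha-1)}}$ in both space and time. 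Without this self-referential absorption step your argument cannot reach the full parameter range $\sigma\ge1$, $\alpha\in[\frac12,1)$. (A minor additional point: your "principal" pairing $\nabla u\in L^{6/(5-4\alpha)}$ via $H^{s-1}\hookrightarrow L^{6/(5-4\alpha)}$ is only valid for $s\ge2\alpha$; for $s<2\alpha$ the correct exponent is $6/(5-2s)$, as the two regimes require different pairings.)
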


\begin{proof}
The case $\alpha=1/2$ has been proved in \cite[Lemma 3.1]{Nakamura-Wada-MaxwSchr_CMP2007}. Let us focus on the case $\alpha>\frac12$.
Expanding the magnetic Laplacian, and applying Lemma \ref{le:due} with the endpoint Strichartz pair $(q,r)=(2,6)$, we get
\begin{equation}\label{eq:llez}
\|u\|_{L_T^2W^{s-\alpha,6}}\lesssim \|u\|_{L_T^{\infty}H^s}+T^{1/2}\|A\nabla u+|A|^2u+F\|_{L_T^2H^{s-2\alpha}}.
\end{equation}
We start by estimating the term $A\nabla u$. When $s\in[1,2\alpha]$, Sobolev embedding and H\"older inequality yield
\begin{equation}\label{bic_stock}
\begin{split}
\|A\nabla u\|_{L_T^2H^{s-2\alpha}}&\lesssim\|A\|_{L_T^2L^{3/(2\alpha-1)}}\|\nabla u\|_{L_T^{\infty}L^{6/(5-2s)}}\\
&\lesssim\|A\|_{L_T^2L^{3/(2\alpha-1)}}\|u\|_{L_T^{\infty}H^s}.
\end{split}
\end{equation}
Consider now the case $s\in(2\alpha,2]$. At spatial level, the fractional Leibniz rule \eqref{frac_leibniz} and Sobolev embedding give
\begin{equation}\label{eq:canfo}
\begin{split}
\|A\nabla u\|_{H^{s-2\alpha}}&\lesssim \|A\|_{L^{3/(2\alpha-1)}}\|u\|_{W^{s+1-2\alpha,6/(5-4\alpha)}}+\|A\|_{W^{s-2\alpha,r}}\|\nabla u\|_{L^q}\\
&\lesssim \|A\|_{L^{3/(2\alpha-1)}}\|u\|_{H^s}+\|A\|_{W^{s-2\alpha,r}}\|\nabla u\|_{L^q},
\end{split}
\end{equation}
for every Klein-Gordon admissible pair $(q,r)$. In particular, we choose the pair $(q,r)=(q(s,\alpha),r(s,\alpha))$ given by
$$
\frac2r=1-\frac2q:=(s-2\alpha)+\frac23(2\alpha-1)(1+2\alpha-s).
$$
Using Sobolev embedding and Gagliardo-Nirenberg interpolation inequality, we find $\theta:=\theta(s,\alpha)\geq\frac2q$ such that
\begin{equation}\label{ind_sic}
\|\nabla u\|_{L^q}\lesssim \|u\|^{\theta}_{H^s}\|u\|_{W^{s-\alpha,6}}^{1-\theta}.
\end{equation}
The bounds \eqref{eq:canfo}-\eqref{ind_sic}, together with Young inequality and H\"older inequality in the time variable yield
\begin{equation}\label{eq:hisp}
\begin{split}
\|A\nabla u\|_{L_T^2H^{s-2\alpha}}&\lesssim \|A\|_{L_T^2L^{3/(2\alpha-1)}}\|u\|_{L_T^{\infty}H^s}\\
&\qquad +\|A\|_{L_T^{2/\theta}W^{s-2\alpha,r}}\|u\|_{L_T^{\infty}H^s}^{\theta}\|u\|_{L_T^2W^{s-\alpha,6}}^{1-\theta}\\
&\lesssim\langle T\rangle^m\big(\|A\|_{L_T^2L^{3/(2\alpha-1)}}+\eps^{1-1/\theta}\|A\|_{L_T^qW^{s-2\alpha,r}}^{1/\theta}\big)\|u\|_{L_T^{\infty}H^s}\\
&\qquad +\eps\|u\|_{L_T^2W^{s-\alpha,6}},
\end{split}
\end{equation}
for any $\eps>0$. Moreover, for a suitable $c\in(0,1)$, we have the interpolation inequality
$$\|A\|_{W^{s-2\alpha,r}}\lesssim \|A\|_{H^1}^{c}\|A\|_{L^{3/(2\alpha-1)}}^{1-c},$$
which implies
\begin{equation}\label{eq:ita_control}
\|A\|_{L_T^qW^{s-2\alpha,r}}\lesssim_{\langle T\rangle^n}\|A\|_{L_T^{\infty}H^{\sigma}\cap L_T^2L^{3/(2\alpha-1)}}.
\end{equation}
Combining \eqref{eq:hisp} and \eqref{eq:ita_control}, we obtain
\begin{equation}\label{eq:llep}
\|A\nabla u\|_{L_T^2H^{s-2\alpha}}\lesssim_{\langle T\rangle^n}\langle\|A\|_{ L_T^{\infty}H^{\sigma}\cap L_T^2L^{3/(2\alpha-1)}}\rangle^m\|u\|_{L_T^{\infty}H^s}+\eps\|u\|_{L_T^2W^{s-\alpha,6}},
\end{equation}
which in view of \eqref{bic_stock} is valid in the whole regime $s\in[1,2]$.

Next, we consider the term $|A|^2u$. Let us prove the estimate
\begin{equation}\label{eq:lles}
\||A|^2u\|_{L_T^2H^{s-2\alpha}}\lesssim_{\langle T\rangle^n}\|A\|^2_{L^{\infty}_TH^{\sigma}\cap L_T^2L^{3/(2\alpha-1)}}\|u\|_{L_T^{\infty}H^s}.
\end{equation}
When $s\in[1,2\alpha]$, \eqref{eq:lles} follows by Sobolev embedding and H\"older inequality. When $s\in(2\alpha,2]$, we use the fractional Leibniz rule \eqref{frac_leibniz} to deduce
\begin{equation*}
\begin{split}
\||A|^2u\|_{H^{s-2\alpha}}&\lesssim \|A\|_{W^{s-2\alpha,6/(2s+1-4\alpha)}}\|A\|_{L^{3/(2\alpha-1)}}\|u\|_{L^{3/(2-s)}}+\|A\|^2_{L^6}\|u\|_{W^{s-2\alpha,6}}\\
&\lesssim \|A\|_{H^{\sigma}}(\|A\|_{L^{3/(2\alpha-1)}}+\|A\|_{H^{\sigma}})\|u\|_{H^s},
\end{split}
\end{equation*}
and \eqref{eq:lles} immediately follows.

Finally, estimate \eqref{ext:nw} is proved by combining \eqref{eq:llez}, \eqref{eq:llep} and \eqref{eq:lles}, and by choosing $\eps>0$ sufficiently small so that $\eps\|u\|_{L_T^2W^{s-\alpha,6}}$ can be absorbed into the left hand side of the inequality.
\end{proof}

We conclude this Section with some useful results for time independent magnetic potentials. For any given $A\in L^2_{\mathrm{loc}}(\R^3)$, the magnetic Laplacian $-\Delta_A$ can be defined as a non-negative self-adjoint operator on $L^2(\R^3)$, by means of a quadratic form argument \cite{Simon-79_maximal-minimal_Schr_forms}.
Given $s\geq 0$, we can define the magnetic Sobolev space 
$$H_A^s(\R^3):=\mathcal{D}((-\Delta_A+1)^{s/2}),\quad\|f\|_{H_A^s(\R^3)}:=\|(-\Delta_A+1)^{s/2}f\|_{L^2(\R^3)}.$$

When the magnetic potential is regular enough, the classical and magnetic Sobolev norms, for a suitable regime of regularity, are equivalent. In particular, we shall use the following result (see e.g.~\cite[Lemma 2.2]{Nakamura-Wada-MaxwSchr_CMP2007}).

\begin{lemma}\label{le:quattro}
Let $s\in[0,2]$. Then
\begin{equation}\label{equi_ma_cla}
\langle\|A\|_{H^1}\rangle^{-m}\|f\|_{H_A^s}\lesssim\|f\|_{H^s}\lesssim \langle\|A\|_{H^1}\rangle^{m}\|f\|_{H_A^s}.
\end{equation}
\end{lemma}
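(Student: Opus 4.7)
The plan is to establish the equivalence \eqref{equi_ma_cla} at the two integer endpoints $s=0$ and $s=2$, and then recover the intermediate range $s\in(0,2)$ by interpolation of the domains of fractional powers.

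At $s=0$ both norms coincide with $\|f\|_{L^2}$, so there is nothing to prove. For $s=2$ the starting point is the pointwise expansion
\[
-\Delta_A f \;=\; -\Delta f \,+\, 2iA\cdot\nabla f \,+\, i(\diver A)f \,+\, |A|^2 f,
\]
so that $\|f\|_{H_A^2}$ and $\|f\|_{H^2}$ differ by the commutator terms $A\cdot\nabla f$, $(\diver A)f$ and $|A|^2 f$. For the bound $\|f\|_{H_A^2}\lesssim \langle\|A\|_{H^1}\rangle^m\|f\|_{H^2}$ I would estimate these by H\"older, using the three-dimensional Sobolev embeddings $H^1\hookrightarrow L^6$ and $H^2\hookrightarrow L^\infty$:
\[
\|A\cdot\nabla f\|_{L^2}\lesssim \|A\|_{L^3}\|\nabla f\|_{L^6},\qquad \|(\diver A)f\|_{L^2}\lesssim \|A\|_{H^1}\|f\|_{L^\infty},\qquad \||A|^2 f\|_{L^2}\leq \|A\|_{L^6}^2\|f\|_{L^6},
\]
each of which is controlled by $\langle\|A\|_{H^1}\rangle^2\|f\|_{H^2}$.

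For the reverse bound $\|f\|_{H^2}\lesssim\langle\|A\|_{H^1}\rangle^m\|f\|_{H_A^2}$ I would turn the expansion around, writing $-\Delta f$ in terms of $-\Delta_A f$ and the same three commutator terms; the new issue is that the commutators must be reabsorbed into the left-hand side. For the first term, the embedding $H^{1/2}\hookrightarrow L^3$ together with Gagliardo--Nirenberg give
\[
\|A\cdot\nabla f\|_{L^2}\leq \|A\|_{L^6}\|\nabla f\|_{L^3}\lesssim \|A\|_{H^1}\|f\|_{H^2}^{3/4}\|f\|_{L^2}^{1/4},
\]
and Young's inequality converts this into $\varepsilon\|f\|_{H^2}+C_\varepsilon\|A\|_{H^1}^{4}\|f\|_{L^2}$. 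The $|A|^2 f$ term is handled by $\|f\|_{L^6}\lesssim \|f\|_{H^2}^{1/2}\|f\|_{L^2}^{1/2}$, and the $(\diver A)f$ term by interpolating $L^\infty$ between $H^{7/4}$ and $L^2$ (staying strictly above the critical $H^{3/2}$ embedding). Choosing $\varepsilon$ small absorbs $\varepsilon\|f\|_{H^2}$, leaving
\[
\|f\|_{H^2}\lesssim \|f\|_{H_A^2}+\langle\|A\|_{H^1}\rangle^m\|f\|_{L^2}\lesssim \langle\|A\|_{H^1}\rangle^m\|f\|_{H_A^2},
\]
where in the last inequality we used $\|f\|_{L^2}\leq \|f\|_{H_A^2}$.

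Finally, for $s\in(0,2)$ I would argue by complex interpolation. Since both $-\Delta_A+1$ and $1-\Delta$ are positive self-adjoint on $L^2(\R^3)$, the general theory of fractional powers identifies $H_A^s=[L^2,H_A^2]_{s/2}$ and $H^s=[L^2,H^2]_{s/2}$. The identity map, viewed as a bounded operator on each endpoint with norm $1$ at $s=0$ and $\langle\|A\|_{H^1}\rangle^m$ at $s=2$, therefore extends to the interpolated scale with norm $\langle\|A\|_{H^1}\rangle^{m}$ throughout $s\in[0,2]$, which gives \eqref{equi_ma_cla}. The technically delicate step is the reverse bound at $s=2$: the endpoint 3-D embedding $H^{3/2}\hookrightarrow L^\infty$ fails, so the $(\diver A)f$ term forces a strictly sub-top-order interpolation (through $H^{7/4}$, say) to keep the Young penalty polynomial in $\|A\|_{H^1}$. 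Once this absorption is in place the remainder of the argument is routine.
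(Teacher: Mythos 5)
The paper does not prove this lemma itself but cites it from \cite[Lemma 2.2]{Nakamura-Wada-MaxwSchr_CMP2007}, whose argument is exactly the one you give: establish the case $s=2$ by expanding $-\Delta_A$, controlling the commutator terms via H\"older, Sobolev embedding and Young absorption, and then interpolate the identity map between $s=0$ and $s=2$ using that the domains of fractional powers of a non-negative self-adjoint operator form a complex interpolation scale. Your proof is correct and essentially identical; the only points worth making explicit are that the absorption step in the reverse $s=2$ bound presupposes $\|f\|_{H^2}<\infty$ (justified by the standard density/regularization argument), and that in the Coulomb-gauge setting of this paper $\diver A=0$, so the $(\diver A)f$ term you handle through $H^{7/4}$ is in fact absent.
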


Last, we recall the diamagnetic inequality \cite[Theorem 7.21]{Lieb-Loss-Analysis}, which asserts that for every $A\in L^2_{\mathrm{loc}}(\R^3)$, and $f\in H_A^1(\R^3)$,
\begin{equation}\label{eq:diamag}
|(\nabla|f|)(x)|\leq|(\nabla_Af)(x)|\quad\mbox{for a.e. }x\in\R^3.
\end{equation}
As a particular instance of the diamagnetic inequality, we have the bound
\begin{equation}\label{eq:diamag_senza}
|(\nabla|f|)(x)|\leq|(\nabla f)(x)|\quad\mbox{for a.e. }x\in\R^3,
\end{equation}
for every $f\in H^1(\R^3)$.

\section{A priori estimates}\label{sect:apri}
In this Section we prove suitable a priori estimates for weak solutions to \eqref{eq:MS}, which will play a crucial role in the well-posedness argument. We start with the following result.
\begin{proposition}\label{th:apriori}
Fix $\gamma\in(1,4)$, $s\in[1,2]$, $\sigma>1$, $T>0$, and set $\tilde{\sigma}:=\min\{\sigma,\frac76\}$. Let $(u,A)$ be a weak solution to \eqref{eq:MS}, with $(u,A,\partial_t A)\in L_T^{\infty}M^{s,1}$ and with initial data $(u_0,A_0,A_1)\in M^{s,\sigma}$. Then the following estimates hold true.
\begin{equation}\label{eq:apriori_uno}
\begin{split}
&\|A\|_{L_T^2L^{\infty}}+\|(A,\partial_t A)\|_{L_T^{\infty}\Sigma^{\tilde{\sigma}}}\lesssim_{\langle T\rangle^n}\langle\|(u,A)\|_{L_T^{\infty}(H^1\times H^1)}^m\rangle\langle\|(A_0,A_1)\|_{\Sigma^{\sigma}}^m\rangle,
\end{split}
\end{equation}
\begin{equation}\label{eq:apriori_due}
\begin{split}
&\|u\|_{L^2_TW^{s-1/2,6}}\lesssim_{\langle T\rangle^n}\langle\|(u,A)\|_{L_T^{\infty}(H^1\times H^1)}^m\rangle\langle\|(A_0,A_1)\|_{\Sigma^{\sigma}}^m\rangle\|u\|_{L_T^{\infty}H^s}.
\end{split}
\end{equation}
\end{proposition}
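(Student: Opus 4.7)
The two estimates will be established sequentially: \eqref{eq:apriori_uno} first, so that its $L^2_TL^\infty$ output on $A$ can then be fed into the hypothesis of Lemma~\ref{le:quattordici} to produce \eqref{eq:apriori_due}.

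For \eqref{eq:apriori_uno}, I rewrite the wave equation for $A$ as the Klein--Gordon equation $(\square+1)A=\mathbb{P}J+A$, where $J=2\IM(\bar u\nabla u)-2A|u|^2$, and apply Lemma~\ref{le:uno} at the output regularity $\tilde\sigma$; since $\tilde\sigma\le\sigma$, the initial-data contribution is bounded by $\|(A_0,A_1)\|_{\Sigma^\sigma}$. Choosing the pair $(q,r)=(\infty,2)$ on the left-hand side recovers the $L^\infty_T\Sigma^{\tilde\sigma}$ norm, while the $L^2_TL^\infty$ control on $A$ follows by picking a Klein--Gordon admissible pair $(q,r)$ with $q>2$ close to $2$ so that the Sobolev embedding $W^{\tilde\sigma-2/q,r}\hookrightarrow L^\infty$ holds---available precisely because $\tilde\sigma>1$---followed by a short Hölder step in time, which contributes a factor $T^{1/2-1/q}$.

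The heart of the argument is the estimate of the forcing $\mathbb{P}J+A$ in a suitable dual Strichartz norm $L^{q_0'}_TW^{\tilde\sigma+2/q_0-1,r_0'}$, using only the energy-level quantities $\|u\|_{L^\infty_TH^1}$ and $\|A\|_{L^\infty_TH^1}$ included in the $M^{s,1}$ assumption. Lemma~\ref{le:tre} provides a product-type bound on $\mathbb{P}(\bar u\nabla u)$ that distributes the gradient between the two copies of $u$; the cubic term $A|u|^2$ is handled via the fractional Leibniz rule \eqref{frac_leibniz} together with $H^1\hookrightarrow L^6$; and the linear contribution $A$ on the right-hand side is absorbed into $T\|A\|_{L^\infty_TH^1}$ after Sobolev embedding, producing the $\langle T\rangle^n$ prefactor. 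This is the main obstacle of the proof: the dual admissible pair $(q_0,r_0)$ and the Hölder/Sobolev indices in Lemma~\ref{le:tre} must be tuned simultaneously so that only first-order norms of $u$ appear on the right, and it is precisely this combinatorial balance that caps the propagated regularity at $\tilde\sigma\le 7/6$ and forces the truncation in the definition of $\tilde\sigma$.

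For \eqref{eq:apriori_due}, I apply Lemma~\ref{le:quattordici} with $\alpha=1/2$ to the equation $i\partial_tu=-\Delta_A u+F$, where $F=\phi u+|u|^{\gamma-1}u$; the excluded endpoint $(\alpha,\sigma)=(1/2,1)$ is avoided since $\sigma>1$. The mixed norm $\|A\|_{L^\infty_TH^\sigma\cap L^2_TL^\infty}$ required by the lemma is exactly what \eqref{eq:apriori_uno} supplies. The forcing is estimated in $L^2_TH^{s-1}$ by using Lemma~\ref{esti_conv} on the Hartree term $\phi u$ and Lemma~\ref{esti_pure} on the power nonlinearity $|u|^{\gamma-1}u$, with the restriction $\gamma<4$ ensuring that the exponent $p(\gamma-1)$ can be chosen inside the Sobolev window $H^1\hookrightarrow L^6$. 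Plugging these bounds into \eqref{ext:nw} yields \eqref{eq:apriori_due}.
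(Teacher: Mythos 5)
There is a genuine gap in the architecture of your argument: you cannot prove \eqref{eq:apriori_uno} first, using ``only the energy-level quantities $\|u\|_{L^\infty_TH^1}$ and $\|A\|_{L^\infty_TH^1}$''. Since $\tilde\sigma>1$, any application of Lemma~\ref{le:uno} at output regularity $\tilde\sigma$ forces you to place the current term $\mathbb{P}(\bar u\nabla u)$ in a space of \emph{strictly positive} Sobolev regularity, namely $L_T^{q_0'}W^{\tilde\sigma+2/q_0-1,r_0'}$ with $\tilde\sigma+2/q_0-1>0$ for every admissible choice of $(q_0,r_0)$. Lemma~\ref{le:tre} then produces a factor of the form $\|u\|_{W^{\tilde\sigma+2/q_0-1,p_1}}$ paired with $\|\nabla u\|_{L^{q_1}}$, $\frac1{p_1}+\frac1{q_1}=\frac1{r_0'}$, and no tuning of the indices makes both factors controllable by $\|u\|_{L_T^\infty H^1}$ alone: with $\nabla u$ only in $L^2$ one is pushed to $\|u\|_{W^{\tilde\sigma-2/3,6}}$ (the paper's choice $(q_0',r_0')=(6/5,3/2)$), and $H^1\not\hookrightarrow W^{\tilde\sigma-2/3,6}$. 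The missing ingredient is exactly the Kato--Koch--Tzvetkov smoothing norm $\|u\|_{L_T^2W^{1/2-\delta,6}}$, which is an instance of \eqref{eq:apriori_due} \emph{with a $\delta$-loss}. This is why the paper's proof is a three-stage bootstrap rather than a sequential one: (i) Klein--Gordon Strichartz bounds for $A$ at the low regularity $2/q\le 1$, where $\mathbb{P}J$ can indeed be estimated by $H^1$ norms only (estimate \eqref{eq:cirga}); (ii) the lossy smoothing estimate $\|u\|_{L_T^2W^{s-1/2-\delta,6}}\lesssim\cdots$ via Lemma~\ref{le:quattordici} with $\alpha=\frac12+\delta$, using (i) and, for $\gamma\in(3,4)$, an additional iteration in $\delta$ (estimate \eqref{eq:quasi}); (iii) estimate \eqref{eq:apriori_uno}, whose proof of the bound on $\mathbb{P}(\bar u\nabla u)$ in $W^{\tilde\sigma-2/3,3/2}$ consumes (ii) — and this is also where the cap $\tilde\sigma\le\frac76$ actually comes from, via $\tilde\sigma-\frac23\le\frac12$; (iv) only then \eqref{eq:apriori_due} with $\alpha=\frac12$.

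A second, related omission occurs in your final step. After applying Lemma~\ref{le:quattordici} with $\alpha=\frac12$, Lemma~\ref{esti_pure} bounds $\||u|^{\gamma-1}u\|_{H^{s-1}}$ by $\|u\|_{L^{p(\gamma-1)}}^{\gamma-1}\|u\|_{W^{s-1+\eps,\widetilde p}}$, and the factor $\|u\|_{W^{s-1+\eps,\widetilde p}}$ is \emph{not} controlled by $\|u\|_{L_T^\infty H^s}$ (for $\widetilde p\ge 6$ one would need $H^{s+\eps}$ or more). The paper closes this by placing that factor in $L_T^2$ and invoking the embedding $W^{s-1/2-\delta,6}\hookrightarrow W^{s-1+\eps,\widetilde p}$ together with the lossy estimate \eqref{eq:quasi} once more. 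Your ingredient list (Lemmas~\ref{le:uno}, \ref{le:tre}, \ref{le:quattordici}, \ref{esti_conv}, \ref{esti_pure}) is the right one, but the order in which the two estimates feed each other must be reversed and interleaved as above for the proof to close.
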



Estimates \eqref{eq:apriori_uno} - \eqref{eq:apriori_due} yield a non trivial gain of integrability for weak solutions to the Maxwell-Schr\"odinger system, due to the smoothing estimates presented in the previous Section. Moreover, \eqref{eq:apriori_uno} implies that the magnetic field enjoys a persistence of regularity property. Finally, we also notice that the right hand side of \eqref{eq:apriori_due} is linear in the higher norm, hence this will enable us to infer global bounds for high Sobolev norms of $u$.

\begin{proof}[Proof of Proposition \ref{th:apriori}]
Let $(q,r)$ be a Klein-Gordon admissible pair, with $q\in(2,4]$. Applying Lemma \ref{le:uno} to the equation $(\square+1)A=\mathbb{P}J+A$, we get
\begin{equation}\label{kl_go}
\|A\|_{L_T^qL^r}\lesssim\|(A_0,A_1)\|_{\Sigma^{2/q}}+\|\mathbb{P}J\|_{L_T^{q'}W^{4/q-1,r'}}+\|A\|_{L_T^1H^{2/q-1}}.
\end{equation}
The first and third term in the r.h.s.~of \eqref{kl_go} are easily controlled. Indeed we have the bound
\begin{equation}\label{trivk}
\|(A_0,A_1)\|_{\Sigma^{2/q}}+\|A\|_{L_T^1H^{2/q-1}}\lesssim \|(A_0,A_1)\|_{\Sigma^{\sigma}}+T\|A\|_{L_T^{\infty}H^1}.
\end{equation}
We focus on the second term. Lemma \ref{le:tre} and the fractional Leibniz rule \eqref{frac_leibniz} yield
\begin{equation}\label{eq:zc}
\begin{split}
\|\mathbb{P}J&\|_{L_T^{q'}W^{4/q-1,r'}}\lesssim_{\langle T\rangle^n} \|\mathbb{P}(\overline{u}\nabla u)+A|u|^2\|_{L_T^{\infty}W^{4/q-1,r'}}\\
&\lesssim_{\langle T\rangle^n} \|u\|_{L_T^{\infty}W^{4/q-1,q}}\|\nabla u\|_{L_T^{\infty}L^2}\\
&\quad+\|A\|_{L_T^{\infty}W^{4/q-1,q}}\|u^2\|_{L_T^{\infty}L^2}+\|u\|_{L_T^{\infty}W^{4/q-1,q}}\|u\|_{L_T^{\infty}L^4}\|A\|_{L_T^{\infty}L^4}\\
&\lesssim_{\langle T\rangle^n} \|u\|_{L_T^{\infty}H^1}^2\langle\|A\|_{L_T^{\infty}H^1}\rangle.
\end{split}
\end{equation}
Combining \eqref{kl_go}, \eqref{trivk} and \eqref{eq:zc}, we deduce that
\begin{equation}\label{eq:cirga}
\|A\|_{L_T^qL_x^r}\lesssim_{\langle T\rangle^n} \langle\|u\|_{L_T^{\infty}H^1}^2\rangle\langle\|A\|_{L_T^{\infty}H^1}\rangle+ \|(A_0,A_1)\|_{\Sigma^1}.
\end{equation}
Interpolating with the trivial bound for $\|A\|_{L_T^{\infty}L^2}$, we conclude that \eqref{eq:cirga} is true for every Klein-Gordon admissible pair $(q,r)$. 

Our next step is to prove that, for any $\delta>0$,
\begin{equation}\label{eq:quasi}
\|u\|_{L_T^2W^{s-1/2-\delta,6}}\lesssim_{\langle T\rangle^n}\langle\|(u,A)\|_{L_T^{\infty}(H^1\times H^1)}^m\rangle\langle\|(A_0,A_1)\|_{\Sigma^{\sigma}}^m\rangle\|u\|_{L_T^{\infty}H^s}.
\end{equation}
Assume for simplicity $\delta\in(0,\frac12)$, and denote by $(q_{\delta},r_{\delta})$ the Klein-Gordon admissible pair such that $r_{\delta}=\frac{3}{2\delta}$. Let us set moreover 
$$p=p(\gamma):=\min\left\{3,\textstyle{\frac{6}{\gamma-1}}\right\}>2,\quad \widetilde{p}=\widetilde{p}(\gamma):=\max\left\{6,\textstyle{\frac{6}{4-\gamma}}\right\}<\infty.$$ 
We consider first the case $\gamma\in(1,3]$. Applying Lemma \ref{le:quattordici} with $\alpha=1/2+\delta$, and using Lemma \ref{esti_conv} and Lemma \ref{esti_pure} for the pair $(p,\widetilde{p})$, we obtain
\begin{align*}
\|u\|_{L_T^2W^{s-1/2-\delta,6}}&\lesssim_{\langle T\rangle^n}\langle\|A\|_{ L_T^{\infty}H^{1}\cap L_T^{q_{\delta}}L^{r_{\delta}}}\rangle^m\|u\|_{L_T^{\infty}H^s}+\|\phi u+|u|^{\gamma-1}u\|_{L_T^2H^{s-1-2\delta}}\\
&\lesssim_{\langle T\rangle^n}\langle\|A\|_{ L_T^{\infty}H^{1}\cap L_T^{q_{\delta}}L^{r_{\delta}}}\rangle^m\|u\|_{L_T^{\infty}H^s}+\|u\|_{L_T^{\infty}H^1}^2\|u\|_{L_T^{\infty}H^s}\\
&\quad+\|u\|_{L_T^{\infty}L^{3(\gamma-1)}}^{\gamma-1}\|u\|_{L_T^{\infty}W^{s-1,6}}\\
&\lesssim_{\langle T\rangle^n}\big(\langle\|A\|_{ L_T^{\infty}H^{1}\cap L_T^{q_{\delta}}L^{r_{\delta}}}\rangle^m+\langle\|u\|_{L_T^{\infty}H^1}\rangle^2\big)\|u\|_{L_T^{\infty}H^s},
\end{align*}
which combined with \eqref{eq:cirga} yield the bound \eqref{eq:quasi}. For the case $\gamma\in(3,4)$ we proceed as follows. Let us set $\delta_1=\frac{\gamma-3}{4}\in(0,\frac14)$, and applying Lemma \ref{le:quattordici}, Lemma \ref{esti_conv}, and Lemma \ref{esti_pure} for the pair $(p,\widetilde{p})$ we get
\begin{equation*}
\begin{split}
\|u&\|_{L_T^2W^{(s-1/2-\delta_1)-,6}}\lesssim_{\langle T\rangle^n}\langle\|A\|_{L_T^{\infty}H^{1}\cap L_T^{q_{\delta_1}}L^{r_{\delta_1}}}\rangle^m\|u\|_{L_T^{\infty}H^s}\\
&+\|\phi u+|u|^{\gamma-1}u\|_{L_T^2H^{(s-1-2\delta_1)-}}\\
&\lesssim_{\langle T\rangle^n}\langle\|A\|_{L_T^{\infty}H^{1}\cap L_T^{q_{\delta_1}}L^{r_{\delta_1}}}\rangle^m\|u\|_{L_T^{\infty}H^s}+\|u\|_{L_T^{\infty}H^1}^2\|u\|_{L_T^{\infty}H^s}\\
&\quad +\|u\|_{L_T^{\infty}L^6}^{\gamma-1}\|u\|_{L_T^{\infty}W^{s-1-2\delta_1},\widetilde{p}}.
\end{split}
\end{equation*}
Combining the bound above, the embedding $H^s\hookrightarrow W^{s-1-2\delta_1,\widetilde{p}}$, and estimate \eqref{eq:cirga} we deduce
\begin{equation}\label{ba_db}
\|u\|_{L_T^2W^{(s-1/2-\delta_1)-,6}}\lesssim_{\langle T\rangle^n}\langle\|(u,A)\|_{L_T^{\infty}(H^1\times H^1)}^m\rangle\langle\|(A_0,A_1)\|_{\Sigma^{\sigma}}^m\rangle\|u\|_{L_T^{\infty}H^s}.
\end{equation}
When $\gamma\in(3,\frac{11}{3}]$, we can apply again Lemma \ref{le:quattordici}, Lemma \ref{esti_conv}, and Lemma \ref{esti_pure} for the pair $(p,\widetilde{p})$, obtaining
\begin{align*}
\|u\|_{L_T^2W^{s-1/2-\delta,6}}&\lesssim_{\langle T\rangle^n}\langle\|A\|_{L_T^{\infty}H^{1}\cap L_T^{q_{\delta}}L^{r_{\delta}}}\rangle^m\|u\|_{L_T^{\infty}H^s}+\|\phi u+|u|^{\gamma-1}u\|_{L_T^2H^{s-1-2\delta}}\\
&\lesssim_{\langle T\rangle^n}\langle\|A\|_{L_T^{\infty}H^{1}\cap L_T^{q_{\delta}}L^{r_{\delta}}}\rangle^m\|u\|_{L_T^{\infty}H^s}+\|u\|_{L_T^{\infty}H^1}^2\|u\|_{L_T^{\infty}H^s}\\
&\quad+\|u\|_{L_T^{\infty}L^{6}}^{\gamma-1}\|u\|_{L_T^{2}W^{s-1,\widetilde{p}}}.
\end{align*}
Combining the bound above with estimates \eqref{eq:cirga} and \eqref{ba_db}, and observing that $W^{s-1/2-\delta_1,6}\hookrightarrow W^{s-1,\widetilde{p}}$, we obtain \eqref{eq:quasi}. In the remaining case $\gamma\in(\frac{11}{3},4)$, we first get a bound for $\|u\|_{L_T^2W^{(s-1/2-\delta_2)-,6}}$, for some $\delta_2\in(0,\delta_1)$, and iterating sufficiently many times such argument we eventually deduce estimate \eqref{eq:quasi}.

Let us fix now a Klein-Gordon admissible pair $(q,r)$ with $r(\tilde{\sigma}-1)>1$, so that $(\tilde{\sigma}-\frac{2}{q})r>3$. Using Sobolev embedding and Lemma \ref{le:uno} we get
\begin{equation}\label{mam_nunv}
\begin{split}
\|A\|_{L_T^2L^{\infty}}+&\|(A,\partial_t A)\|_{L_T^{\infty}\Sigma^{\tilde{\sigma}}}\lesssim \|A\|_{L_{T}^qW^{\tilde{\sigma}-2/q,r}}+\|(A,\partial_t A)\|_{L_T^{\infty}\Sigma^{\tilde{\sigma}}}\\
&\qquad\lesssim \|(A_0,A_1)\|_{\Sigma^{\tilde{\sigma}}}+\|A\|_{L_T^1H^{\tilde{\sigma}-1}}+\|\mathbb{P}J\|_{L_T^{6/5}W^{\tilde{\sigma}-2/3,3/2}}.
\end{split}
\end{equation}
The first two terms in the last expression are easily controlled, indeed we have the bound
\begin{equation}\label{bou_sotri}
\|(A_0,A_1)\|_{\Sigma^{\tilde{\sigma}}}+\|A\|_{L_T^1H^{\tilde{\sigma}-1}}\lesssim  \|(A_0,A_1)\|_{\Sigma^{\sigma}}+T\|A\|_{L_T^{\infty}H^{1}}.
\end{equation}
We focus on the third term, assuming preliminary that $\sigma<\frac76$. Using Lemma \ref{le:tre} and the fractional Leibniz rule \eqref{frac_leibniz}, and observing that $\tilde{\sigma}-2/3<1/2$, we obtain
\begin{equation}\label{dabc}
\begin{split}
\|\mathbb{P}J\|_{L_T^{6/5}W^{\tilde{\sigma}-2/3,3/2}}&\lesssim_{\langle T\rangle^n}\|\mathbb{P}(\overline{u}\nabla u)\|_{L_T^2W^{\tilde{\sigma}-2/3,3/2}} + \|A|u|^2\|_{L_T^{\infty}W^{\tilde{\sigma}-2/3,3/2}}\\
&\lesssim \|u\|_{L_T^2W^{\tilde{\sigma}-2/3,6}}\|\nabla u\|_{L_T^{\infty}L^2}+\|A\|_{L_T^{\infty}W^{1/2,3}}\|u^2\|_{L_T^{\infty}L^3}\\
&\quad+\|A\|_{L_T^{\infty}L^6}\|u\|_{L_T^{\infty}W^{1/2,3}}\|u\|_{L_T^{\infty}L^6}\\
&\lesssim (\|u\|_{L_T^2W^{1/2-\delta,6}}+\|A\|_{L_T^{\infty}H^1})\langle\|u\|_{L_T^{\infty}H^1}^2\rangle,
\end{split}
\end{equation}
for some $\delta>0$ small enough. Combining \eqref{mam_nunv}, \eqref{bou_sotri} and \eqref{dabc} with the bound \eqref{eq:quasi}, we deduce the a priori estimate \eqref{eq:apriori_uno}, in the regime $\sigma<\frac76$.

Next, we apply Lemma \ref{le:quattordici} with $\alpha=1/2$, Lemma \ref{esti_conv}, and Lemma \ref{esti_pure} for the pair $(p,\widetilde{p})$, obtaining
\begin{equation*}
\begin{split}
\|u\|_{L_T^2W^{s-1/2,6}}&\lesssim_{\langle T\rangle^n}\langle\|A\|_{L_T^{\infty}H^{\tilde{\sigma}}\cap L_{T}^2L^{\infty}}^m\rangle\|u\|_{L_T^{\infty}H^s}+\|\phi u+|u|^{\gamma-1}u\|_{L_T^2H^{s-1}}\\
&\lesssim_{\langle T\rangle^n}\langle\|A\|_{L_T^{\infty}H^{\tilde{\sigma}}\cap L_{T}^2L^{\infty}}^m\rangle\|u\|_{L_T^{\infty}H^s}+\|u\|_{L_T^{\infty}H^1}^2\|u\|_{L_T^{\infty}H^s}\\
&\quad\qquad +\|u\|_{L_T^{\infty}L^6}^{\gamma-1}\|u\|_{L_T^2W^{s-1+\eps,\widetilde{p}}},
\end{split}
\end{equation*}
for every $\eps>0$. In particular, let us choose $\eps$ small enough such that $\eps\widetilde{p}<3$, and set $\delta:=3/\widetilde{p}-\eps>0$. Then the bound above, combined with the Sobolev embedding $W^{s-1/2-\delta,6}\hookrightarrow W^{s-1+\eps,\widetilde{p}}$ and the estimates \eqref{eq:quasi} and \eqref{eq:apriori_uno} yields
\begin{align*}
\|u\|_{L_T^2W^{s-1/2,6}}&\lesssim_{\langle T\rangle^n}\big(\langle\|A\|_{L_T^{\infty}H^{\tilde{\sigma}}\cap L_{T}^2L^{\infty}}^m\rangle+\|u\|_{L_T^{\infty}H^1}^2\big)\|u\|_{L_T^{\infty}H^s}\\
&\qquad\quad+\|u\|_{L_T^{\infty}H^1}^{\gamma-1}\|u\|_{L_T^2W^{s-1/2-\delta,6}}\\
&\lesssim_{\langle T\rangle^n}\langle\|(u,A)\|_{L_T^{\infty}(H^1\times H^1)}^m\rangle\langle\|(A_0,A_1)\|_{\Sigma^{\sigma}}^m\rangle\|u\|_{L_T^{\infty}H^s},
\end{align*}
which proves the a priori estimate \eqref{eq:apriori_due}. Moreover, it follows that \eqref{dabc}, whence also $\eqref{eq:apriori_uno}$, is valid for $\sigma\geq\frac76$, which concludes the proof.
\end{proof}

\begin{remark}\label{re:cou_due}
Observe that Lemma \ref{le:tre} has been crucial in order to obtain estimates \eqref{eq:zc} and \eqref{dabc}, as it allows one to avoid the harmful terms which would appear by directly applying a fractional Leibniz rule. This kind of mechanism does not apply for similar models, such as the Maxwell-Pauli system, which involve pure-curl currents modeling quantum spin effects. In fact, local well-posedness at low regularity for the Maxwell-Pauli system appears to be an open problem -- see the discussion in \cite{Kieffer}.
\end{remark}

The a priori estimates encoded in Proposition \ref{th:apriori} turn out to be very useful in the analysis of the Lorentz force associated with a solution $(u,A)$ to the non-linear Maxwell-Schr\"odinger system \eqref{eq:MS}. Let us recall that the Lorentz force is formally defined by $F_L:=\rho E+J\times B$, where $(E,B)$ is the electro-magnetic field, given explicitly by $E=-\partial_t A-\nabla\phi$, $B=\nabla\times A$, by means of the Maxwell equations. 

For sufficiently regular solutions, defined on a time interval $[0,T]$, it is straightforward to deduce that $F_L$ belongs to $L_T^{\infty}L^1$. On the contrary for a generic finite energy  solution it is unknown whether one can give a meaning to the Lorentz force, at least in a distributional sense. As mentioned in the Introduction, this issue already emerges for the classical linear Maxwell-Schr\"odinger system \eqref{clas_MS}. 

As a consequence of estimates \eqref{eq:apriori_uno}-\eqref{eq:apriori_due}, in the next Proposition we show that, as long as $\gamma\in(1,4)$, $u\in H^1$, and the magnetic potential is slightly more regular than being in the energy space, i.e.~$A\in H^{\sigma}$ for some $\sigma>1$, we have that the Lorentz force belongs to a suitable Lebesgue space. 

\begin{proposition}\label{orc}
Fix $\gamma\in(1,4)$, $\sigma\in(1,\frac76)$, and $T>0$. Let $(u,A)$ be a weak solution to the system \eqref{eq:MS}, with $(u,A,\partial_t A)\in L_T^{\infty}M^{1,\sigma}$ and with initial data $(u_0,A_0,A_1)\in M^{1,\sigma}$. Then the Lorentz force $F_L$ belongs to $L_T^2L^1$.
\end{proposition}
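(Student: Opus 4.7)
I would split the Lorentz force into its four elementary pieces
\begin{equation*}
F_{L} \;=\; -\,|u|^{2}\,\partial_{t}A \;-\; |u|^{2}\,\nabla\phi \;+\; 2\,\IM(\bar u\,\nabla u)\times B \;-\; 2A|u|^{2}\times B, \qquad B := \nabla\times A,
\end{equation*}
and bound each of them in $L_{T}^{2}L^{1}$ by direct H\"older estimates. All of the integrability beyond what the energy bound provides will come from Proposition \ref{th:apriori} applied with $s=1$ (so that $\tilde\sigma = \sigma$ in the present range $\sigma \in (1,7/6)$), which furnishes
\begin{equation*}
A \in L_{T}^{2}L^{\infty} \cap L_{T}^{\infty}H^{\sigma},\qquad u \in L_{T}^{2}W^{1/2,6}.
\end{equation*}

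The two ``electric'' contributions $|u|^{2}\partial_{t}A$ and $|u|^{2}\nabla\phi$ are immediate: since $u \in L_{T}^{\infty}H^{1} \hookrightarrow L_{T}^{\infty}L^{4}$ one has $|u|^{2} \in L_{T}^{\infty}L^{2}$, while both $\partial_{t}A$ and $\nabla\phi$ lie in $L_{T}^{\infty}L^{2}$ (the former because $\sigma>1$, the latter by energy conservation). H\"older then places them in $L_{T}^{\infty}L^{1} \subset L_{T}^{2}L^{1}$. The cubic current--field term $A|u|^{2}\times B$ is treated analogously, pairing $A \in L_{T}^{2}L^{\infty}$ with $|u|^{2} \in L_{T}^{\infty}L^{2}$ and $B \in L_{T}^{\infty}L^{2}$ (from $\sigma>1$).

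The main obstacle is the ``quantum'' current term $\IM(\bar u\,\nabla u)\times B$, because $\nabla u$ lives only in $L_{T}^{\infty}L^{2}$ and $B$ is only marginally better than $L^{2}$, so the naive H\"older accounting just fails. To close this I would exploit the gain of integrability on $u$ supplied by Proposition \ref{th:apriori}: the borderline Sobolev embedding $W^{1/2,6}(\R^{3}) \hookrightarrow L^{q}(\R^{3})$ (valid for every finite $q \geq 6$) yields $u \in L_{T}^{2}L^{q}$ for each such $q$, while $H^{\sigma-1} \hookrightarrow L^{6/(5-2\sigma)}$ gives $B \in L_{T}^{\infty}L^{6/(5-2\sigma)}$. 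Choosing $q = 3/(\sigma-1) \in (18,\infty)$ balances the H\"older exponents, since $1/q + 1/2 + (5-2\sigma)/6 = 1$, so that
\begin{equation*}
\|\bar u\,\nabla u\times B\|_{L_{T}^{2}L^{1}} \;\lesssim\; \|u\|_{L_{T}^{2}L^{q}}\,\|\nabla u\|_{L_{T}^{\infty}L^{2}}\,\|B\|_{L_{T}^{\infty}L^{6/(5-2\sigma)}} \;<\; \infty.
\end{equation*}
Summing the four contributions gives $F_{L} \in L_{T}^{2}L^{1}$. The strict requirement $\sigma>1$ is exactly what keeps the H\"older exponent $q$ finite; this is the precise mechanism by which a little extra regularity on $A$ is converted into meaningfulness of the Lorentz force, and explains why the argument collapses at the finite-energy threshold $\sigma=1$.
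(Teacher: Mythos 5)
Your argument is correct and is essentially the paper's own proof: the authors likewise dispose of $\rho E$ via $\rho,E\in L_T^{\infty}L^2$, bound $A|u|^2\times B$ by a crude H\"older estimate, and close the critical term $\IM(\bar u\nabla u)\times B$ by pairing $\nabla u\in L_T^{\infty}L^2$ with $B\in L_T^{\infty}L^p$ for some $p(\sigma)>2$ and $u\in L_T^2L^{2p/(p-2)}$ coming from the $L_T^2W^{1/2,6}$ bound of Proposition \ref{th:apriori}. Your explicit exponents $q=3/(\sigma-1)$ and $6/(5-2\sigma)$ are exactly the paper's choice written out, so there is nothing to add.
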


\begin{proof}
It is easy to check that $\rho,E\in L_T^{\infty}L^2$, whence $\rho E\in L_T^2L^1$. Moreover, owing to Proposition \ref{th:apriori}, we have $u\in L_T^2W^{1/2,6}$. Since $B\in L_T^{\infty}L^{p}$ for some $p:=p(\sigma)>2$, we deduce that
\begin{equation*}
\begin{split}
\|J\times B\|_{L_T^2L^1}&\lesssim_T\|A\|_{L_T^{\infty}L^6}\|u\|_{L_T^{\infty}L^6}^2\|B\|_{L_T^{\infty}L^2}\\
&\quad+\|u\|_{L_T^{2}L^{2p/(p-2)}}\|\nabla u\|_{L_T^{\infty}L^2}\|B\|_{L_T^{\infty}L^p}\\
&\lesssim\|A\|_{L_T^{\infty}L^6}\|u\|_{L_T^{\infty}L^6}^2\|B\|_{L_T^{\infty}L^2}\\
&\quad+\|u\|_{L_T^{2}W^{1/2,6}}\|\nabla u\|_{L_T^{\infty}L^2}\|B\|_{L_T^{\infty}L^p}\lesssim 1,
\end{split}
\end{equation*}
which concludes the proof.
\end{proof}

\begin{remark}
Given $\gamma\in(1,4)$, and an initial data $(u_0,A_0,A_1)\in M^{1,\sigma}$, with $\sigma\in(1,\frac76)$, the existence of a global weak $M^{1,\sigma}$-solution to \eqref{eq:MS} follows by the existence of a global finite energy weak solution, as proved in \cite{Antonelli-Damico-Marcati}, and the persistence of regularity implied by Proposition \ref{th:apriori}. Moreover, Proposition \ref{orc} guarantees that this solution has a well-defined Lorentz force $F_L\in L^{2}_{\mathrm{loc}}(\R^+;L^1(\R^3))$.
\end{remark}

We conclude this Section with a further a priori estimate for weak $M^{2,\sigma}$-solutions to \eqref{eq:MS}. It will be useful in order to obtain the local well-posedness result for the admissible regime $\sigma\in[\frac43,3)$. Here we do not require linearity in the $H^2$-norm of $u$, in fact we can take an arbitrary $\gamma>1$.

\begin{proposition}\label{th:perdi}
Fix $\gamma>1$, $\sigma\in(1,3)$, and $T>0$. Let $(u,A)$ be a weak solution to \eqref{eq:MS}, with $(u,A,\partial_t A)\in L_T^{\infty}M^{2,1}$ and with initial data $(u_0,A_0,A_1)\in M^{2,\sigma}$. Then $A\in\mathcal{C}([0,T],H^{\sigma})\cap \mathcal{C}^1([0,T],H^{\sigma-1})$, and for every Klein-Gordon admissible pair $(q,r)$ we have the bound
\begin{equation}\label{eq:perdi}
\begin{split}
&\|(A,\partial_tA)\|_{L^q_T(W^{\sigma-2/q,r}\times W^{\sigma-2/q-1,r})}\\
&\qquad\qquad\lesssim_{\langle T\rangle^n}\langle\|A\|_{L_T^{\infty}H^1}^m\rangle\langle\|(A_0,A_1)\|_{\Sigma^{\sigma}}^m\rangle\langle\|u\|_{L_T^{\infty}H^2}^{m}\rangle.
\end{split}
\end{equation}
\end{proposition}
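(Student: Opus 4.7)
The strategy is to apply the Klein--Gordon Strichartz estimates of Lemma~\ref{le:uno} to the equivalent equation $(\square+1)A=\mathbb{P}J+A$ at regularity $s=\sigma$, choosing the dual pair $(q_0,r_0)=(\infty,2)$. This reduces the task to estimating the source $\mathbb{P}J+A$ in $L_T^1 H^{\sigma-1}$. As a preliminary step I would invoke Proposition~\ref{th:apriori} with $s=2$ to secure the bounds $A\in L_T^\infty H^{\tilde\sigma}\cap L_T^2 L^\infty$ (with $\tilde\sigma=\min\{\sigma,7/6\}$) and the smoothing gain $u\in L_T^2 W^{3/2,6}$, both dominated by the right-hand side of \eqref{eq:perdi}.

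Splitting $\mathbb{P}J=2\mathbb{P}(\IM(\bar u\nabla u))-2\mathbb{P}(A|u|^2)$, the first piece is estimated via Lemma~\ref{le:tre}, which crucially avoids putting extra derivatives on $\nabla u$. With the pair $1/p_i+1/q_i=1/2$ chosen depending on $\sigma$, bare Sobolev embeddings from $u\in L_T^\infty H^2$ suffice for $\sigma\in(1,5/2]$; in the delicate range $\sigma\in(5/2,3)$ I would pair $\|u\|_{L_T^\infty W^{\sigma-1,p}}$ (for $p$ slightly below $3/(\sigma-3/2)$) with $\|\nabla u\|_{L_T^2 L^q}$ (for the dual $q>6$), the latter coming from $\nabla u\in L_T^2 W^{1/2,6}\hookrightarrow L_T^2 L^q$ for every finite $q$, at the cost of a factor $T^{1/2}$ absorbed when passing from $L_T^2$ to $L_T^1$. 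The quadratic piece $\mathbb{P}(A|u|^2)$ is handled by the fractional Leibniz rule \eqref{frac_leibniz} together with $u\in L^\infty$ (from $H^2\hookrightarrow L^\infty$ in dimension three) and the $A$-bounds from Proposition~\ref{th:apriori}.

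The $A$-term in the source, bounded by $T\|A\|_{L_T^\infty H^{\sigma-1}}$, together with the $\|A\|_{H^{\sigma-1}}$ factor appearing in the $\mathbb{P}(A|u|^2)$ estimate, force a bootstrap in $\sigma$: only $\|A\|_{L_T^\infty H^1}$ is directly available from the hypothesis, then upgraded to $H^{\tilde\sigma}$ by Proposition~\ref{th:apriori}. I would iterate the above Strichartz estimate at regularities $\sigma_{k+1}=\min\{\sigma,\sigma_k+1\}$, each step being valid since $\sigma_{k+1}-1\le\sigma_k$; finitely many iterations close the bootstrap because $\sigma<3$. The main obstacle is precisely this upper threshold: for $\sigma$ near $3$, naive Sobolev products from $u\in H^2$ are insufficient to control $\mathbb{P}(\bar u\nabla u)$ in $H^{\sigma-1}$, and one genuinely needs the Koch--Tzvetkov smoothing gain encoded in Proposition~\ref{th:apriori} to close the estimate. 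Finally, the continuity $A\in\mathcal{C}([0,T],H^\sigma)\cap\mathcal{C}^1([0,T],H^{\sigma-1})$ follows from the Strichartz bound at the endpoint pair $(q,r)=(\infty,2)$ by standard density/approximation arguments.
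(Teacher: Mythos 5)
Your proposal follows essentially the same route as the paper: Klein--Gordon Strichartz (Lemma \ref{le:uno}) with the dual pair $(\infty,2)$ reducing matters to an $L_T^1H^{\sigma-1}$ bound on $\mathbb{P}J+A$, Lemma \ref{le:tre} for the current, the smoothing gain $u\in L_T^2W^{3/2,6}$ to reach the range $\sigma>5/2$, and a bootstrap in $\sigma$ (the paper does it in two steps, first $\sigma=2$, then $\sigma\in(2,3)$), with the exponent bookkeeping matching the paper's choices. One caveat: you cannot literally \emph{invoke} Proposition \ref{th:apriori}, which is stated only for $\gamma\in(1,4)$, whereas Proposition \ref{th:perdi} covers all $\gamma>1$; the paper instead \emph{re-runs} that proof allowing superlinear dependence on $\|u\|_{L_T^\infty H^2}$ (harmless here, since \eqref{eq:perdi} tolerates $\langle\|u\|_{L_T^\infty H^2}^m\rangle$), so that for $\gamma\ge 4$ the power nonlinearity is controlled via $H^2\hookrightarrow L^\infty$ rather than via the $H^1$-level bounds — you should make this adjustment explicit.
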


\begin{proof}
Preliminarily we observe that, arguing as in the proof of Theorem \ref{th:apriori}, and allowing all the estimates to depend super-linearly on the $H^2$-norm of $u$, we obtain
\begin{equation}\label{eq:drx}
\|(u,A)\|_{L_T^2(W^{3/2,6}\times L^{\infty})}\lesssim_{\langle T\rangle^n}\langle\|A\|_{L_T^{\infty}(H^1)}^m\rangle\langle\|(A_0,A_1)\|_{\Sigma^{\sigma}}^m\rangle\langle\|u\|_{L_T^{\infty}H^2}^{m}\rangle.
\end{equation}

Next, applying Lemma \ref{le:uno} we get the bound
\begin{equation}\label{nev:end}
\begin{split}
&\|(A,\partial_tA)\|_{L^q_T(W^{\sigma-2/q,r}\times W^{\sigma-2/q-1,r})}\\
&\qquad\qquad\lesssim\|(A_0,A_1)\|_{\Sigma^{\sigma}}+\|A\|_{L_T^1H^{\sigma-1}}+\|\mathbb{P}J\|_{L_T^1H^{\sigma-1}},
\end{split}
\end{equation}
for any Klein-Gordon admissible pair $(q,r)$. 

We assume first $\sigma\in(1,2]$. In this case, the second terms in the r.h.s.~of \eqref{nev:end} is controlled by $\|A\|_{L_T^{\infty}H^1}$. Let us consider the third term. Using Lemma \ref{le:tre} and the fractional Leibniz rule \eqref{frac_leibniz} we get
\begin{equation}\label{te_filo}
\begin{split}
\|\mathbb{P}J\|_{L_T^1H^{\sigma-1}}&\lesssim_{\langle T\rangle^n}\|u\|_{L_T^{\infty}W^{\sigma-1,3}}\|\nabla u\|_{L_T^2L^6}\\
&\qquad\quad+\|A\|_{L_T^{\infty}H^1}\|u\|^2_{L_T^{\infty}L^{\infty}}+\|A\|_{L_T^{\infty}L^3}\|u\|_{L_T^{\infty}W^{1,6}}\|u\|_{L_T^{\infty}L^{\infty}}\\
&\lesssim_{\langle T\rangle^n}\langle\|A\|_{L_T^{\infty}H^1}\rangle\langle\|u\|_{L_T^{\infty}H^2}^2\rangle,
\end{split}
\end{equation}
which combined with \eqref{nev:end} proves the estimate \eqref{eq:perdi}. Moreover, Lemma \ref{le:uno} gives also $A\in\mathcal{C}([0,T],H^{\sigma})\cap \mathcal{C}^1([0,T],H^{\sigma-1})$.

Consider now the case $\sigma\in(2,3)$. Since we have proved already \eqref{eq:perdi} when $\sigma=2$, in particular we get
\begin{equation}\label{mus_stud}
\|A\|_{L_T^{\infty}H^{\sigma-1}}\lesssim\|A\|_{L_T^{\infty}H^{2}}\lesssim_{\langle T\rangle^n}\langle\|A\|_{L_T^{\infty}H^1}^m\rangle\langle\|(A_0,A_1)\|_{\Sigma^{\sigma}}^m\rangle\langle\|u\|_{L_T^{\infty}H^2}^m\rangle.
\end{equation}
Using Lemma \ref{le:tre}, the fractional Leibniz rule \eqref{frac_leibniz}, and estimates \eqref{eq:drx}, \eqref{mus_stud} we deduce
\begin{equation}\label{re_filo}
\begin{split}
\|\mathbb{P}J\|_{L_T^1H^{\sigma-1}}&\lesssim_{\langle T\rangle^n} \|u\|_{L_T^{\infty}W^{\sigma-1,6/(2\sigma-3)}}\|\nabla u\|_{L_T^2L^{3/(3-\sigma)}}\\
&\qquad\quad+\|A\|_{L_T^{\infty}H^{\sigma-1}}\|u\|^2_{L_T^{\infty}L^{\infty}}\\
&\qquad\quad+\|A\|_{L_T^{2}L^{\infty}}\|u\|_{L_T^{\infty}H^{\sigma-1}}\|u\|_{L_T^{\infty}L^{\infty}}\\
&\lesssim_{\langle T\rangle^n} \|u\|_{L_T^{\infty}H^2}\|\nabla u\|_{L_T^2W^{3/2,6}}\\
&\qquad\quad+(\|A\|_{L_T^{\infty}H^{\sigma-1}}+\|A\|_{L_T^{2}L^{\infty}})\|u\|^2_{L_T^{\infty}H^2}\\
&\lesssim_{\langle T\rangle^n}\langle\|A\|_{L_T^{\infty}H^1}^m\rangle\langle\|(A_0,A_1)\|_{\Sigma^{\sigma}}^m\rangle\langle\|u\|_{L_T^{\infty}H^2}^m\rangle.
\end{split}
\end{equation}
Combining estimates \eqref{nev:end}, \eqref{mus_stud} and \eqref{re_filo} we deduce \eqref{eq:perdi}. Again, Lemma \ref{le:uno} gives also $A\in\mathcal{C}([0,T],H^{\sigma})\cap \mathcal{C}^1([0,T],H^{\sigma-1})$, which completes the proof.
\end{proof}

\section{Local well-posedness}
In this Section we prove local well-posedness in $M^{2,\sigma}$, $\sigma\in[\frac43,3)$, for the Cauchy problem associated to the non-linear Maxwell-Schr\"odinger system \eqref{eq:MS}, with $\gamma>1$. The proof is based on a fixed point argument, inspired by \cite{Nakamura-Wada-Local,Nakamura-Wada-MaxwSchr_CMP2007}, where the authors studied the solution theory for the classical Maxwell-Schr\"odinger system \eqref{eq:MSC}, and by the recent paper \cite{Antonelli-Damico-Marcati}, where the authors proved local well-posedness in $M^{2,\frac32}$ for \eqref{eq:MS}, when $\gamma>2$. Here, in addition, we implement Kato's idea \cite{Kato-NLS} (see also \cite[Section 4.8]{cazenave}) to differentiate the Schr\"odinger equation once in time, in order to handle also the case $\gamma\in(1,2]$, and then to recover the $H^2$-regularity from the equation. Moreover, we exploit the a priori estimates of Section 3 in order to cover the whole range $\sigma\in[\frac43,3)$.

We state the main result of this Section.

\begin{theorem}[Local well-posedness]\label{th:LWP}
Fix $\gamma>1$ and $\sigma\in[\frac43,3)$. For any given initial data $(u_0,A_0,A_1)\in M^{2,\sigma}$, there exists a maximal time $T_{max}\in(0,+\infty]$, and a unique (maximal) solution $(u,A)$ to \eqref{eq:MS}, with $(u,A,\partial_t A)\in\mathcal{C}([0,T_{max}),M^{2,\sigma})$. Moreover
\begin{itemize}
\item[(i)] The blow-up alternative holds true, i.e., if $T_{max}<\infty$, then
$$\lim_{t\uparrow T_{max}}\|(u,A,\partial_t A)(t)\|_{M^{2,\sigma}}=+\infty.$$
\item[(ii)] There is continuous dependence on the initial data. Namely, the map $(u_0,A_0,A_1)\mapsto T_{max}$ is lower semicontinuous from $M^{2,\sigma}$ to $\R^+$, and for every $T\in(0,T_{max})$ the flow map $(u_0,A_0,A_1)\mapsto (u,A,\partial_t A)$ is continuous from $M^{2,\sigma}$ to $\mathcal{C}([0,T],M^{2,\sigma})$.
\item[(iii)] The charge $\mathcal{Q}(t):=\|u(t)\|_{L_2}^2$, and the energy $\mathcal{E}(t)$ defined by \eqref{def:energy} are conserved, i.e.~$\mathcal{Q}(t)=\mathcal{Q}(0)$ and $\mathcal{E}(t)=\mathcal{E}(0)$ for every $t\in(0,T_{max})$.
\end{itemize}
\end{theorem}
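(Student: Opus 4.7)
My plan is to run a contraction mapping argument in the Nakamura--Wada spirit, adapted to the nonlinear case via a Kato-type time-differentiation device. The main obstruction to applying the Antonelli--D'Amico--Marcati scheme uniformly in $\gamma>1$ is that, for $\gamma\in(1,2]$, the map $z\mapsto |z|^{\gamma-1}z$ is not $C^2$, so one cannot directly estimate two spatial derivatives of $u$ from the equation. Following Kato, I would introduce $v:=\partial_t u$, which formally satisfies
\begin{equation*}
i\partial_t v=-\Delta_A v+\phi v+\tfrac{\gamma+1}{2}|u|^{\gamma-1}v+\tfrac{\gamma-1}{2}|u|^{\gamma-3}u^2\bar v+(\partial_t\phi)\,u+R(u,A,\partial_t A),
\end{equation*}
where $R$ collects the terms produced by differentiating $\Delta_A$ in time. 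The crucial point is that the nonlinearity now enters only through coefficients of modulus $\lesssim |u|^{\gamma-1}$, which is manageable in $H^1$ without any smoothness assumption on the power. The initial datum $v(0)=-i\bigl(-\Delta_{A_0}u_0+\phi(u_0)u_0+|u_0|^{\gamma-1}u_0\bigr)$ belongs to $L^2$ thanks to $(u_0,A_0)\in H^2\times H^\sigma$. I would therefore search for a fixed point for $(u,v,A,\partial_t A)$ in an appropriate subspace of $\mathcal{C}([0,T],H^1\times L^2\times\Sigma^\sigma)$, and recover the full $H^2$ regularity of $u$ at the end from the identity $\Delta_A u=iv-\phi u-|u|^{\gamma-1}u$ combined with Lemma \ref{le:quattro}.

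Given an iterate $(A^{(n)},\partial_t A^{(n)})$, I would first solve the linearized magnetic Schr\"odinger equations for $(u^{(n+1)},v^{(n+1)})$ using the evolution semigroup constructed in \cite{Nakamura-Wada-Local}, with the nonlinear terms handled via Lemma \ref{esti_conv} and Lemma \ref{esti_pure}; then I would solve the Klein--Gordon equation $\square A^{(n+1)}=\mathbb{P}J(u^{(n+1)},A^{(n+1)})$ using Lemma \ref{le:uno}, with the coupling term treated by Lemma \ref{le:tre} exactly as in the proof of Proposition \ref{th:perdi}. The fixed-point norm must also encode the mixed space--time controls $u\in L_T^2 W^{3/2,6}$ and $A\in L_T^2 L^\infty$ provided by Lemma \ref{le:quattordici} and Proposition \ref{th:perdi}, since these are needed to close the estimate for $\mathbb{P}J$ when $\sigma$ is close to $3$. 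For $T$ small enough, the resulting bounds yield a strict contraction on a suitable ball, producing a unique local solution.

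Once local existence is established, the maximal existence time $T_{\max}$ is obtained by the standard concatenation argument, and the blow-up alternative (i) follows by observing that, if $\|(u,A,\partial_t A)(t)\|_{M^{2,\sigma}}$ stayed bounded on $[0,T_{\max})$ with $T_{\max}<\infty$, the a priori bound of Proposition \ref{th:perdi} together with the uniform local existence time from the contraction would allow restarting the flow past $T_{\max}$, a contradiction. Continuous dependence (ii) is a direct consequence of the contraction estimate applied to differences of solutions. For (iii), the regularity $(u,A,\partial_tA)\in\mathcal{C}([0,T_{\max}),M^{2,\sigma})$ is enough to justify the formal pairings of the Schr\"odinger equation with $\bar u$ (giving charge conservation) and of the coupled system with $(\partial_t\bar u,\partial_t A)$ together with integration against $\nabla\phi$ (giving energy conservation).

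The hard part will be closing the contraction uniformly in $\sigma\in[\tfrac43,3)$: one must simultaneously propagate $H^\sigma$ regularity of $A$, $H^2$ regularity of $u$, and the dispersive norms needed to estimate $\mathbb{P}J$ without derivative loss, and do so for a nonlinearity which, when $\gamma\in(1,2]$, fails to be $C^2$. Kato's trick, combined with the mixed-norm machinery developed in Section \ref{sect:apri} (in particular Lemma \ref{le:tre} which avoids the harmful endpoint term in the fractional Leibniz rule for $\mathbb{P}J$), is precisely what enables both difficulties to be handled within the same fixed-point framework.
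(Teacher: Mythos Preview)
Your proposal is correct in spirit and shares the essential mechanism with the paper's proof---both rely on Kato's time-differentiation device to avoid placing two spatial derivatives on the power nonlinearity when $\gamma\in(1,2]$, and both use the magnetic propagator together with Klein--Gordon Strichartz for $A$. The organization, however, differs in two notable respects.

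First, rather than introducing $v=\partial_t u$ as a separate unknown in the fixed-point space, the paper packages the time-differentiation into Lemma~\ref{le:linear_propagator}, which directly yields $W^{1,\infty}_T L^2\cap L^\infty_T H^2$ bounds for solutions of $i\partial_t u=-\Delta_A u+F$ in terms of $\|F\|_{W^{1,1}_T L^2}$. The contraction is then run for the pair $(u,A)$ in a ball of $W^{1,\infty}_T L^2\cap L^\infty_T H^2$ times a suitable space for $A$, but with respect to a \emph{weak} metric $\|u^{(1)}-u^{(2)}\|_{L^\infty_T L^2}+\|A^{(1)}-A^{(2)}\|_{L^4_T L^4}$. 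This sidesteps any need to estimate differences in strong norms, which in your setup would require controlling $v^{(1)}-v^{(2)}$ in $L^2$ and hence differentiating the nonlinearity once more---not impossible, but more delicate. Second, the paper does \emph{not} attempt to close the contraction uniformly over $\sigma\in[\tfrac43,3)$. Instead it proves the fixed point only at $\sigma=\tfrac43$ (where the space $W^{1,6}_T L^3$ for $\partial_t A$ is exactly what Lemma~\ref{le:linear_propagator} requires), and then invokes the persistence-of-regularity estimate of Proposition~\ref{th:perdi} to upgrade to any $\sigma\in(\tfrac43,3)$ a posteriori. This decouples the $H^2$-well-posedness from the higher $\Sigma^\sigma$-regularity of $A$ and makes the dispersive norms $L^2_T W^{3/2,6}$, $L^2_T L^\infty$ unnecessary inside the contraction space. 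Your route of carrying all of $\Sigma^\sigma$ and the mixed norms through the fixed point is workable in principle but heavier; the paper's two-step approach buys a cleaner closed ball and a simpler contraction metric.
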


\begin{remark}
A similar well-posedness result holds true in $M^{s,\sigma}$, for $s\in[11/8,2]$ and $\sigma>1$, and for suitable ranges of $\gamma$ \cite[Chapter 2]{Andreas}. As already mentioned in the Introduction, instead, covering the finite energy case $s=\sigma=1$ is a challenging open question. 
\end{remark}

We start our discussion by proving suitable estimates for the solutions to the linear magnetic Schr\"odinger equation.
\begin{lemma}\label{le:linear_propagator}
Let $T>0$ and $A\in L_T^{\infty}H^1\cap W_T^{1,p}L^3$, for some $p\geq 1$, with $\diver A= 0$. Then, for every $t_0\in[0,T]$, $f\in H^2(\R^3)$, and $F\in W_T^{1,1}L^2$, the Cauchy problem
\begin{equation}\label{eq:magn_nh}
\begin{cases}
i\partial_t u=-\Delta_A u + F\\
u(t_0,\cdot)=f.
\end{cases}
\end{equation}
has a unique solution $u\in\mathcal{C}([0,T];H^2)\cap C^{1}([0,T],L^2)$, which satisfies
\begin{equation}\label{da_int}
\begin{split}
&\|u\|_{W_T^{1,\infty}L^2}\lesssim\Big(\langle\|A_0\|_{H^1}^m\rangle\|u_0\|_{H^2}+\|F(0)\|_{L^2}+\|F\|_{W_T^{1,1}L^2}\\
&\qquad+\|\partial_t A\|_{L_T^pL^3}\langle\|A\|_{H^1}^m\rangle\|F\|_{L_T^{p'}L^2}\Big)e^{\|\partial_t A\|_{L_T^1L^3}\langle\|A\|_{L_T^{\infty}H^1}^m\rangle},
\end{split}
\end{equation}
\begin{equation}\label{da_mil}
\begin{split}
&\|u\|_{L_T^{\infty}H^2}\lesssim\Big(\langle\|A_0\|_{H^1}^m\rangle\|u_0\|_{H^2}+\|F(0)\|_{L^2}+\|F\|_{W_T^{1,1}L^2}\\
&\qquad+\|\partial_t A\|_{L_T^pL^3}\langle\|A\|_{H^1}^m\rangle\|F\|_{L_T^{p'}L^2}\Big)\langle\|A\|_{L_T^{\infty}H^1}^m\rangle e^{\|\partial_t A\|_{L_T^1L^3}\langle\|A\|_{L_T^{\infty}H^1}^m\rangle}.
\end{split}
\end{equation}
\end{lemma}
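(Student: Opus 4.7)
The plan is to follow Kato's trick for semilinear Schrödinger equations (see \cite{Kato-NLS} or \cite[Section 4.8]{cazenave}): differentiate the Schrödinger equation once in time to obtain a forced linear magnetic Schrödinger equation for $v:=\partial_t u$, close an $L^2$ energy estimate for $v$ via Grönwall, and then recover $H^2$ control of $u$ by reading it off the equation and invoking the norm equivalence in Lemma~\ref{le:quattro}. Using the Coulomb condition $\diver A=0$ we expand $-\Delta_A u=-\Delta u+2iA\cdot\nabla u+|A|^2u$ and differentiate in $t$ to get
\begin{equation*}
i\partial_t v=-\Delta_A v+2i(\partial_t A)\cdot\nabla u+2(A\cdot\partial_t A)\,u+\partial_t F,
\end{equation*}
with Cauchy data $v(t_0)=-i\Delta_A f+iF(t_0)$.

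Since $-\Delta_A$ is self-adjoint on $L^2$, pairing against $\bar v$ and taking imaginary parts kills the $\Delta_A v$ contribution, giving $\tfrac{d}{dt}\|v\|_{L^2}\le\|G\|_{L^2}$ for the forcing $G$ above. By Hölder and Sobolev embedding ($H^2\hookrightarrow W^{1,6}$, $H^1\hookrightarrow L^6$), one estimates
\begin{equation*}
\|(\partial_t A)\cdot\nabla u\|_{L^2}+\|A\,\partial_t A\,u\|_{L^2}\lesssim\|\partial_t A\|_{L^3}\langle\|A\|_{H^1}^m\rangle\,\|u\|_{H^2}.
\end{equation*}
Next, rewriting the equation as $-\Delta_A u=iv-F$ and applying Lemma~\ref{le:quattro} yields
\begin{equation*}
\|u\|_{H^2}\lesssim\langle\|A\|_{H^1}^m\rangle\bigl(\|u\|_{L^2}+\|v\|_{L^2}+\|F\|_{L^2}\bigr),
\end{equation*}
so the forcing term is bounded by $\|\partial_t A\|_{L^3}\langle\|A\|_{H^1}^m\rangle(\|u\|_{L^2}+\|v\|_{L^2}+\|F\|_{L^2})+\|\partial_t F\|_{L^2}$. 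Since $\|u\|_{L_T^\infty L^2}$ is controlled by $\|f\|_{L^2}+\|F\|_{L_T^1 L^2}$ via the standard charge estimate, and since $v(t_0)$ obeys $\|v(t_0)\|_{L^2}\lesssim\langle\|A(t_0)\|_{H^1}^m\rangle\|f\|_{H^2}+\|F(t_0)\|_{L^2}$, Grönwall's inequality with weight $\|\partial_t A(\cdot)\|_{L^3}\langle\|A\|_{L_T^\infty H^1}^m\rangle\in L^1_T$ produces the exponential factor $e^{\|\partial_t A\|_{L_T^1 L^3}\langle\|A\|_{L_T^\infty H^1}^m\rangle}$. The Duhamel contribution containing $F$ (and not $\partial_t F$) is handled by a Hölder split in time: $\int_{t_0}^t\|\partial_t A(s)\|_{L^3}\|F(s)\|_{L^2}\,ds\le\|\partial_t A\|_{L_T^p L^3}\|F\|_{L_T^{p'}L^2}$, which is precisely the term appearing in \eqref{da_int}. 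Feeding the resulting bound on $\|v\|_{L_T^\infty L^2}$ back into the $H^2$-recovery inequality gives \eqref{da_mil}, with the advertised additional factor $\langle\|A\|_{L_T^\infty H^1}^m\rangle$.

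The main obstacle is existence in the low-regularity class of $A$: one cannot apply the magnetic free propagator directly since a priori no such propagator is available at the regularity $A\in L_T^\infty H^1\cap W_T^{1,p}L^3$. The plan is to regularize, replacing $A$ by a mollified $A_\eps$ (smooth in $x$ and $t$), for which standard Kato theory of linear evolution equations yields a unique solution $u_\eps\in\mathcal{C}([0,T];H^2)\cap\mathcal{C}^1([0,T];L^2)$ with a generator enjoying a common dense domain. The a priori bounds derived above are uniform in $\eps$, so one extracts a weak-$*$ limit $u\in L_T^\infty H^2\cap W_T^{1,\infty}L^2$ satisfying \eqref{eq:magn_nh} in the sense of distributions, then upgrades to strong continuity in time by the usual weak-continuity-plus-norm-continuity argument once one verifies continuity of $\|v(t)\|_{L^2}$ (which follows from the energy identity). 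Uniqueness is clean: the difference $w$ of two solutions with the same data solves the linear equation with $F=0$ and $w(t_0)=0$, and the $L^2$ energy identity (using only self-adjointness of $-\Delta_A$, which requires no regularity of $A$ beyond $L^2_{\mathrm{loc}}$) gives $w\equiv 0$. The subtle point in the limit procedure is the passage to the limit in $A_\eps\cdot\nabla u_\eps$ and $|A_\eps|^2 u_\eps$, which is achieved by combining weak convergence of $u_\eps$ in $H^2$ with strong convergence $A_\eps\to A$ in $L_T^\infty L^6\cap L_T^p L^3$ obtained from standard mollifier estimates.
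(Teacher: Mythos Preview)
Your proposal is correct and follows essentially the same approach as the paper: differentiate the equation in time, apply the $L^2$ energy method to $v=\partial_t u$, convert $\|u\|_{H^2}$ back into $\|u\|_{L^2}+\|v\|_{L^2}+\|F\|_{L^2}$ via Lemma~\ref{le:quattro}, and close with Gr\"onwall; existence at the stated regularity of $A$ is obtained by regularization and compactness, exactly as the paper indicates. The only cosmetic difference is that the paper keeps the commutator term in the compact form $2i(\partial_t A)\cdot\nabla_A u$ rather than expanding it as $2i(\partial_t A)\cdot\nabla u+2(A\cdot\partial_t A)u$, which leads to the same estimate.
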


\begin{proof}
We assume in the proof that $A$ and $F$ are smooth enough, in which case the existence of a solution $u\in\mathcal{C}([0,T];H^2)\cap C^{1}([0,T],L^2)$ is guaranteed by Kato's abstract evolution method \cite{Kato-abstract-1,Kato-abstract-2}. Hence, we only need to prove estimates \eqref{da_int} and \eqref{da_mil}. The general case follows by a standard compactness argument (see the proof of \cite[Lemma 3.1]{Nakamura-Wada-Local} for more details).

Multiplying the equation by $\bar{u}$, integrating by parts and using the self-adjointness of $-\Delta_A$ we deduce the bound $\partial_t\|u\|_{L^2}^2\lesssim\|F\|_{L^2}\|u\|_{L^2}$, which implies 
\begin{equation}\label{bou:una}
\|u\|_{L_T^{\infty}L^2}\lesssim\|u_0\|_{L^2}+\|F\|_{L_T^1L^2}.
\end{equation}
Next, we write the equation for $\partial_t u$, which reads
\begin{equation}\label{eq:dtu}
i\partial_t (\partial_t u)=-\Delta_A(\partial_t u)+2i\partial_t A\cdot\nabla_A u+\partial_t F.
\end{equation}
The energy method applied to \eqref{eq:dtu} yields
\begin{equation}\label{ramin}
\begin{split}
\|\partial_t u\|_{L_T^{\infty}L^2}&\lesssim \|(\partial_t u)(0)\|_{L^2}+\|\partial_tA\cdot\nabla_A u+\partial_t F\|_{L_T^1L^2}.\\
&\lesssim \|\Delta_{A(0)}u(0)\|_{L^2}+\|F(0)\|_{L^2}+\|\partial_tA\cdot\nabla_A u+\partial_t F\|_{L_T^1L^2}.
\end{split}
\end{equation}
Using \eqref{bou:una}, \eqref{ramin} and the equivalence of norms \eqref{equi_ma_cla} we get
\begin{equation}\label{da_ric}
\|u\|_{W_T^{1,\infty}L^2}\lesssim\langle\|A_0\|_{H^1}^m\rangle\|u_0\|_{H^2}+\|F(0)\|_{L^2}+\|F\|_{W_T^{1,1}L^2}+\|\partial_tA\cdot\nabla_Au\|_{L_T^1L^2}.
\end{equation}
Moreover, we have
\begin{equation*}
\begin{split}
&\|\partial_tA\cdot\nabla_Au\|_{L_T^1L^2}\lesssim\int_0^T\|\partial_t A\|_{L^3}\|\nabla_A u\|_{L^6}dt\lesssim\int_0^T\|\partial_t A\|_{L^3}\langle\|A\|_{H^1}^m\rangle\|u\|_{H^2}dt\\
&\lesssim\int_0^T\|\partial_t A\|_{L^3}\langle\|A\|_{H^1}^m\rangle\big(\|u\|_{L^2}+\|\partial_t u\|_{L^2}+\|F\|_{L^2}\big)dt\\
&\lesssim\int_0^T\|\partial_t A\|_{L^3}\langle\|A\|_{H^1}^m\rangle(\|u\|_{L^2}+\|\partial_t u\|_{L^2})dt+\|\partial_t A\|_{L_T^pL^3}\langle\|A\|_{H^1}^m\rangle\|F\|_{L^{p'}L^2}.
\end{split}
\end{equation*}
Combining the inequality above with \eqref{da_ric}, and applying the Gr\"onwall inequality we deduce the bound \eqref{da_int}. Using the equivalence of norms \eqref{equi_ma_cla} we also get
\begin{equation*}
\begin{split}
\|u\|_{L_T^{\infty}H^2}&\lesssim\langle\|A\|_{L_T^{\infty}H^1}^m\rangle\|u\|_{L_T^{\infty}H_A^2}\lesssim\langle\|A\|_{L_T^{\infty}H^1}^m\rangle(\|u\|_{W_T^{1,\infty}L^2}+\|F\|_{L_T^{\infty}L^2})\\
&\lesssim\langle\|A\|_{L_T^{\infty}H^1}^m\rangle(\|u\|_{W_T^{1,\infty}L^2}+\|F(0)\|_{L^2}+\|\partial_t F\|_{L_T^{1}L^2}),
\end{split}
\end{equation*}
which combined with \eqref{da_int} yields \eqref{da_mil}.
\end{proof}

As a consequence of Lemma \ref{le:linear_propagator} we can define, for $A\in L_T^{\infty}H^1\cap W_T^{1,1}L^3$, and for every $t,t_0\in[0,T]$, the linear magnetic propagator $\mathcal{U}_A(t,t_0):H^2\to H^2$, by setting $\mathcal{U}_A(t,t_0)f:=u(t,\cdot)$, where $u$ is the solution to the Cauchy problem \eqref{eq:magn_nh} with $F=0$. The propagator $\mathcal{U}_A$ is a strongly continuous two-parameters $H^2$-semigroup, namely, 
\begin{itemize}
\item $\mathcal{U}_A(t,t)=\mathbb{I}$ for every $t\in[0,T]$.
\item $\mathcal{U}_A(t_1,t_3)=\mathcal{U}_A(t_1,t_2)\mathcal{U}_A(t_2,t_3)$, for every $t_1,t_2,t_3\in[0,T]$.
\item For every $f\in H^2$, the flow map $(t_1,t_2)\mapsto\mathcal{U}_A(t_1,t_2)f$ is continuous from $[0,T]^2$ to $H^2$.
\end{itemize}

Moreover, for every $t,t_0\in[0,T]$, we have $\|\mathcal{U}_A(t,t_0)f\|_{L^2}=\|f\|_{L^2}$, which implies that $\mathcal{U}(t,t_0)$ can be extended to a unitary operator on $L^2$. Interpolating with \eqref{da_int}, we deduce that  for every $s\in[0,2]$, $\mathcal{U}_A$ is a strongly continuous two-parameters $H^s$-semigroup, which satisfies the estimate
\begin{equation}\label{gia_int}
\|\mathcal{U}_A(t,t_0)f\|_{L_T^{\infty}H^s}\lesssim\|f\|_{H^s}\langle\|A\|^{2s}_{L_T^{\infty}H^1}\rangle e^{\frac s2\|\partial_t A\|_{L_T^1L^3}}.
\end{equation}

Last, we observe that $u$ is the solution to the inhomogeneous problem \eqref{eq:magn_nh} if and only if it satisfies the integral formula
\begin{equation}
u(t):=\mathcal{U}_A(t,t_0)f-i\int_{t_0}^t\mathcal{U}_A(t,\tau)F(u)(\tau)d\tau,
\end{equation}
as an identity in $L^2$, for every $t\in[0,T]$.

We are ready to prove the local well-posedness result.

\begin{proof}[Proof of Theorem \ref{th:LWP}]
First, we consider the case $\sigma=\frac43$. We are going to prove the existence of a local solution to \eqref{eq:MS} by means of a fixed point argument.

For $T>0$, $R_1,R_2>1$ to be chosen later, consider the space $Z$ defined by
\begin{equation*}
Z:=\left\{(u,A)\,\left|
\begin{array}{l}
u\in W_T^{1,\infty}L^2\cap L_T^{\infty}H^{2}, u(0)=u_0,\\
A\in L_T^{\infty}H^{4/3}\cap W_T^{1,\infty}H^{1/3}\cap W_T^{1,6}L^3,\,\diver A=0,\\
\|u\|_{W_T^{1,\infty}L^2\cap  L_T^{\infty}H^{2}}\leq R_1,\\
\|A\|_{ L_T^{\infty}H^{4/3}\cap W_T^{1,\infty}H^{1/3}\cap W_T^{1,6}L^3}\leq R_2
\end{array}
\right.
\right\},
\end{equation*}

endowed with the distance
\begin{equation*}
d\big((u^{(1)},A^{(1)}),(u^{(2)},A^{(2)})\big):=\|u^{(1)}-u^{(2)}\|_{L_T^{\infty}L^2}+\|A^{(1)}-A^{(2)}\|_{L_T^{4}L^4}.
\end{equation*}
Observe that, for $R_1\geq 2\|u_0\|_{H^2}$, the space $Z$ is non empty, as it contains the constant map $u_0$. Moreover, it is straightforward to see that $(Z,d)$ is a complete metric space. 

Let us fix the initial data $(u_0,A_0,A_1)\in M^{2,\sigma}$. Set $\mathcal{N}(u):=\phi u+|u|^{\gamma-1}u$, and consider the solution map $\Phi:(u,A)\mapsto(v,B)$, where
\begin{equation}\label{duha:u}
v(t):=\mathcal{U}_A(t,0)u_0-i\int_0^t\mathcal{U}_A(t,\tau)\mathcal{N}(u)(\tau)d\tau,
\end{equation}
\begin{equation}\label{duha:B}
B(t):=\cos(t\mathcal{D})A_0+\frac{\sin(t\mathcal{D})}{\mathcal{D}}A_1+\int_0^t \frac{\sin((t-\tau)\mathcal{D})}{\mathcal{D}}(\mathbb{P}J(u,A)+A)(\tau)d\tau.
\end{equation}
First we show that, for suitable choices of $T>0$, $R_1,R_2>1$, $\Phi$ maps $Z$ into itself. To this aim, let us fix $(u,A)\in Z$. Using Sobolev embedding and the Hardy-Littlewood-Sobolev inequality we get
\begin{equation}\label{nu:well}
\|\mathcal{N}(u)\|_{L_T^{\infty}L^2}\lesssim\|u\|_{L_T^{\infty}H^{2}}^3+\|u\|_{L_T^{\infty}H^{2}}^{\gamma}\lesssim  R_1^{m},
\end{equation}
\begin{equation}\label{der:nu:well}
\|\partial_t\,\mathcal{N}(u)\|_{L_T^{\infty}L^2}\lesssim\big(\|u\|_{L_T^{\infty}H^{2}}^2+\|u\|_{L_T^{\infty}H^{2}}^{\gamma-1}\big)\|\partial_t u\|_{L_T^{\infty}L^2}\lesssim  R_1^{m},
\end{equation}
\begin{equation}\label{non_iniz}
\|\mathcal{N}(u)(0)\|_{L_2}\lesssim \langle\|u_0\|_{H_2}^{m}\rangle.
\end{equation}
Using Lemma \ref{le:linear_propagator}, and estimates \eqref{nu:well}, \eqref{der:nu:well} and \eqref{non_iniz} we deduce that
\begin{equation}\label{fp:dtu}
\|v\|_{W_T^{1,\infty}L^2}\leq C_1\big(\langle\|A_0\|_{H^1}^m\rangle\langle\|u_0\|_{H^2}^m\rangle+TR_1^m+T^{5/6}R_1^mR_2^m\big)e^{T^{5/6}R_2^m},
\end{equation}
\begin{equation}\label{fp:hdue}
\|v\|_{L_T^{\infty}H^2}\leq C_1\big(\langle\|A_0\|_{H^1}^m\rangle\langle\|u_0\|_{H^2}^m\rangle+TR_1^m+T^{5/6}R_1^mR_2^m\big)R_2^me^{T^{5/6}R_2^m}.
\end{equation}
for some positive constant $C_1$.

Next, using Lemma \ref{le:uno} we get
\begin{equation}\label{me_fa}
\begin{split}
\|B&\|_{L_T^{\infty}H^{4/3}\cap W_T^{1,\infty}H^{1/3}\cap W_T^{1,6}L^3}\lesssim\|(A_0,A_1)\|_{\Sigma^{4/3}}+\|\mathbb{P}J(u,A)+A\|_{L_T^{1}H^{1/3}}\\
&\qquad\qquad\qquad\lesssim\|(A_0,A_1)\|_{\Sigma^{4/3}}+T\|\mathbb{P}J(u,A)+A\|_{L_T^{\infty}H^{1/3}}.
\end{split}
\end{equation}
Lemma \ref{le:tre} and the fractional Leibniz rule \eqref{frac_leibniz} yield
\begin{equation}\label{no_pi}
\begin{split}
&\|\mathbb{P}J(u,A)\|_{L_T^{\infty}H^{1/3}}\lesssim\|u\|_{L_T^{\infty}W^{1/3,3}}\|u\|_{L_T^{\infty}W^{1,6}}+\|A\|_{L_T^{\infty}W^{1/3,3}}\|u\|^2_{L_T^{\infty}L^{12}}\\
&\qquad+\|A\|_{L_T^{\infty}L^6}\|u\|_{L_T^{\infty}L^6}\|u\|_{L_T^{\infty}W^{1/3,6}}\lesssim \langle\|A\|_{L_T^{\infty}H^{4/3}}\rangle\|u\|_{L_T^{\infty}H^2}^2,
\end{split}
\end{equation}
which combined with \eqref{me_fa} gives
\begin{equation}\label{fp:B}
\|B\|_{L_T^{\infty}H^{4/3}\cap W_T^{1,\infty}H^{1/3}\cap W_T^{1,6}L^3}\leq C_2\big(\|(A_0,A_1)\|_{\Sigma^{4/3}}+TR_1^2R_2\big),
\end{equation}
for some positive constant $C_2$.

Let us choose $R_1:=4C_1\big\langle\|A_0\|_{H^1}^m\rangle\langle\|u_0\|_{H^2}^m\rangle$ and $R_2=2\langle\|(A_0,A_1)\|_{\Sigma^{4/3}}\rangle$. Using \eqref{fp:dtu}, \eqref{fp:hdue} and \eqref{fp:B}, and choosing $T>0$ sufficiently small (depending on $R_1,R_2$), we deduce that $\Phi$ maps $Z$ into itself. Moreover, arguing as in the proof of \cite[Proposition 3.1]{Antonelli-Damico-Marcati} we can prove that, after choosing $T$ possibly smaller (depending on $R_1,R_2$), $\Phi$ is a contraction on $Z$. Therefore, there exist a unique solution $(u,A)\in Z$ to the system \eqref{eq:MS}. Owing to estimates \eqref{nu:well}-\eqref{der:nu:well}, Lemma \ref{le:linear_propagator} yields $u\in\mathcal{C}([0,T],H^2)\,\cap\,\mathcal{C}^1([0,T],L^2)$. Similarly, estimates \eqref{no_pi} and Lemma \ref{le:uno} give $A\in\mathcal{C}([0,T],H^{\sigma})\cap\mathcal{C}^1([0,T],H^{\sigma-1})$. 

Next, observe that the unconditional uniqueness holds true. Suppose indeed that $\tilde{u}\in L_T^{\infty}H^2\cap W_T^{1,\infty}L^2$, $\tilde{A}\in L_T^{\infty}H^{4/3}\cap W_T^{1,\infty}H^{1/3}$ is a weak solution to \eqref{eq:MS}. By means of Proposition \ref{th:perdi}, we have also that $\tilde{A}\in W_T^{1,6}L^3$. Therefore, after choosing $T$ possibly smaller (depending on $R_1,R_2$), we have $(\tilde{u},\tilde{A})\in Z$, whence $(\tilde{u},\tilde{A})=(u,A)$.

Using uniqueness, we can consider the maximal solution, defined on a (maximal) time interval $[0,T_{max})$. The blow-up alternative easily follows from the fact that a lower bound on the local time of existence depends only on the $M^{2,\frac43}$-norm of the initial data.

In the remaining case $\sigma\in(\frac43,3)$, the existence of a unique maximal $M^{2,\sigma}$-solution to \eqref{eq:MS}, as well as the blow-up alternative, follow by the result for $\sigma=\frac43$ and the persistence of regularity implied by Proposition \ref{th:perdi}. 

Let us prove now that for every $\sigma\in[\frac43,3)$ the charge and energy are conserved. Indeed, taking the imaginary part of the identity
$$(i\partial_t u+\Delta_A u-\mathcal{N}(u),u)=0$$
we get $\partial_t\mathcal{Q}=0$, whence the conservation of charge. Similarly, taking the real part of the identity
$$(i\partial_t u+\Delta_A u-\mathcal{N}(u),\partial_t u)=0,$$
we deduce the conservation of energy.

Finally, for every $\sigma\in[\frac43,3)$, the continuous dependence on the initial data can be proved by adapting the argument in \cite[Proposition 3.2]{Antonelli-Damico-Marcati}. The proof is complete.
\end{proof}

\section{Global well-posedness}
This Section is devoted to the proof of our main result, Theorem \ref{th:main}. We start with the following lemma, which shows the finiteness in time of a suitable norm of the solutions to \eqref{eq:MS}, and will play a key role in the globalization argument.

\begin{lemma}\label{ove_pi}
Let $\gamma\in(1,4)$, $\sigma\in[\frac43,3)$. We fix an initial data $(u_0,A_0,A_1)\in M^{2,\sigma}$, and let $(u,A)$ be the maximal solution to \eqref{eq:MS}, with $(u,A,\partial_t A)\in\mathcal{C}([0,T_{max}),M^{2,\sigma})$. Then, for every $T\in(0,T_{max})$ we have the estimate
\begin{equation}\label{uniform_bound}
\|(u,A)\|_{L_T^2(W^{1/2,6}\times L^{\infty})}+\|(u,A,\partial_t A)\|_{L_T^{\infty}M^{1,7/6}}\lesssim_{\langle T\rangle^n} 1.
\end{equation}
\end{lemma}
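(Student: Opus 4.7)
The plan is to bootstrap from the energy-level conservation laws to the intermediate regularity bounds in the statement, using Proposition~\ref{th:apriori} as the main machine. In broad outline, I will first exhibit polynomial-in-$T$ control of $(u,A,\partial_t A)$ in $M^{1,1}$ from the conservation of charge and energy (together with the diamagnetic inequality), and then feed these bounds into Proposition~\ref{th:apriori} with $s=1$ and $\sigma=7/6$ to obtain simultaneously the $L^2_T L^\infty$-bound on $A$, the persistence of regularity $(A,\partial_t A)\in L^\infty_T\Sigma^{7/6}$, and the smoothing bound on $u$ in $L^2_T W^{1/2,6}$.

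For Step~1, conservation of charge gives $\|u\|_{L_T^\infty L^2}=\|u_0\|_{L^2}$, and conservation of the energy~\eqref{def:energy} supplies uniform bounds on $\|\nabla_A u\|_{L^2}$, $\|\partial_t A\|_{L^2}$, $\|\nabla A\|_{L^2}$, and $\|u\|_{L^{\gamma+1}}$. Since $A_0\in H^\sigma\hookrightarrow L^2$, integrating the bound on $\partial_t A$ gives $\|A\|_{L_T^\infty L^2}\lesssim\langle T\rangle$, whence $\|A\|_{L_T^\infty H^1}\lesssim\langle T\rangle$. To promote the magnetic gradient bound to an $H^1$-bound on $u$ itself, I would apply the diamagnetic inequality~\eqref{eq:diamag} together with Sobolev to obtain $\||u|\|_{L_T^\infty H^1}\lesssim 1$, hence $\|u\|_{L_T^\infty L^p}\lesssim 1$ for $p\in[2,6]$; then writing $\nabla u=\nabla_A u+iAu$ and controlling the lower-order piece via
$$\|Au\|_{L^2}\leq \|A\|_{L^3}\|u\|_{L^6}\lesssim \|A\|_{L^2}^{1/2}\|A\|_{L^6}^{1/2}\lesssim\langle T\rangle^{1/2},$$
yields $\|u\|_{L_T^\infty H^1}\lesssim_{\langle T\rangle^n} 1$. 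This is the only subtle point in the whole argument, since neither the conservation of energy nor the diamagnetic inequality alone pins down the full $H^1$-norm of $u$.

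For Step~3, I invoke Proposition~\ref{th:apriori} with $s=1$ and $\sigma=7/6$; since $\sigma\ge\frac43>\frac76$, the initial data trivially satisfy $(A_0,A_1)\in\Sigma^{7/6}$ with norm controlled by $\|(A_0,A_1)\|_{\Sigma^\sigma}$, and $\tilde\sigma=\min\{7/6,7/6\}=7/6$. Since $(u,A,\partial_t A)\in L_T^\infty M^{1,1}$ by Step~2, estimates~\eqref{eq:apriori_uno} and~\eqref{eq:apriori_due} apply and give
$$\|A\|_{L_T^2 L^\infty}+\|(A,\partial_t A)\|_{L_T^\infty \Sigma^{7/6}}+\|u\|_{L_T^2 W^{1/2,6}}\lesssim_{\langle T\rangle^n}1,$$
where on the right-hand side every input ($\|(u,A)\|_{L_T^\infty(H^1\times H^1)}$, $\|u\|_{L_T^\infty H^1}$, and $\|(A_0,A_1)\|_{\Sigma^{7/6}}$) is already controlled by~$\langle T\rangle^n$ from Step~2 and the fixed initial datum. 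Combined with the $L_T^\infty H^1$-bound on $u$ from Step~2, this proves~\eqref{uniform_bound}. The only real obstacle is the $H^1$-bootstrap in Step~2; once that is in place, everything reduces to a clean application of Proposition~\ref{th:apriori}.
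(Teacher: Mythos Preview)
Your proposal is correct and follows essentially the same route as the paper: conservation of charge and energy give polynomial-in-$T$ control of $\|(u,A,\partial_t A)\|_{L_T^\infty M^{1,1}}$, and then Proposition~\ref{th:apriori} with $s=1$ delivers~\eqref{uniform_bound}. The only cosmetic difference is that the paper obtains $\|u\|_{L_T^\infty H^1}\lesssim_{\langle T\rangle^n}1$ by directly invoking the equivalence of norms~\eqref{equi_ma_cla}, whereas you spell out the equivalent argument via the diamagnetic inequality and the decomposition $\nabla u=\nabla_A u+iAu$.
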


\begin{proof}
Using the conservation of energy we obtain
\begin{equation}\label{tou_nier}
\|A\|_{L_T^{\infty}H^1}\lesssim\|\nabla A\|_{L_T^{\infty}L^2}+\int_0^T\|(\partial_tA)(t,\cdot)\|_{L^2}dt\lesssim\langle T\rangle.
\end{equation}
Moreover, the conservation of charge and energy, combined with the equivalence of norms \eqref{equi_ma_cla} and estimate \eqref{tou_nier} yields
\begin{equation}\label{mou_nier}
\|u\|^2_{L_T^{\infty}H^1}\lesssim\langle\|A\|_{L_T^{\infty}H^1}^m\rangle\|u\|^2_{L_T^{\infty}H_A^1}\lesssim_{\langle T\rangle^n} 1.
\end{equation}
Applying the a priori bounds \eqref{eq:apriori_uno} and \eqref{eq:apriori_due}, together with estimates \eqref{tou_nier} and \eqref{mou_nier}, we deduce
\begin{equation*}
\begin{split}
&\|(u,A)\|_{L_T^2(W^{1/2,6}\times L^{\infty})}+\|(A,\partial_t A)\|_{L_T^{\infty}\Sigma^{7/6}}\\
&\qquad\qquad\lesssim_{\langle T\rangle^n}\langle\|(u,A)\|_{L_T^{\infty}(H^1\times H^1)}\rangle^m\langle\|(A_0,A_1)\|_{\Sigma^{\sigma}}\rangle^m\lesssim_{\langle T\rangle^n} 1,
\end{split}
\end{equation*}
which combined with \eqref{mou_nier} yields \eqref{uniform_bound}.
\end{proof}

Next, we introduce a suitable (higher order) modified energy. An analogous functional has been used in \cite{P-T-V}, where the authors study the growth of high Sobolev norms of solutions to the non-linear Schr\"odinger equation on compact manifolds. Similar ideas can be found also in \cite{Antonelli-Marcati-Zheng}, where the authors prove the stability of weak solutions to a one-dimensional quantum hydrodynamic system.

For any given $\gamma>1$, we define the following modified energy:
$$\mathcal{E}_2(t):=\int_{\R^3}|\partial_t u|^2-(\gamma-1)|u|^{\gamma-1}|\nabla|u||^2-\frac{\gamma-1}{\gamma}|u|^{2\gamma}dx.$$

Given any solution $(u,A)$ to the system \eqref{eq:MS}, for $\gamma\in(1,4)$, it turns out that the modified energy $\mathcal{E}_2(t)$ is equivalent to $\|u\|_{H_{A}^2}^2$, up to lower order terms. Indeed, we can prove the following lemma.
\begin{lemma}
Let $\gamma\in(1,4)$, $\sigma\in[\frac43,3)$. We fix an initial data $(u_0,A_0,A_1)\in M^{2,\sigma}$, and let $(u,A)$ be the maximal solution to \eqref{eq:MS}, with $(u,A,\partial_t A)\in\mathcal{C}([0,T_{max}),M^{2,\sigma})$. Then, for every $t\in(0,T_{max})$ we have the estimate
\begin{equation}\label{sue_nn}
\big|\,\mathcal{E}_2(t)-\|u\|^2_{H_{A}^2}\,\big|\lesssim_{\langle t\rangle^n}\langle\|u\|_{H^2}\rangle^{c(\gamma)},
\end{equation}
where $c(\gamma):=\max\{1,\frac{\gamma-1}{2}\}\in[1,2)$.
\end{lemma}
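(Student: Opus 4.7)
The plan is to substitute the Schr\"odinger equation into $\mathcal{E}_2$ so that the time derivative becomes a spatial quantity, and then to integrate by parts to isolate $\|\Delta_A u\|_{L^2}^2$. From $\partial_t u = i(\Delta_A u - \phi u - |u|^{\gamma-1}u)$ one has
$$
\int_{\R^3}|\partial_t u|^2\,dx = \|\Delta_A u\|_{L^2}^2 - 2\RE\int\overline{\Delta_A u}\,(\phi u + |u|^{\gamma-1}u)\,dx + \|\phi u + |u|^{\gamma-1}u\|_{L^2}^2.
$$
The self-adjointness of $-\Delta_A$ (applicable since $u\in H^2$, $A\in H^\sigma$ with $\sigma\ge 4/3$) combined with the Leibniz rule $\nabla_A(Vu)=(\nabla V)u+V\nabla_A u$ for real-valued $V$, and the pointwise identity $\RE(\overline{\nabla_A u}\cdot u)=|u|\nabla|u|=\tfrac12\nabla|u|^2$, then gives
$$
-2\RE\int\overline{\Delta_A u}(\phi u+|u|^{\gamma-1}u)\,dx = \int|u|^4 + 2\int\phi|\nabla_A u|^2 + 2(\gamma-1)\int|u|^{\gamma-1}|\nabla|u||^2 + 2\int|u|^{\gamma-1}|\nabla_A u|^2,
$$
where the identity $-\Delta\phi=|u|^2$ was used in the $\phi$-piece.

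Combining with $\|u\|_{H_A^2}^2=\|u\|_{L^2}^2+2\|\nabla_A u\|_{L^2}^2+\|\Delta_A u\|_{L^2}^2$, the $\|\Delta_A u\|_{L^2}^2$ contributions cancel, and the coefficient $(\gamma-1)$ in $\mathcal{E}_2$ only partially cancels the $2(\gamma-1)$ produced by the integration by parts. This leaves
$$
\mathcal{E}_2-\|u\|_{H_A^2}^2 = \mathcal{R} - \|u\|_{L^2}^2 - 2\|\nabla_A u\|_{L^2}^2,
$$
where $\mathcal{R}$ is the sum of $\int|u|^4$, $2\int\phi|\nabla_A u|^2$, $(\gamma-1)\int|u|^{\gamma-1}|\nabla|u||^2$, $2\int|u|^{\gamma-1}|\nabla_A u|^2$, $\|\phi u+|u|^{\gamma-1}u\|_{L^2}^2$, and $-\tfrac{\gamma-1}{\gamma}\int|u|^{2\gamma}$. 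It remains to bound $\mathcal{R}$ and the subtracted $H^1$-type terms by $\langle t\rangle^n\langle\|u\|_{H^2}\rangle^{c(\gamma)}$.

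The easy contributions are dispatched by Lemma \ref{ove_pi}, which gives $\|u\|_{L_T^{\infty}H^1}+\|A\|_{L_T^{\infty}H^1}\lesssim_{\langle T\rangle^n}1$. Hardy--Littlewood--Sobolev then yields $\|\phi\|_{L^{\infty}}\lesssim\|u\|_{H^1}^2\lesssim_{\langle T\rangle^n}1$, so $\|u\|_{L^2}^2,\|\nabla_A u\|_{L^2}^2,\int|u|^4,\int\phi|\nabla_A u|^2,\|\phi u\|_{L^2}^2,\int\phi|u|^{\gamma+1}$ are all $\lesssim_{\langle t\rangle^n}1$; and so is $\int|u|^{2\gamma}$ when $\gamma\leq 3$, since then $2\gamma\le 6$.

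The main obstacle is the pair of critical terms $\int|u|^{\gamma-1}|\nabla|u||^2$ and $\int|u|^{\gamma-1}|\nabla_A u|^2$; by the diamagnetic inequality \eqref{eq:diamag} it suffices to bound the latter. I would use H\"older as
$$
\int|u|^{\gamma-1}|\nabla_A u|^2\,dx \leq \|u\|_{L^{3(\gamma-1)}}^{\gamma-1}\,\|\nabla_A u\|_{L^3}^2,
$$
with $\|\nabla_A u\|_{L^3}\lesssim\|u\|_{H^{3/2}}+\|A\|_{L^6}\|u\|_{L^6}$ and the interpolation $\|u\|_{H^{3/2}}^2\lesssim\|u\|_{H^2}\|u\|_{H^1}$, obtaining $\|\nabla_A u\|_{L^3}^2\lesssim_{\langle t\rangle^n}\langle\|u\|_{H^2}\rangle$. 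For $\gamma\in(1,3]$, Sobolev yields $\|u\|_{L^{3(\gamma-1)}}\lesssim\|u\|_{H^1}\lesssim_{\langle t\rangle^n}1$ and the overall bound is $\langle\|u\|_{H^2}\rangle=\langle\|u\|_{H^2}\rangle^{c(\gamma)}$. For $\gamma\in(3,4)$, the embedding $H^{3/2-1/(\gamma-1)}\hookrightarrow L^{3(\gamma-1)}$ combined with interpolation between $H^1$ and $H^2$ gives $\|u\|_{L^{3(\gamma-1)}}^{\gamma-1}\lesssim_{\langle t\rangle^n}\|u\|_{H^2}^{(\gamma-1)/2-1}$, so the critical term is $\lesssim_{\langle t\rangle^n}\|u\|_{H^2}^{(\gamma-1)/2}=\|u\|_{H^2}^{c(\gamma)}$. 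The residual $\int|u|^{2\gamma}$ in this regime is controlled by $\|u\|_{H^2}^{\gamma-3}$ via the same interpolation, and since $\gamma-3<(\gamma-1)/2$ for $\gamma<5$ this is absorbed into $c(\gamma)$. Collecting all the estimates establishes \eqref{sue_nn}.
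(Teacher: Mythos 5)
Your algebraic identity is correct, and it takes a genuinely different route from the paper. The paper never integrates by parts: it writes $\mathcal{E}_2=\|\partial_t u\|_{L^2}^2-R(t)$ with $R(t)$ the two explicit lower-order integrals, bounds $\big|\|\partial_t u\|_{L^2}^2-\|\Delta_A u\|_{L^2}^2\big|\le S(t)\big(2\|\Delta_A u\|_{L^2}+S(t)\big)$ with $S(t)=\|(\phi+|u|^{\gamma-1})u\|_{L^2}$ by the triangle inequality applied to $\partial_t u=i\big(\Delta_Au-(\phi+|u|^{\gamma-1})u\big)$, and then estimates $R$ and $S$ directly, obtaining $R\lesssim_{\langle t\rangle^n}\langle\|u\|_{H^2}\rangle^{(\gamma-1)/2}$ and $S^2\lesssim_{\langle t\rangle^n}\langle\|u\|_{H^2}\rangle^{(\gamma-3)\vee0}$; the exponent $1$ in $c(\gamma)$ comes from the product $S\cdot\|\Delta_Au\|_{L^2}$. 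Your exact expansion of the cross term $-2\RE\int\overline{\Delta_Au}\,(\phi+|u|^{\gamma-1})u$ is a finer computation that explains the coefficient $(\gamma-1)$ in $\mathcal{E}_2$; it costs a few extra terms to control (which you do handle), avoids the crude comparison of $\|\partial_tu\|_{L^2}$ with $\|\Delta_Au\|_{L^2}$, and lands on the same exponent $c(\gamma)=\max\{1,\frac{\gamma-1}{2}\}$.

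There is, however, one step that fails as written. For the critical term you apply H\"older as $\int|u|^{\gamma-1}|\nabla_Au|^2\le\|u\|_{L^{3(\gamma-1)}}^{\gamma-1}\|\nabla_Au\|_{L^3}^2$ and then claim, for $\gamma\in(1,3]$, that $\|u\|_{L^{3(\gamma-1)}}\lesssim\|u\|_{H^1}$ by Sobolev embedding. When $\gamma<\frac53$ one has $3(\gamma-1)<2$, and $H^1(\R^3)\hookrightarrow L^p(\R^3)$ is false for $p<2$ (take $u_\lambda(x)=\lambda^{3/2}u(\lambda x)$ with $\lambda\to0$: the $H^1$ norm stays bounded while $\|u_\lambda\|_{L^p}=\lambda^{3/2-3/p}\|u\|_{L^p}\to\infty$). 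So your bound on the critical pair of terms is unjustified on the range $\gamma\in(1,\frac53)$. The repair is to redistribute the H\"older exponents as the paper does for $R(t)$: write $\int|u|^{\gamma-1}|\nabla_Au|^2\le\|u\|_{L^6}^{\gamma-1}\|\nabla_Au\|_{L^{12/(7-\gamma)}}^2$, noting that $\tfrac{12}{7-\gamma}\in(2,4)$ for all $\gamma\in(1,4)$, and then use Sobolev embedding and Gagliardo--Nirenberg, $\|\nabla u\|_{L^{12/(7-\gamma)}}\lesssim\|u\|_{H^{(3+\gamma)/4}}\lesssim\|u\|_{H^1}^{(5-\gamma)/4}\|u\|_{H^2}^{(\gamma-1)/4}$, together with the uniform $H^1$ bounds of Lemma \ref{ove_pi} to absorb the $Au$ contribution; this gives $\lesssim_{\langle t\rangle^n}\langle\|u\|_{H^2}\rangle^{(\gamma-1)/2}\le\langle\|u\|_{H^2}\rangle^{c(\gamma)}$ uniformly in $\gamma\in(1,4)$. (A minor further caveat: for $\gamma<2$ the pointwise differentiation of $|u|^{\gamma-1}$ in your integration by parts needs the standard chain-rule convention on the set $\{u=0\}$; since all resulting integrands are well defined, this is not a substantive obstruction.)
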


\begin{proof}
Let us set $S(t):=\|(\phi+|u|^{\gamma-1})u\|_{L^2}$, and
$$R(t):=(\gamma-1)\|u^{\gamma-1}|\nabla|u||^2\|_{L^1}+\frac{\gamma-1}{\gamma}\|u^{2\gamma}\|_{L^1},$$
so that $\mathcal{E}_2(t)=\|\partial_tu\|_{L^2}^2-R(t)$. We have
\begin{equation}\label{but_via}
\begin{split}
\big|\,\mathcal{E}_2(t)-\|u\|^2_{H_{A}^2}\big|&\leq\big|\|\partial_t u\|_{L^2}^2-\|\Delta_{A}u\|_{L^2}^2\big|+\|u\|_{L^2}^2+R(t)\\
&\lesssim S(t)\big(\|\Delta_{A}u\|_{L^2}+S(t)\big)+\|u\|_{L^2}^2+R(t).
\end{split}
\end{equation}
Using estimates \eqref{eq:diamag_senza} and \eqref{uniform_bound} we get
\begin{equation}\label{splitR}
\begin{split}
R(t)&\lesssim \|u\|_{L^6}^{\gamma-1}\|\nabla u\|_{L^{12/(7-\gamma)}}^2+\|u\|_{H^1}^{(\gamma+3)\wedge 2\gamma}\|u\|_{H^2}^{(\gamma-3)\vee 0}\\
&\lesssim_{\langle t\rangle^n} \|u\|_{H^2}^{(\gamma-1)/2}+\|u\|_{H^2}^{(\gamma-3)\vee 0}\lesssim \langle\|u\|_{H^2}\rangle^{(\gamma-1)/2}.
\end{split}
\end{equation}
Analogously, we can prove
\begin{equation}\label{cos_vib}
S(t)^2\lesssim \|u\|_{H^1}^3+ \|u\|_{H^1}^{(\gamma+3)\wedge 2\gamma}\|u\|_{H^2}^{(\gamma-3)\vee 0}\lesssim_{\langle t\rangle^n} \langle\|u\|_{H^2}\rangle^{(\gamma-3)\vee 0},
\end{equation}
where we used the Hardy-Littlewood-Sobolev and H\"older inequalities to estimate the non-linear term involving the electric potential $\phi$. Moreover, the equivalence of norms \eqref{equi_ma_cla} and estimate \eqref{uniform_bound} yield
\begin{equation}\label{eq:llp}
\|u\|_{H_A^2}\lesssim\langle\|A\|_{H^1}^m\rangle\|u(t)\|_{H^2}\lesssim_{\langle t\rangle^n}\|u\|_{H^2}.
\end{equation}
Combining \eqref{but_via}-\eqref{eq:llp} we deduce estimate \eqref{sue_nn}.
\end{proof}

Next result shows that, when computing the time derivative of the modified energy, we have a gain in spatial derivatives with respect to the standard energy method. Here we need the assumption $\gamma>2$, which guarantees the existence of the (weak) derivative of $\mathcal{E}_2$.

\begin{lemma}
Let $\gamma\in(2,4)$, $\sigma\in[\frac43,3)$. We fix an initial data $(u_0,A_0,A_1)\in M^{2,\sigma}$, and let $(u,A)$ be the maximal solution to \eqref{eq:MS}, with $(u,A,\partial_t A)\in\mathcal{C}([0,T_{max}),M^{2,\sigma})$. Then, for every $T\in(0,T_{max})$,
\begin{equation}\label{eq:deri_modi}
\begin{split}
&\frac{d}{dt}\mathcal{E}_2=\int_{\R^3}4\RE(\partial_t A\cdot\nabla_A u\cdot\overline{\partial_t u})+(\gamma-1)(\gamma-3)|u|^{\gamma-2}\partial_t|u|\cdot|\nabla|u||^2\\
&\;\;+2(\gamma-1)|u|^{\gamma-2}\partial_t|u|\cdot|\nabla_A u|^2+2(\gamma-1)\phi|u|^{\gamma}\partial_t|u|+2\IM(u\,\partial_t\phi\cdot\overline{\partial_t u})dx,
\end{split}
\end{equation}
as an identity between functions in $W^{1,1}(0,T)$.
\end{lemma}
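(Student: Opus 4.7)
The plan is to split the time derivative of $\mathcal{E}_2$ into three contributions, one for each summand in its definition, and to work formally at the level of smooth approximations, recovering the $W^{1,1}(0,T)$ identity by density. The hypothesis $\gamma>2$ is crucial here: it ensures $|u|^{\gamma-2}$ is locally bounded (since $H^2\hookrightarrow L^\infty$ in $\R^3$), so every integrand on the right hand side lies in $L^1([0,T]\times\R^3)$ thanks to the regularity provided by Theorem~\ref{th:LWP} together with the bound \eqref{uniform_bound}.

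For the kinetic part $\frac{d}{dt}\|\partial_t u\|_{L^2}^2=2\RE\int\overline{\partial_t u}\,\partial_{tt}u\,dx$, I would differentiate the Schr\"odinger equation in time. The key algebraic ingredient is the commutator identity $\partial_t(\Delta_A u)=\Delta_A(\partial_t u)-2i\,\partial_t A\cdot\nabla_A u$, which relies on the Coulomb gauge $\diver A=0$. Substituting and pairing against $\overline{\partial_t u}$: the $\Delta_A\partial_t u$ term vanishes by self-adjointness; the commutator term produces $4\RE\int\partial_t A\cdot\nabla_A u\,\overline{\partial_t u}\,dx$; the expansion $\partial_t(\phi u)=(\partial_t\phi)u+\phi\partial_t u$ yields $2\IM\int u\,\partial_t\phi\,\overline{\partial_t u}\,dx$ (the diagonal $\phi|\partial_t u|^2$ being real); and $\partial_t(|u|^{\gamma-1}u)=(\gamma-1)|u|^{\gamma-2}\partial_t|u|\cdot u+|u|^{\gamma-1}\partial_t u$ leaves the residual $R:=2(\gamma-1)\int|u|^{\gamma-2}\partial_t|u|\cdot\IM(u\overline{\partial_t u})\,dx$.

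To convert $R$ into the remaining three summands, I would feed the equation back: multiplying $i\partial_t u=-\Delta_A u+\phi u+|u|^{\gamma-1}u$ by $\bar u$ and taking imaginary parts yields $\IM(u\overline{\partial_t u})=\phi|u|^2+|u|^{\gamma+1}-\RE(\bar u\Delta_A u)$, while the pointwise hydrodynamic identity
\[
\RE(\bar u\Delta_A u)=|u|\Delta|u|+|\nabla|u||^2-|\nabla_A u|^2
\]
follows from $|u|\nabla|u|=\RE(\bar u\nabla u)$ (take divergence and compare with $\RE\diver(\bar u\nabla u)=|\nabla u|^2+\RE(\bar u\Delta u)$) combined with the expansion $\Delta_A u=\Delta u-2iA\cdot\nabla u-|A|^2u$. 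For the remaining two terms of $\mathcal{E}_2$, setting $v:=|u|$ and $v_t:=\partial_t|u|$, integration by parts gives
\[
\tfrac{d}{dt}\int v^{\gamma-1}|\nabla v|^2\,dx=-(\gamma-1)\int v^{\gamma-2}v_t|\nabla v|^2\,dx-2\int v^{\gamma-1}v_t\Delta v\,dx,
\]
and $\frac{d}{dt}\int v^{2\gamma}\,dx=2\gamma\int v^{2\gamma-1}v_t\,dx$. Assembling all three contributions with the signs prescribed by $\mathcal{E}_2$, three cancellations occur: the $\int v^{2\gamma-1}v_t$ pieces annihilate, the $\int v^{\gamma-1}v_t\Delta v$ pieces annihilate, and the net coefficient of $\int v^{\gamma-2}v_t|\nabla v|^2$ collapses to $(\gamma-1)^2-2(\gamma-1)=(\gamma-1)(\gamma-3)$, which is exactly the statement.

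The principal obstacle is the low regularity of $v=|u|$ on the set $\{u=0\}$, where $\Delta v$ is only distributional. Since $\gamma>2$ the factor $|u|^{\gamma-2}$ is bounded, so no singular integrands arise; to justify the hydrodynamic identity and the integrations by parts, I would carry out the computations on the regularized modulus $v_\epsilon:=(|u|^2+\epsilon^2)^{1/2}$, for which $\nabla v_\epsilon=\RE(\bar u\nabla u)/v_\epsilon$ is smooth wherever $u$ is, and let $\epsilon\downarrow 0$ by dominated convergence, using $\|u\|_{L^\infty_{t,x}}+\|\nabla|u|\|_{L^\infty_tL^2_x}+\|\partial_t u\|_{L^\infty_tL^2_x}<\infty$ from $u\in\mathcal{C}([0,T],H^2)\cap\mathcal{C}^1([0,T],L^2)$. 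The resulting identity then lies in $W^{1,1}(0,T)$ as required.
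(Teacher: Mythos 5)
Your proposal is correct and follows essentially the same route as the paper: differentiate the kinetic term in time, use the Coulomb-gauge commutator identity for $[\Delta_A,\partial_t]$, substitute the equation back into the residual $|u|^{\gamma-1}$-term, and invoke the identity $\RE(\bar u\Delta_A u)=|u|\Delta|u|+|\nabla|u||^2-|\nabla_A u|^2$, which is the paper's $2\RE(\bar u\Delta_A u)=\Delta(|u|^2)-2|\nabla_A u|^2$ in disguise. The only difference is bookkeeping: the paper pairs $\partial_t(|u|^{\gamma-1})$ against $\Delta(|u|^2)$ and integrates by parts once, never isolating $\Delta|u|$, so it sidesteps the modulus-regularization you (rightly) introduce to make sense of $\int|u|^{\gamma-1}\partial_t|u|\,\Delta|u|$ before that term cancels.
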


\begin{proof}
We assume that the solution $(u,A)$ is regular enough so that $\mathcal{E}_2\in\mathcal{C}^1(0,T)$, in which case all the computations below are justified. The general case follows by a standard density argument, owing to the a priori estimates \eqref{eq:apriori_due} and \eqref{eq:perdi}.

We start with the following computation.
\begin{equation*}
\begin{split}
\frac{d}{dt}\|\partial_t u\|_{L^2}^2&=2\RE(\partial_t^2 u,\partial_t u)=2\RE(\partial_t(-\Delta_Au+\phi u+|u|^{\gamma-1}u),i\partial_t u)\\
&=2\RE(-\Delta_A\partial_t u,i\partial_t u)+2\RE([\Delta_A,\partial_t]u,i\partial_t u)\\
&\quad+2\RE(\partial_t (\phi u),i\partial_t u)+2\RE(\partial_t (|u|^{\gamma-1}u),i\partial_t u)\\
&=4\RE(\partial_t A\cdot\nabla_A u,\partial_t u)+2\RE(u\partial_t \phi,i\partial_t u)+2\RE(\partial_t (|u|^{\gamma-1})u,i\partial_t u).
\end{split}
\end{equation*}
Next, we observe that
\begin{equation*}
\begin{split}
&2\RE(\partial_t (|u|^{\gamma-1})u,i\partial_t u)=2\RE(\partial_t (|u|^{\gamma-1})u,-\Delta_Au+\phi u+|u|^{\gamma-1}u)\\
&=2\RE(\partial_t (|u|^{\gamma-1})u,-\Delta_Au)+2(\gamma-1)\int_{\R^3}\phi|u|^{\gamma}\partial_t|u|dx+\frac{\gamma-1}{\gamma}\frac{d}{dt}\int_{\R^3}|u|^{2\gamma}dx.
\end{split}
\end{equation*}
Finally, using the identity $2\RE(\bar{u}\Delta_Au)=\Delta(|u|^2)-2|\nabla_A u|^2$ we get
\begin{equation*}
\begin{split}
2\RE(\partial_t (|u|^{\gamma-1})u,-\Delta_A u)&=-(\partial_t|u|^{\gamma-1},2\RE(\bar{u}\Delta_Au))\\
&=-(\partial_t|u|^{\gamma-1},\Delta(|u|^2))+2(\partial_t|u|^{\gamma-1},|\nabla_Au|^2)\\
&=(\gamma-1)\frac{d}{dt}\int_{\R^3}|u|^{\gamma-1}|\nabla|u||^2dx\\
&\quad+(\gamma-1)(\gamma-3)\int_{\R^3}|u|^{\gamma-2}\partial_t|u||\nabla|u||^2dx\\
&\quad+2(\gamma-1)\int_{\R^3}|u|^{\gamma-2}\partial_t|u||\nabla_Au|^2dx,
\end{split}
\end{equation*}
which concludes the proof.
\end{proof}

Now we are ready to prove our main Theorem.

\begin{proof}[Proof of Theorem \ref{th:main}]
The local well-posedness of the Cauchy problem associated to $\eqref{eq:MS}$ has been proved in Theorem \ref{th:LWP}. We are left to show that for every given initial data $(u_0,A_0,A_1)$, the corresponding solution $(u,A)$ can be extended globally in time (i.e.~$T_{max}=\infty$), and that it satisfies the polynomial bound \eqref{pol_bound} when $\gamma\in(2,3)$, and the exponential bound \eqref{exp_bound} when $\gamma=3$.

Preliminarily, observe that 
\begin{equation}\label{eq:rico}
\begin{split}
\|\partial_t u\|_{L_T^{\infty}L^2}&\lesssim\|\Delta u\|_{L_T^{\infty}L^{2}}+\|A\nabla u\|_{L_T^{\infty}L^{2}}+\|(|A|^2+\phi+|u|^{\gamma-1})u\|_{L_T^{\infty}L^{2}}\\
&\lesssim_{\langle T\rangle^n}\langle\|u\|_{L_T^{\infty}H^2}\rangle+\|A\|_{L_T^{\infty}L^3}\|\nabla u\|_{L_T^{\infty}L^6}+\|A^2\|_{L_T^{\infty}L^2}\|u\|_{L_T^{\infty}L^{\infty}}\\
&\lesssim \langle\|A\|_{L_T^{\infty}H^1}\rangle^2\langle\|u\|_{L_T^{\infty}H^2}\rangle\lesssim_{\langle T\rangle^n}\langle\|u\|_{L_T^{\infty}H^2}\rangle,
\end{split}
\end{equation}
where we used \eqref{cos_vib}, Sobolev embedding, and the bound $\|A\|_{L_T^{\infty}H^1}\lesssim_{\langle T\rangle^n} 1$ provided by \eqref{uniform_bound}. Similarly, using \eqref{cos_vib} and Sobolev embedding we obtain
\begin{equation*}
\begin{split}
\|\partial_t u\|_{L_T^{\infty}H^{-1}}&\lesssim\|\Delta u\|_{L_T^{\infty}H^{-1}}+\|2iA\nabla u+|A|^2u\|_{L_T^{\infty}L^{6/5}}+\|(\phi+|u|^{\gamma-1})u\|_{L_T^{\infty}H^{-1}}\\
&\lesssim_{\langle T\rangle^n} \langle\|u\|_{L_T^{\infty}H^1}\rangle+\|A\|_{L_T^{\infty}L^3}\|\nabla u\|_{L_T^{\infty}L^2}+\|A^2\|_{L_T^{\infty}L^{3/2}}\|u\|_{L_T^{\infty}L^6}\\
&\lesssim\langle\|A\|_{L_T^{\infty}H^1}\rangle^2\langle\|u\|_{L_T^{\infty}H^1}\rangle,
\end{split}
\end{equation*}
which in view of the bound \eqref{uniform_bound} yields
\begin{equation}\label{eq:cori}
\|\partial_t u\|_{L_T^{\infty}H^{-1}}\lesssim_{\langle T\rangle^n} 1.
\end{equation}

Let us also state a couple useful estimates for the electric potential $\phi$. First, applying Sobolev embedding and the Hardy-Littlewood-Sobolev inequality, and using the bound \ref{uniform_bound} we get
\begin{equation}\label{phi_uno}
\|\phi\|_{L_T^{\infty}L^{\infty}}\lesssim \|(-\Delta)^{-1}\rho\|_{L_T^{\infty}W^{3/2,6}}\lesssim \|\rho\|_{L_T^{\infty}L^3}\lesssim \|u\|_{L_T^{\infty}H^1}^2\lesssim_{\langle T\rangle^n} 1.
\end{equation}
In addition, using Sobolev embedding together with estimates \eqref{eq:cori} and \ref{uniform_bound} we obtain
\begin{equation}\label{phi_due}
\begin{split}
\|\partial_t\phi\|_{L_T^{1}L^{3}}&\lesssim_{\langle T\rangle^n} \|(-\Delta)^{-1}\partial_t\rho\|_{L_T^{\infty}\dot{H}^{1/2}}\lesssim \|\partial_t\rho\|_{L_T^{\infty}H^{-3/2}}\\
&\lesssim\|u\|_{L_T^{\infty}H^{3/2}}\|\partial_t u\|_{L_T^{\infty}H^{-1}}\lesssim_{\langle T\rangle^n} \|u\|_{L_T^{\infty}H^2}^{1/2}.
\end{split}
\end{equation}
 
Let us start our analysis by considering the case $\gamma\in(1,2]$. Arguing as in the proof of \eqref{le:linear_propagator} (see estimate \eqref{da_ric}), and using the bound \eqref{uniform_bound} we deduce that for every $T\in(0,T_{max})$
\begin{equation}\label{car_ci}
\|u\|_{L_T^{\infty}H^2}\lesssim_{\langle T\rangle^n}\|\mathcal{N}(u)\|_{W_T^{1,1}L^2}+\langle \|\partial_t A\cdot\nabla_A u\|_{L_T^1L^2}\rangle.
\end{equation}
Using Sobolev embedding, the a priori bound \eqref{eq:apriori_due}, and estimate \eqref{uniform_bound} we get
\begin{equation}\label{fisti_uno}
\begin{split}
\|\partial_t A\cdot\nabla_A &u\|_{L_T^1L^2}\lesssim_{\langle T\rangle^n}\|\partial_t A\|_{L_T^{\infty}L^{9/4}}\|\nabla_A u\|_{L_T^2L^{18}}\\
&\lesssim\|\partial_t A\|_{L_T^{\infty}H^{1/6}}(\|u\|_{L_T^2W^{4/3,6}}+\|A\|_{L_T^2L^{\infty}}\|u\|_{L_T^{\infty}L^{18}})\\
&\lesssim_{\langle T\rangle^n}\|\partial_t A\|_{L_T^{\infty}H^{1/6}}\langle\|A\|_{L_T^2L^{\infty}}\rangle\|u\|_{L_T^{\infty}H^{11/6}}\lesssim_{\langle T\rangle^n}\|u\|_{L_T^{\infty}H^2}^{5/6}.
\end{split}
\end{equation}
Combining \eqref{car_ci}, \eqref{fisti_uno}, and using estimates \eqref{cos_vib}, \eqref{eq:rico}, \eqref{phi_uno}, \eqref{phi_due} and \eqref{uniform_bound} we obtain
\begin{equation}\label{schet}
\begin{split}
\|u\|_{L_T^{\infty}H^2}&\lesssim_{\langle T\rangle^n}\langle \|u\|_{L_T^{\infty}H^2}^{5/6}\rangle+\|\mathcal{N}(u)\|_{L_T^1L^2}+\|\partial_t\mathcal{N}(u)\|_{L_T^1L^2}\\
&\lesssim_{\langle T\rangle^n}\langle \|u\|_{L_T^{\infty}H^2}^{5/6}\rangle +  \|\partial_t\phi\|_{L_T^{1}L^3} \|u\|_{L_T^{\infty}L^6}\\
&\qquad\quad +\|\phi\|_{L_T^{\infty}L^{\infty}} \|\partial_t u\|_{L_T^{1}L^2} + \||u|^{\gamma-1}\partial_t u\|_{L_T^1L^2}\\
&\lesssim_{\langle T\rangle^n}\langle \|u\|_{L_T^{\infty}H^2}^{5/6}\rangle+\|u\|_{L_T^{1}H^2} + \||u|^{\gamma-1}\partial_t u\|_{L_T^1L^2}.
\end{split}
\end{equation}
Using \eqref{schet}, \eqref{eq:rico}, and the Brezis-Gallouet-Wainger inequality \eqref{brez} we get
\begin{equation}\label{gr_bg}
\begin{split}
\|u\|_{L_T^{\infty}H^2}&\lesssim_{\langle T\rangle^n}\langle \|u\|_{L_T^{\infty}H^2}^{5/6}\rangle + \int_0^T \|u\|_{L_t^{\infty}H^2}+\|u(t,\cdot)\|_{L^{\infty}}^{\gamma-1}\|\partial_t u\|_{L_t^{\infty}L^2}dt\\
&\lesssim_{\langle T\rangle^n}\langle \|u\|_{L_T^{\infty}H^2}^{5/6}\rangle + \int_0^T \langle \|u(t,\cdot)\|_{L^{\infty}}^{\gamma-1}\rangle\langle\|u\|_{L_t^{\infty}H^2}\rangle dt\\
&\lesssim_{\langle T\rangle^n}\langle \|u\|_{L_T^{\infty}H^2}^{5/6}\rangle+ \int_0^T\langle\|u(t,\cdot)\|_{W^{1/2,6}}^{\gamma-1}\rangle\times\\
&\quad\qquad\times\big\langle\ln^{5(\gamma-1)/6}(e+\|u\|_{L_t^{\infty}H^2})\big\rangle\langle\|u\|_{L_t^{\infty}H^2}\rangle dt.
\end{split}
\end{equation}
Since $\gamma-1\leq 2$, we can use the bound \eqref{uniform_bound} to obtain
\begin{equation}\label{cat_ar}
\int_0^T\langle\|u(t,\cdot)\|_{W^{1/2,6}}^{\gamma-1}\rangle dt\lesssim_{\langle T\rangle^n}\langle\|u\|_{L_T^2W^{1/2,6}}^{\gamma-1}\rangle\lesssim_{\langle T\rangle^n} 1.
\end{equation}
Combining \eqref{gr_bg} and \eqref{cat_ar}, and observing that $\frac56(\gamma-1)<1$, a Gr\"onwall-type argument (more precisely, the Bihari-LaSalle inequality \cite{Bihari}) yields
$$\|u\|_{L_T^{\infty}H^2}\lesssim\exp\exp\,(T^m).$$ 
Owing to Proposition \ref{th:perdi} and estimate \eqref{uniform_bound}, we also get 
$$\|(A,\partial_t A)\|_{L_T^{\infty}\Sigma^{\sigma}}\lesssim\exp\exp\,(T^m).$$
Therefore, it follows by the blow-up alternative that the solution $(u,A)$ can be extended globally.

Consider now the case $\gamma\in(2,3]$. Using estimate \eqref{sue_nn}, and integrating in time the identity \eqref{eq:deri_modi}, we deduce that for every $T\in(0,T_{max})$
\begin{equation}\label{eq:car_li}
\begin{split}
\|u(T&)\|_{H_{A(T)}^2}^2-\|u(0)\|_{H_{A(0)}^2}^2\lesssim_{\langle T\rangle^n}\mathcal{E}_2(T)-\mathcal{E}_2(0)+\langle\|u\|_{L_T^{\infty}H^2}\rangle\\
&\lesssim_{\langle T\rangle^n}\|\partial_t A\cdot\nabla_A u\,\partial_t u\|_{L_T^1L^1}+\||u|^{\gamma-2}\partial_t|u||\nabla|u||^2\|_{L_T^1L^1}\\
&\quad\qquad+\||u|^{\gamma-2}\partial_t|u||\nabla_Au|^2\|_{L_T^1L^1}\\
&\quad\qquad+\|\phi|u|^{\gamma}\partial_t|u|\|_{L_T^1L^1}+\|u\,\partial_t\phi\,\partial_t u\|_{L_T^1L^1}+\langle\|u\|_{L_T^{\infty}H^2}\rangle.
\end{split}
\end{equation}
Using that $\partial_t|u|\leq|\partial_t u|$ for a.e. $x\in\R^3$, we get from estimates \eqref{eq:car_li} and \eqref{eq:rico}
\begin{equation}\label{eq:car_lib}
\begin{split}
\|u(T&)\|_{H_{A(T)}^2}^2-\|u(0)\|_{H_{A(0)}^2}^2\lesssim_{\langle T\rangle^n}\\
&\Big(\|\partial_t A\cdot\nabla_A u\|_{L_T^1L^2}+\||u|^{\gamma-2}|\nabla|u||^2\|_{L_T^1L^2}+\||u|^{\gamma-2}|\nabla_Au|^2\|_{L_T^1L^2}\\
&\quad+\|\phi|u|^{\gamma}\|_{L_T^1L^2}+\|u\,\partial_t\phi\|_{L_T^1L^2}\Big)\langle\|u\|_{L_T^{\infty}H^2}\rangle+\langle\|u\|_{L_T^{\infty}H^2}\rangle\\
&:=(\textrm{I}+\textrm{II}+\textrm{III}+\textrm{IV}+\textrm{V})\langle\|u\|_{L_T^{\infty}H^2}\rangle+\langle\|u\|_{L_T^{\infty}H^2}\rangle.
\end{split}
\end{equation}
The term (I) is estimated by \eqref{fisti_uno}. Let us estimate the terms II -- V.

\textbf{(II)}~Let us fix $\eps=\eps(\gamma):=\frac{3-\gamma}{4}\in[0,\frac12)$. At spatial level, interpolating $W^{3/2-\eps,6}$ with $H^{2-\eps}$, we deduce
\begin{equation}\label{inter_uno}
\|u\|_{W^{3/2,6/(1+4\eps)}}\lesssim\|u\|_{W^{3/2-\eps,6}}^{1-2\eps}\|u\|_{H^{2-\eps}}^{2\eps}.
\end{equation}
Next, an interpolation between $W^{1/2,6}$ and $W^{3/2,6/(1+4\eps)}$, combined with estimate \eqref{inter_uno} yields
\begin{equation}\label{inter_due}
\begin{split}
\|u\|_{W^{1,6/(1+2\eps)}}&\lesssim\|u\|_{W^{1/2,6}}^{1/2}\|u\|_{W^{3/2,6/(1+4\eps)}}^{1/2}\\
&\lesssim\|u\|_{W^{1/2,6}}^{1/2}\|u\|_{W^{3/2-\eps,6}}^{1/2-\eps}\|u\|_{H^{2-\eps}}^{\eps}.
\end{split}
\end{equation}
Using \eqref{inter_due}, H\"older inequality in time, the a priori estimate \eqref{eq:apriori_due}, and the bound \eqref{uniform_bound} we get
\begin{equation}\label{dav_bas}
\begin{split}
\|\nabla u\|^2_{L_T^2L^{6/(1+2\eps)}}&\lesssim\|u\|_{L_T^2W^{1/2,6}}\|u\|_{L_T^2W^{3/2-\eps,6}}^{1-2\eps}\|u\|_{L_T^{\infty}H^{2-\eps}}^{2\eps}\\
&\lesssim_{\langle T\rangle^n}\|u\|_{L_T^2W^{1/2,6}}\|u\|_{L_T^{\infty}H^{2-\eps}}\\
&\lesssim \|u\|_{L_T^2W^{1/2,6}}\|u\|_{L_T^{\infty}H^1}^{\eps}\|u\|_{L_T^{\infty}H^{2}}^{1-\eps}\\
&\lesssim_{\langle T\rangle^n}\|u\|_{L_T^2W^{1/2,6}}\|u\|_{L_T^{\infty}H^{2}}^{1-\eps}.
\end{split}
\end{equation}
Finally, estimate \eqref{dav_bas} and the diamagnetic bound \eqref{eq:diamag_senza} yield
\begin{equation}\label{fisti_due}
\begin{split}
\||u|^{\gamma-2}|\nabla|u||^2\|_{L_T^1L^2}&\lesssim\|u\|_{L_T^{\infty}L^{6}}^{\gamma-2}\|\nabla u\|^2_{L_T^2L^{6/(1+2\eps)}}\\
&\lesssim_{\langle T\rangle^n}\|u\|_{L_T^2W^{1/2,6}}\|u\|_{L_T^{\infty}H^{2}}^{1-\eps}.
\end{split}
\end{equation}

\textbf{(\textrm{III})}~We have
\begin{equation}\label{zta}
\||u|^{\gamma-2}|\nabla_Au|^2\|_{L_T^1L^2}\lesssim\||u|^{\gamma-2}|\nabla u|^2\|_{L_T^1L^2}+\||u|^{\gamma}|A|^2\|_{L_T^1L^2}.
\end{equation}
The first term in the r.h.s.~can be estimated as in \eqref{fisti_due}:
\begin{equation}\label{qufist}
\||u|^{\gamma-2}|\nabla u|^2\|_{L_T^1L^2}\lesssim_{\langle T\rangle^n}\|u\|_{L_T^2W^{1/2,6}}\|u\|_{L_T^{\infty}H^{2}}^{1-\eps}.
\end{equation}
For the second term, the bound \eqref{uniform_bound} yields
\begin{equation}\label{kar}
\||u|^{\gamma}|A|^2\|_{L_T^1L^2}\lesssim \|u\|_{L_T^{\infty}H^1}^{\gamma}\|A\|_{L_T^2L^{\infty}}^2\lesssim_{\langle T\rangle^n} 1.
\end{equation}
Combining \eqref{zta}, \eqref{qufist} and \eqref{kar} we get
\begin{equation}\label{fisti_tre}
\||u|^{\gamma-2}|\nabla_Au|^2\|_{L_T^1L^2}\lesssim_{\langle T\rangle^n}1+\|u\|_{L_T^2W^{1/2,6}}\|u\|_{L_T^{\infty}H^{2}}^{1-\eps}.
\end{equation}

\textbf{(\textrm{IV})}~Using \eqref{phi_uno} and the bound \eqref{uniform_bound} we get
\begin{equation}\label{fisti_quattro}
\|\phi|u|^{\gamma}\|_{L_T^1L^2}\lesssim_{\langle T\rangle^n}\|\phi\|_{L_T^{\infty}L^{\infty}}\|u\|_{L_{T}^{\infty}L^{2\gamma}}^{\gamma}\lesssim_{\langle T\rangle^n}\|u\|_{L_T^{\infty}H^1}^{\gamma}\lesssim_{\langle T\rangle^n} 1.
\end{equation}

\textbf{(V)}~Using \eqref{phi_due} and the bound \eqref{uniform_bound} we obtain
\begin{equation}\label{fisti_cinque}
\begin{split}
\|u\,\partial_t\phi\|_{L_T^1L^2}&\lesssim\|u\|_{L_T^{\infty}L^6}\|\partial_t\phi\|_{L_T^1L^3}\\
&\lesssim_{\langle T\rangle^n}\|u\|_{L_T^{\infty}H^1}\|u\|_{L_T^{\infty}H^2}^{1/2}\lesssim_{\langle T\rangle^n}\|u\|_{L_T^{\infty}H^2}^{1/2}.
\end{split}
\end{equation}

Combining \eqref{eq:car_lib}, \eqref{fisti_uno}, \eqref{fisti_due}, \eqref{fisti_tre},\eqref{fisti_quattro} and \eqref{fisti_cinque} we obtain
$$
\|u\|_{L_T^{\infty}H_{A}^2}^2-\|u(0)\|_{H_{A(0)}^2}^2\lesssim_{\langle T\rangle^n}\langle\|u\|_{L_T^{\infty}H^2}^{11/6}\rangle+\|u\|_{L_T^2W^{1/2,6}}\langle\|u\|_{L_T^{\infty}H^2}^{2-\eps(\gamma)}\rangle.
$$
Using the equivalence of norms \eqref{equi_ma_cla} and the bound \eqref{uniform_bound} we also get
\begin{equation}\label{eq:funda}
\|u\|_{L_T^{\infty}H_{A}^2}^2-\|u(0)\|_{H_{A(0)}^2}^2\lesssim_{\langle T\rangle^n}\langle\|u\|_{L_T^{\infty}H_A^2}^{11/6}\rangle+\|u\|_{L_T^2W^{1/2,6}}\langle\|u\|_{L_T^{\infty}H_A^2}^{2-\eps(\gamma)}\rangle.
\end{equation}
When $\gamma\in(2,3)$, we can use \eqref{eq:funda} and the bound $\|u\|_{L_T^2W^{1/2,6}}\lesssim _{\langle T\rangle^n} 1$ provided by \eqref{uniform_bound}, obtaining
\begin{equation}\label{eq:funda_S}
\|u\|_{L_T^{\infty}H_{A}^2}^2-\|u(0)\|_{H_{A(0)}^2}^2\lesssim_{\langle T\rangle^n}\langle\|u\|_{L_T^{\infty}H_A^2}\rangle^{\max\{2-\eps(\gamma),11/6\}}.
\end{equation}
Since $\eps(\gamma)=\frac{3-\gamma}{4}>0$, we deduce the polynomial bound
\begin{equation*}
\|u\|_{L_T^{\infty}H_A^2}\lesssim\langle T\rangle^{\frac{m}{\eps(\gamma)}}\approx\langle T\rangle^{\frac{m}{3-\gamma}},
\end{equation*}
which in view of  the equivalence of norms \eqref{equi_ma_cla} and estimate \eqref{uniform_bound} also yields
\begin{equation}\label{eq:pol_u}
\|u\|_{L_T^{\infty}H^2}\lesssim\langle T\rangle^{\frac{m}{3-\gamma}}.
\end{equation}
Moreover, using Proposition \ref{th:perdi} and estimates \eqref{uniform_bound} and \eqref{eq:pol_u} we deduce
\begin{equation}\label{eq:pol_A}
\|(A,\partial_t A)\|_{L_T^{\infty}\Sigma^{\sigma}}\lesssim\langle T\rangle^{\frac{m}{3-\gamma}}.
\end{equation}
Combining estimates \eqref{eq:pol_u} and \eqref{eq:pol_A} we obtain
$$\|(u,A,\partial_t A)\|_{L_T^{\infty}M^{2,\sigma}}\lesssim \langle T\rangle^{\frac{N}{3-\gamma}},$$
for a suitable positive constant $N$ independent of $\gamma$ (note indeed that all the bounds we used to derive \eqref{eq:pol_u} and \eqref{eq:pol_A}, including in particular the a priori estimates \eqref{eq:apriori_uno}-\eqref{eq:apriori_due}, are uniform for $\gamma\in(2,3)$). Using the blow-up alternative, we conclude that the solution $(u,A)$ can be extended globally in time, and that it satisfies the polynomial bound \eqref{pol_bound}. 

When $\gamma=3$ we have $\eps(3)=0$, and estimate \eqref{eq:funda_S} does not give any information. Instead, we rely on the following argument. Let us fix an arbitrary $T\in(0,T_{max})$, and rewrite estimate \eqref{eq:funda} for two generic times $T_0,T_1$, with $0\leq T_0<T_1\leq T$:
\begin{equation}\label{eq:funda_gen}
\begin{split}
\|u\|_{L_{T_1}^{\infty}H_{A}^2}^2&\leq \|u(T_0)\|_{H_{A(T_0)}^2}^2+C_1\langle T\rangle^{m_1}\Big(\langle\|u\|_{L_{T_1}^{\infty}H_A^2}^{11/6}\rangle\\
&+\|u\|_{L^2(T_0,T_1);W^{1/2,6})}\langle\|u\|_{L_{T_1}^{\infty}H_A^2}^2\rangle\Big),
\end{split}
\end{equation}
for some positive constants $C_1, m_1$. Moreover, consider the continuous, increasing function $t\to \|u\|^2_{L_t^2W^{1/2,6}}$, $t\in[0,T]$, which in view of \eqref{uniform_bound} is also bounded by $C_2\langle T\rangle^{m_2}$, for some positive constants $C_2,m_2$. Let us set $\delta:=(2C_1\langle T\rangle^{m_1})^{-2}$, and observe that we can find $k\in\N$, with $k\leq 1+\delta^{-1}C_2\langle T\rangle^{m_2}\lesssim\langle T\rangle^m$, and a sequence of times $t_0=0,t_1,\ldots,t_{k-1}, t_k=T$, such that
$$\|u\|_{L^2((t_j,t_{j+1}),W^{1/2,6})}^2=\|u\|_{L^2_{t_{j+1}}W^{1/2,6}}^2-\|u\|_{L^2_{t_{j}}W^{1/2,6}}^2\leq\delta,\quad j=0,\ldots ,k-1,$$
which in view of the expression for $\delta$ yields
\begin{equation}\label{eq:part_bound}
C_1\langle T\rangle^{m_1}\|u\|_{L^2((t_j,t_{j+1}),W^{1/2,6})}\leq \frac12,\quad j=0,\ldots ,k-1.
\end{equation}
Applying \eqref{eq:funda_gen} with $(T_0,T_1)=(t_j,t_{j+1})$, and owing to \eqref{eq:part_bound}, we obtain
\begin{equation*}
\frac{1}{2}\|u\|_{L^{\infty}_{t_{j+1}}H_A^2}^2\leq 1+\|u(t_j)\|_{H_{A(t_j)}^2}^2+C_1\langle T\rangle^{m_1}\langle\|u\|_{L_{t_{j+1}}^{\infty}H_A^2}^{11/6}\rangle,\quad j=0,\ldots ,k-1,
\end{equation*}
which in particular implies
\begin{equation}\label{iterative_double}
\|u\|_{L_{t_{j+1}}^{\infty}H_A^2}\leq C_3\big(\|u(t_j)\|_{H_{A(t_j)}^2}+\langle T\rangle^{6m_1}\big),\quad j=0,\ldots ,k-1,
\end{equation}
for a suitable constant $C_3>1$. Applying iteratively estimate \eqref{iterative_double}, and using that $k\lesssim \langle T\rangle^{m}$, we eventually obtain an exponential bound
$$\|u\|_{L_T^{\infty}H_A^2}\lesssim \exp(T^m).$$
The bound above, together with the equivalence of norms \eqref{equi_ma_cla} and estimate \eqref{uniform_bound} also yields
\begin{equation}\label{eq:exp_u}
\|u\|_{L_T^{\infty}H^2}\lesssim \exp(T^m).
\end{equation}
Moreover, using Proposition \ref{th:perdi} and estimates \eqref{uniform_bound} and \eqref{eq:exp_u} we deduce
\begin{equation}\label{eq:exp_A}
\|(A,\partial_t A)\|_{L_T^{\infty}\Sigma^{\sigma}}\lesssim \exp(T^m).
\end{equation}
As before, combining estimates \eqref{eq:exp_u} and \eqref{eq:exp_A}, and using the blow-up alternative, we conclude that the solution $(u,A)$ can be extended globally in time, and that it satisfies the exponential bound \eqref{exp_bound}. 

The proof is complete.
\end{proof}

\section{Well-posedness in other gauges}\label{sec:lor}
In this final Section we briefly discuss the possibility of studying the nonlinear Maxwell-Schr\"odinger system in the Lorenz gauge, namely by imposing $\d_t\phi+\diver A=0$. More precisely, we briefly present and discuss the main mathematical differences with our study that uses the Coulomb gauge.
\newline
First of all, in the expansion of the magnetic Laplacian $\Delta_A$ the scalar potential given by $-i\diver A=i\d_t\phi$ does not vanish aymore. This extra term can be handled with the magnetic Koch-Tzvetkov estimates, see Lemma \ref{le:quattordici}, by means of a bootstrap argument, similar to the one used to control the power type nonlinearity in the proof of Proposition \ref{th:apriori}. Furthermore, the electric potential now satisfies a wave equation with $\rho=|u|^2$ as a source term, namely $\square\phi=\rho$. This means that, even if $\phi$ is not given anymore by the Poisson equation, it is sufficiently regular, since the source term does not involve derivatives of the order parameter $u$. Finally, while in the Coulomb gauge we only need to control the solenoidal part of the magnetic potential $\mathbb PA$, here we need to control also $\mathbb QA$. On the other hand, notice that in the Lorenz gauge we have $\mathbb QA=(-\Delta)^{-1}\nabla\d_t\phi$, so that the irrotational part of the magnetic potential can be controlled by exploiting the estimates on the electric potential $\phi$.
\newline
In this sense we see that the nonlinear Maxwell-Schr\"odinger system in the Lorenz gauge can be treated in a similar manner, since the system \eqref{eq:MS} retains the main mathematical difficulties.

\def\cprime{$'$}

\end{document}